\theoremstyle{plain}
\newcounter{thm}[section]
\newtheorem{theorem}[thm]{Theorem}
\newtheorem{lemma}[thm]{Lemma}
\newtheorem{proposition}[thm]{Proposition}
\newcounter{ass}[section]
\newtheorem{assumption}[ass]{Assumption}
\theoremstyle{definition}
\newcounter{exer}[section]
\newcounter{exam}[section]
\newtheorem{example}[exam]{Example}
\newcounter{defi}[section]
\newtheorem{definition}[defi]{Definition}
\newcounter{rem}[section]
\newtheorem{remark}[rem]{Remark}
\numberwithin{equation}{section}
\title{Bang--bang trajectories with  a double switching time:
  sufficient strong local optimality conditions}
\author{Laura Poggiolini $\qquad$ Marco
  Spadini\thanks{Dipartimento di Matematica Applicata, Universit\`a
    degli Studi di Firenze, via di Santa Marta, 3, I-50139 Firenze
    ({\tt laura.poggiolini@math.unifi.it} and {\tt
      marco.spadini@math.unifi.it})
}}
\date{}
\begin{document}
\maketitle


\begin{abstract}
  This paper gives sufficient conditions for a class of bang-bang
  extremals with multiple switches to be locally optimal in the strong
  topology.  The conditions are the natural generalizations of the
  ones considered in \cite{ASZ02b, Pog06} and \cite{PS04}. We require
  both the {\em strict bang-bang Legendre condition}, and the second
  order conditions for the finite dimensional problem obtained by
  moving the switching times of the reference trajectory.
\end{abstract}

%
%

\pagestyle{myheadings}
\thispagestyle{plain}
\markboth{Laura Poggiolini and Marco Spadini}{Strong local optimality}

\section{Introduction}
We consider a Mayer problem, where the control functions are bounded
and enter linearly in the dynamics.
\begin{subequations}
  \label{eq:minipb}
  \begin{align}
    \text{minimize }\; & C(\xi, u) := c_0(\xi(0)) + c_f(\xi(T)) \;
    \label{eq:1} \\
    \text{ subject to } \; & \dot\xi(t) = f_0 (\xi(t)) + \dsum_{s=1}^m u_s f_s(\xi(t)) \label{eq:2} \\
    & \xi(0) \in N_0 \, , \quad \xi(T) \in N_f \label{eq:3}  \\
    & u = (u_1, \ldots, u_m) \in L^\infty([0, T],
    [-1,1]^m).\label{eq:4}
  \end{align}
\end{subequations}
Here $T>0$ is given, the state space is a $n$-dimensional manifold $M$,
$N_0$ and $N_f$ are smooth sub-manifolds of $M$. The vector fields
$f_0, f_1,\dots ,f_m$ and the functions $c_0 $, $c_f$ are $C^2$ on
$M$, $N_0$ and $N_f$, respectively.

We aim at giving second order sufficient conditions for a
{\em reference bang-bang extremal couple $(\wh\xi, \wh u)$ to be a
  local optimizer in the strong topology}; the strong topology being
the one induced by $C([0,T], M)$ on the set of admissible
trajectories, regardless of any distance of the associated controls.
Therefore, optimality is with respect to neighboring trajectories,
independently of the values of the associated controls.
In particular, if the extremal is abnormal, we prove that $\wh\xi$ is
isolated among admissible trajectories.

We recall that a control $\wh u$ (a trajectory $\wh \xi$) is bang-bang
if there is a finite number of switching times $\,0 < \hat t_1 < \dots
< \hat t_r < T\,$ such that each component $\wh u_i$ of the reference
control $\wh u$ is constantly either $-1$ or $1$ on each interval
$(\hat t_k,\hat t_{k+1})$.  A switching time $\hat t_k$ is called {\em
  simple} if only one control component changes value at $\hat t_k$,
while it is called {\em multiple} if at least two control components
change value.

%
Second order conditions for the optimality of a bang-bang extremal
with simple switches only are given in \cite{ASZ02b, MO03, Pog06,
  PS04} and references therein, while in \cite{Sar97} the author gives
sufficient conditions, in the case of the minimum time problem, for
$L^1$-local optimality - an intermediate condition between strong
and local optimality - of a bang-bang extremal having both simple
and multiple switches with the extra assumption that the Lie brackets
of the switching vector fields is annihilated by the adjoint covector.

All the above cited papers require regularity assumptions on the
switches (see the subsequent Assumptions \ref{ass:1}, \ref{ass:2} and
\ref{ass:3} which are the natural strengthening of necessary
conditions) and the positivity of a suitable second variation. 

Here we consider the problem of strong local optimality in the case of
a Mayer problem, when at most one double switch occurs, but there are
finitely many simple ones and no commutativity assumptions on the involved
vector fields.  More precisely we extend the conditions in
\cite{ASZ02b, Pog06, PS04} by requiring the sufficient second order
conditions for the finite dimensional sub-problems that are obtained
by allowing the switching times to move. The addition of a double switch 
is not a trivial extension of the known single-switch cases. In fact, as
explained in Section 2.2, any perturbation of the switching time (of 
a double switch) of the components of $\hat u$ generically creates
two simple switches, that is it a \emph{bang} arc is generated. On
the contrary, the small perturbations of a single switch do not change 
the structure of the reference control.

We believe that the techniques employed here could be extended to the
more general case when there are more than one double switch. However,
such an extension may not be straightforward as the technical and
notational complexities grow quickly with the number of double switches.

Preliminary results were
given in \cite{PS06c}, where the authors exploit a study case and in 
\cite{PSp08} that deals with a Bolza problem in the
so-called {\em non-degenerate case}. Also stability analysis under
parameter perturbations for this kind of bang-bang extremals was
studied in \cite{FPS09}.

We point out that, while in
the case of simple switches the only variables are the switching
times, each time a double switch occurs one has to consider the two
possible combinations of the switching controls. The investigation of
the invertibility of the involved Lipschitz continuous, piecewise
$C^1$ operators has been done via some topological methods described
in the Appendix, or via Clarke's implicit function theorem (see
\cite[Thm 7.1.1.]{Cla83}) in some particular degenerate case.

The paper is organized as follows: Section 2.1 introduces the notation 
and the regularity hypotheses that are assumed through the paper. In
Section 2.2, where our main result Theorem \ref{thm:main} is stated, 
we introduce a finite dimensional subproblem of \eqref{eq:minipb} and its 
``second variations'' (indeed this subproblem is $C^{1,1}$ but not $C^2$
so that the classical ``second variation'' is not well defined). The 
essence of the paper will be to show that the sufficient conditions for 
the optimality of an extremum of this subproblem are actually sufficient 
also for the optimality of the reference pair $(\hat\xi,\hat u)$ in 
problem \eqref{eq:minipb}. In Section 3 we briefly describe the Hamiltonian 
methods the proof is based upon. Section 4 contains the maximized 
Halmiltonian of the control system and its flow. In Section 5, we write 
the ``second variations'' of the finite-dimensional subproblem and study 
their sign on appropriate spaces. Section 6 is the heart of the paper and 
constitutes its more original contribution; here  we prove that the the 
projection onto a neighborhood of the graph of $\hat\xi$ in $\R\times M$ 
of the maximized flow defined in Section 4 is invertible (which is
necessary for our Hamiltonian methods to work). Section 7
contains the conclusion of the proof of  Theorem \ref{thm:main}. In the
Appendix we treat from an abstract viewpoint the problem, raised in
Section 6, of local invertibility of a piecewise $C^1$ function.

\section{The result}

The result is based on some regularity assumption on the vector fields
associated to the problem and on a second order condition for a finite
dimensional sub-problem. 
The regularity Assumptions \ref{ass:2} and \ref{ass:3} are natural,
since we look for sufficient conditions. In fact Pontryagin Maximum
Principle yields the necessity of the same inequalities but in weak
form.

\subsection{Notation and regularity}
\label{sec:notreg}

We assume we are given an admissible reference couple $\big( \wh\xi,
\wh u \big)$ satisfying Pontryagin Maximum Principle (PMP) with
adjoint covector $\wh\lambda$ and that the reference control $\wh u$
is bang-bang with switching times $\wh t_1, \ldots, \wh t_r$ such
that only two kinds of switchings appear:
\begin{itemize}
\item $\wh t_i$ is a {\em simple switching time} i.e.~only one of the
  control components $\wh u_1$, \ldots, $\wh u_m$ switches at time
  $\wh t_i$;
\item $\wh t_i$ is a {\em double switching time} i.e.~exactly two of
  the control components $\wh u_1$, \ldots, $\wh u_m$ switch at time
  $\wh t_i$.
\end{itemize}
We assume that there is just one double switching time, which we
denote by $\hat\tau $. Without loss of generality we may assume that
the control components switching at time $\hat\tau$ are $\wh u_1$ and
$\wh u_2$ and that they both switch from the value $-1$ to the value
$+1$, i.e.
\[
\displaystyle\lim_{t \to \hat\tau-}\wh u_\nu = -1 \, \quad
\displaystyle\lim_{t \to \hat\tau+}\wh u_\nu = 1 \, \quad \nu=1, 2.
\]
In the interval $(0, \hat\tau)$, $J_0$ simple switches occur (if no
simple switch occurs in $(0 , \hat\tau)$, then $J_0 = 0$), and $J_1$
simple switches occur in the interval $(\hat\tau, T)$ (if no simple
switch occurs in $(\hat\tau, T)$, then $J_1 = 0$). We denote the
simple switching times occurring before the double one by
$\hat\theta_{0j}$, $j=1, \ldots, J_0$, and by $\hat\theta_{1j}$, $j=1,
\ldots, J_1$ the simple switching times occurring afterwards. In order
to simplify the notation, we also define $\hat\theta_{00} := 0$,
$\hat\theta_{0, J_0 + 1} := \hat\theta_{10} := \hat\tau$,
$\hat\theta_{1, J_1 + 1} := T$, i.e.~we have
\begin{equation*}
  \hat\theta_{00} := 0 < \hat\theta_{01} < \ldots 
  < \hat\theta_{0 J_0} < \hat\tau := \hat\theta_{0, J_0 + 1}
  := \hat\theta_{10} < \hat\theta_{11} 
  < \ldots < \hat\theta_{1 J_1} < T := \hat\theta_{1, J_1 + 1}.
\end{equation*}
   \begin{figure}[h!]
    \centering
      \begin{pspicture}(0,-1)(10.5,0) \psline{->}(0,0)(10.5,0)
        \psline{-}(0,-.2)(0,.2) \uput[d](0,-0.25){$0$}
        \psline{-}(.5,-0.2)(.5,0.2)
        \uput[d](.5,-0.25){$\wh\theta_{01}$}
        \uput[d](1.75,-0.25){$\dots$} \psline{-}(3,-0.2)(3,0.2)
        \uput[d](3,-0.25){$\wh\theta_{0J_0}$}
        \psline{-}(4,-0.2)(4,0.2)
        \uput[d](4,-0.25){$\wh\tau$}
        \psline{-}(5.5,-0.2)(5.5,0.2)
        \uput[d](5.53,-0.25){$\wh\theta_{10}$}
        \uput[d](7.25,-0.25){$\dots$} \psline{-}(9,-0.2)(9,0.2)
        \uput[d](9,-0.25){$\wh\theta_{1J_1}$}
        \psline{-}(10,-0.2)(10,0.2) \uput[d](10,-0.25){$T$}
      \end{pspicture}
\caption{The sequence of switching times}
   \end{figure}
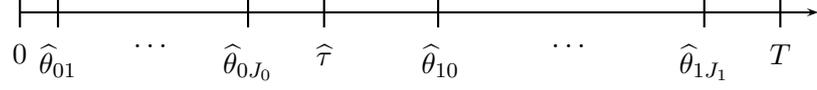

We shall use some basic tools and notation from differential
geometry. For any sub-manifold $N$ of $M$, and any $x \in N$, $T_xN$
and $T^*_xN$ denote the tangent space to $N$ at $x$ and the cotangent
space to $N$ at $x$, respectively while $T^*N$ denotes the cotangent
bundle.  For any $w \in T^*_xM$ and any $\dx \in T_xM$,
$\scal{w}{\dx}$ denotes the duality product between a form and a
tangent vector.

$\pi\colon T^*M \to M$ denotes the canonical projection from the
tangent bundle onto the base manifold $M$. In coordinates $\ell := (p,
x)$:
\[
\pi \colon \ell = (p, x) \in T^*M \mapsto x \in M.
\]
Throughout the paper, for any vector field $f \colon x \in M \mapsto
f(x) \in T_x M$, we shall denote the associated Hamiltonian obtained
by lifting $f$ to $T^*M$ by the corresponding capital letter, i.e.
\[
F \colon \ell \in T^*M \mapsto \scal{\ell}{f(\pi\ell)}\in \R,
\]
and $\vF$ will denote the Hamiltonian vector field associated to $F$.
In particular for any $s =0, 1, \ldots, m$ $F_s(\ell) :=
\scal{\ell}{f_s(\pi\ell)}$ is the Hamiltonian associated to the drift
($s=0$) and to the controlled vector fields of system \eqref{eq:2}.

If $f, g \colon x \in M \mapsto f(x) \in T_x M$, are differentiable
vector fields, we denote their Lie bracket as $[f, g]$:
\[ [f, g](x) := \uD  g(x) \, f(x) - \uD  f(x) \, g(x)
\]
The canonical symplectic two-form between two Hamiltonian vector
fields $\vF{}$ and $\vG{}$ at a
point $\ell$ is denoted as $\dueforma{\vF{}}{\vG{}}(\ell)$. In
coordinates $\ell := (p, x)$:
\[
\dueforma{\vF{}}{\vG{}}(\ell) := -\scal{p\uD  g(x)}{f(x)} +
\scal{p\uD  f(x)}{g(x)} .
\]
For any $m$-tuple $u = (u_1, \ldots, u_m) \in \R^m$ let us denote the
control-dependent Hamiltonian
\[
h_u\colon\ell \in T^*M \mapsto \scal{\ell}{f_0(\pi\ell) +
  \dsum_{s=1}^m u_s f_s(\pi\ell)} \in \R.
\]
Let $\wh f_t$ and $\wh F_t$ be the reference vector field and the
reference Hamiltonian, respectively:
\[
\wh f_t (x) := f_0(x) + \dsum_{s=1}^m \wh u_s(t) f_s(x) \, , \quad \wh
F_t (\ell) := \scal{\ell}{\wh f_t(\pi\ell)} = h_{\wh u(t)} (\ell)
\]
and let
\[
H(\ell) := \max \left\{ h_u(\ell ) \colon u \in [-1, 1]^m \right\}
\]
be the maximized Hamiltonian of the control system.  Also, let $\wh
x_0 := \wh\xi(0)$, $\wh x_d := \wh\xi(\hat\tau)$ and $\wh x_f :=
\wh\xi(T)$.

The reference flow, that is the flow associated to $\wh f_t$, is
defined on the whole interval $[0, T]$ at least in a neighborhood of
$\wh x_0$. We denote it as
\[
\wh S \colon (t, x) \mapsto \wh S_t(x).
\]
Thus, in our situation PMP reads as follows:\\
{\em There exist $p_0 \in \{0, 1\}$ and an absolutely continuous
  function $\wh\lambda\colon [0, T] \to T^*M$ such that}
\begin{align}
  & (p_0, \wh\lambda(0)) \neq (0,0) \displaybreak[0] \\
  & \pi\wh\lambda(t) = \wh\xi(t) \qquad \forall t \in [0, T] \notag
  \displaybreak[0] \\
  \qquad
  &\dot{\wh\lambda}(t) = \overrightarrow{\wh F}_t(\wh\lambda(t)) \qquad \qo t \in [0, T], \notag  \displaybreak[0] \\
  & \wh\lambda(0)\vert_{T_{\wh x_0} N_0} = p_0\ud c_0(\wh x_0), \qquad
  \wh\lambda(T)\vert_{T_{\wh x_f} N_f} = -  p_0\ud c_f(\wh x_f)  \label{eq:trasv} \displaybreak[0] \\
  & \wh F_t (\wh\lambda(t)) = H(\wh\lambda(t)) \quad \qo t \in [0, T].
  \label{eq:maxi}
\end{align}
We shall denote $\lo := \wh\lambda(0)$ and $\lf := \wh\lambda(T)$.

Maximality condition \eqref{eq:maxi} implies $\wh u_s(t)
F_s(\wh\lambda(t)) = \wh u_s(t) \scal{\wh\lambda(t)}{f_s(\wh\xi(t))}
\geq 0$ for any $t\in [0, T]$ and any $s=1, \ldots, m$.  We assume the
following regularity condition holds:
\begin{assumption}[Regularity] \label{ass:1} Let $s \in \{1, \ldots, m\}$. If
  $t$ is not a switching time for the control component $\wh u_s$,
  then
  \begin{equation}
    \label{eq:reg}
    u_s(t) F_s(\wh\lambda(t))  =  \wh u_s(t) \scal{\wh\lambda(t)}{f_s(\wh\xi(t))} > 0.
  \end{equation}
\end{assumption}
In terms of the switching functions $\sigma_s \colon t \in [0, T]
\mapsto F_s \circ \wh\lambda(t) \in \R$, $s = 1, \dots, m$ Assumption
\ref{ass:1} means $ \hat u_s(t) = \sgn{\sigma_s(t)} $ whenever $t$ is
not a switching time of the reference control component $\hat u_s$.

Notice that Assumption \ref{ass:1} implies that
$\operatorname{argmax}\{h_u(\wh\lambda(t))\colon u \in [-1, 1]^m \} =
\wh u(t)$ for any $t$ that is not a switching time.

Let
\begin{equation*}
  k_{ij} := \wh f_t \vert_{(\wh \theta_{ij}, \hat\theta_{i, j+1}) } ,
  \quad j =0, \ldots, J_i , \ i=0,1,
\end{equation*}
be the restrictions of $\wh f_t$ to each of the time intervals where
the reference control $\wh u$ is constant and let $K_{ij}(\ell)
\dfrac{}{}:= \scal{\ell}{k_{ij}(\pi\ell)}$ be the associated
Hamiltonian.  Then, from maximality condition \eqref{eq:maxi} we get
\[
\left. \dfrac{\ud}{\ud t}\left( K_{i j} - K_{i, j-1}
  \right)\circ\wh\lambda(t)\right\vert_{t = \hat\theta_{ij}} \geq 0
\]
for any $i = 0, 1, \quad j = 1, \ldots, J_i$, i.e.~if $\hat u_{s(ij)}$
is the control component switching at time $\hat\theta_{ij}$ and
$\Delta_{ij} \in \{-2, 2 \}$ is its jump, then
\begin{equation*}
  \left. \ddt \Delta_{ij}\sigma_{s(ij)}(t)\right\vert_{t = \hat\theta_{ij}}  \geq 0
\end{equation*}
We assume that the strong inequality holds at each simple switching
time $\hat\theta_{ij}$:
\begin{assumption} \label{ass:2}
  \begin{equation}\label{eq:leg1a}
    \left. \dfrac{\ud}{\ud t}\left( 
        K_{ij} - K_{i, j-1}
      \right)\circ\wh\lambda(t)\right\vert_{t = \hat\theta_{ij}} > 0 
    \qquad i = 0,1, \quad j = 1, \ldots, J_i .
  \end{equation}
\end{assumption}
Assumption \ref{ass:2} is known as the {\sc Strong bang-bang Legendre
  condition for simple switching times}.

In geometric terms Assumption \ref{ass:2} means that at time $t =
\hat\theta_{ij}$ the trajectory $t \mapsto \widehat\lambda(t)$ crosses
transversally the hypersurface of $T^*M$ defined by $K_{ij} = K_{i,
  j-1}$, i.e.~by the zero level set of $F_{s(ij)}$.

\begin{figure}[h!]
 \centering
\begin{pspicture}(0,0)(7,4.5)
 \psbezier(0,3.5)(1,3.2)(1.8,3)(3,2.5)
 \psbezier(3,2.5)(4.2,2)(4.4,1)(4.5,0.5)
 \uput[d](4.5,0.5){\small $K_{ij}=K_{i,j-1}$} 
 \psbezier[linecolor=gray,linewidth=2pt](2,0)(2.2,1)(2.5,2)(3,2.5)
  \psline{->}(3,2.5)(4.5,4)
  \uput[l](4.6,4.25){\small $\vK_{i,j-1}\big(\wh\lambda(\wh\theta_{ij})\big)$}
 \psbezier[linecolor=gray,linewidth=2pt](3,2.5)(4,3.1)(5,3.2)(6.5,3)
  \uput[dr](5,3){\small $\wh\lambda(t)$}
  \psline{->}(3,2.5)(5,3.7)
  \uput[r](5,3.7){\small $\vK_{ij}\big(\wh\lambda(\wh\theta_{ij})\big)$}
  \psdot(3,2.5)
  \uput[l](2.9,2.4){\small $\wh\lambda(\wh\theta_{ij})$}
\end{pspicture}
\caption{Behaviour at a simple switching time}
\end{figure}
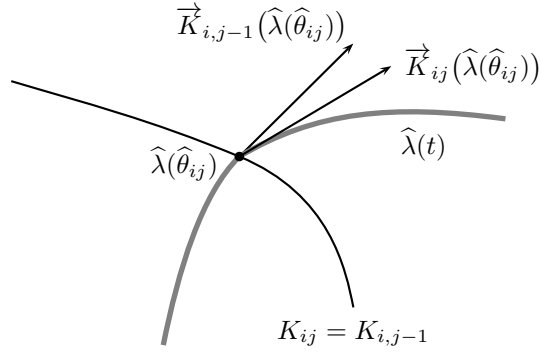

As already said, without any loss of generality we can assume that the
double switching time involves the first two components, $\hat u_1$
and $\hat u_2$ of the reference control $\hat u$ and that they both
switch from $-1$ to $+1$, so that
\[
k_{10} = k_{0J_0} + 2 f_1 + 2f_2.
\]
Define the new vector fields
\[
k_\nu := k_{0J_0} + 2 f_\nu \, , \quad \nu = 1, 2,
\]
with associated Hamiltonians $K_\nu(\ell) := \scal{\ell}{ k_\nu
  (\pi\ell)}$.  Then, from maximality condition \eqref{eq:maxi} we get
\begin{equation*}
  \begin{split}
    & \left.\dfrac{\ud}{\ud t} 2 \, \sigma_\nu (t)\right\vert_{t =
      \hat\tau - } 
    \!\!\!\! = \left.\dfrac{\ud}{\ud t} 2 \, F_\nu
      \circ\wh\lambda(t)\right\vert_{t = \hat\tau - } \!\!\!\! =
    \left.\dfrac{\ud}{\ud t}\left( K_\nu -
        K_{0J_0}\right)\circ\wh\lambda(t)\right\vert_{t = \hat\tau - }
    \!\!\!\! \geq 0, \\
    & \left.\dfrac{\ud}{\ud t} 2 \, \sigma_\nu (t)\right\vert_{t =
      \hat\tau + } 
    \!\!\!\! = \left.\dfrac{\ud}{\ud t} 2 \, F_\nu
      \circ\wh\lambda(t)\right\vert_{t = \hat\tau + } \!\!\!\! = \left.
      \dfrac{\ud}{\ud t}\left( K_{1 0} - K_\nu
      \right)\circ\wh\lambda(t) \right\vert_{t = \hat\tau + } \!\!\!\!
    \geq 0,
  \end{split} \qquad \nu =1, 2 .
\end{equation*}
We assume that the strict inequalities hold:
\begin{assumption}\label{ass:3}
  \begin{equation}\label{eq:leg2a}
    \begin{split}
      & \left.\dfrac{\ud}{\ud t}\left( K_\nu -
          K_{0J_0}\right)\circ\wh\lambda(t)\right\vert_{t = \hat\tau -
      } \!\!\!\! > 0, \qquad \left.  \dfrac{\ud}{\ud t}\left( K_{1 0}
          - K_\nu \right)\circ\wh\lambda(t) \right\vert_{t = \hat\tau
        + } \!\!\!\! > 0,
    \end{split} \quad \nu =1, 2.
  \end{equation}
\end{assumption}
Assumption \ref{ass:3} means that at time $\hat\tau$ the flow arrives
the hypersurfaces $F_1 = 0$ and $F_2 = 0$  with transversal velocity
$\vK_{0J_0}$ and leaves with velocity $\vK_{10}$ which is again
transversal to both the hypersurfaces.
We shall call Assumption \ref{ass:3} the {\sc Strong bang-bang
  Legendre condition for double switching times}.

\begin{figure}[h!]
 \centering
\begin{pspicture}(0,0.5)(8,6.4)
 \pscurve(0,0.5)(1,2)(2.5,3.8)(6,5.7)(7.5,6)
 \pscurve(4.5,0.5)(4,1)(2.5,3.8)(1.8,5.5)(1.7,6.2)
 \psbezier[linecolor=gray,linewidth=2pt](0,6)(1,5.3)(1.5,4.8)(2.5,3.8)
 \psbezier[linecolor=gray,linewidth=2pt](2.5,3.8)(4,4)(5,4.4)(6.5,5)
 \psdot[linewidth=2pt](2.5,3.8)
 \psline{->}(2.5,3.8)(4,2.3)
 \psline{->}(2.5,3.8)(4.5,4.06)
 \uput[r](4.1,1){\small $F_1=0$}
 \uput[r](0.5,1){\small $F_2=0$}
 \uput[l](2.4,3.8){\small $\wh\lambda(\wh\tau)$}
 \uput[r](6.5,5){\small $\wh\lambda(t)$}
 \uput[dr](4,2.3){\small $\vK_{0J_0}\big(\wh\lambda(\wh\tau)\big)$}
 \uput[dr](4.5,4.06){\small $\vK_{10}\big(\wh\lambda(\wh\tau)\big)$}
\end{pspicture}
\caption{Behaviour at the double switching time}
\end{figure}
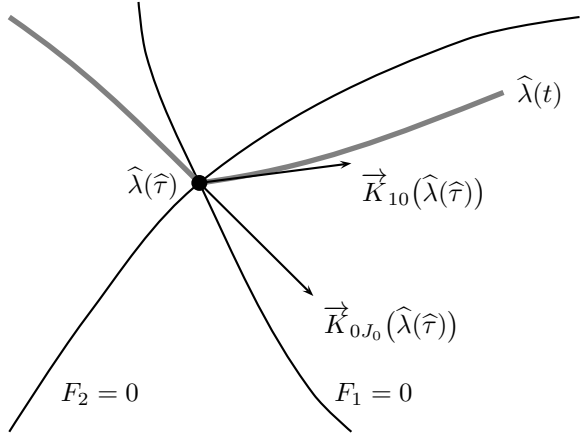

Equivalently, conditions \eqref{eq:leg1a} and \eqref{eq:leg2a} can be
expressed in terms of the Lie brackets of vector fields or in terms of
the canonical symplectic structure $\dueforma{\cdot}{\cdot}$ on
$T^*M$:
\begin{proposition}
  Assumption \ref{ass:2} is equivalent to
  \begin{equation} \label{eq:leg1b}
    \scal{\wh\lambda(\hat\theta_{ij})}{\left[k_{i, j-1}, k_{ij}
      \right] (\wh\xi(\hat\theta_{ij}))} = \dueforma{\vK_{i,
        j-1}}{\vK_{ij}}(\wh\lambda(\hat\theta_{ij})) > 0
  \end{equation}
  for any $ i = 0,1, \quad j = 1, \ldots, J_i$. \\%
  Assumption \ref{ass:3} is equivalent to
  \begin{equation}\label{eq:leg2b}
    \begin{split}
      & \scal{\wh\lambda(\hat\tau)}{\left[ k_{0J_0}, k_\nu \right]
        (\wh x_d)} =
      \dueforma{\vK_{0J_0}}{\vK_\nu}(\wh\lambda(\hat\tau))  > 0, \\
      & \scal{\wh\lambda(\hat\tau)}{\left[ k_\nu, k_{10} \right] (\wh
        x_d)} = \dueforma{\vK_\nu}{\vK_{10}}(\wh\lambda(\hat\tau)) > 0
    \end{split}
    \quad  \nu= 1,2 . 
  \end{equation}
\end{proposition}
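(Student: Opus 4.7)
The statement is a direct computational equivalence: the derivatives along $\wh\lambda$ in \eqref{eq:leg1a} and \eqref{eq:leg2a} are to be rewritten as Poisson brackets, which the canonical symplectic form and the Lie bracket both represent in the given coordinate formulas. My plan is to carry out the computation on each smooth piece of $\wh\lambda$ and then glue.

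First, I would recall that on the open interval $(\hat\theta_{i,j-1},\hat\theta_{ij})$ the reference Hamiltonian vector field coincides with $\vK_{i,j-1}$, so $\dot{\wh\lambda}(t)=\vK_{i,j-1}(\wh\lambda(t))$ there. For any smooth function $G\colon T^*M\to\R$ this gives
\[
\left.\frac{d}{dt}G\circ\wh\lambda(t)\right|_{t=\hat\theta_{ij}^-}
=\dueforma{\vK_{i,j-1}}{\vG}\bigl(\wh\lambda(\hat\theta_{ij})\bigr).
\]
Applying this to $G=K_{ij}-K_{i,j-1}$ and using skew-symmetry of $\dueforma{\cdot}{\cdot}$ together with $\dueforma{\vK_{i,j-1}}{\vK_{i,j-1}}=0$ yields
\[
\left.\frac{d}{dt}(K_{ij}-K_{i,j-1})\circ\wh\lambda(t)\right|_{t=\hat\theta_{ij}}
=\dueforma{\vK_{i,j-1}}{\vK_{ij}}\bigl(\wh\lambda(\hat\theta_{ij})\bigr).
\]
The same computation from the right, using $\dot{\wh\lambda}=\vK_{ij}(\wh\lambda)$, produces the same value by skew-symmetry, so continuity at the switching time is automatic.

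Next I would match the symplectic form with the Lie bracket. Writing $\ell=(p,x)$ in Darboux coordinates and using the definition of $\dueforma{\cdot}{\cdot}$ from the preamble together with the convention $[f,g]=\uD g\cdot f-\uD f\cdot g$, a direct substitution gives
\[
\dueforma{\vF}{\vG}(\ell)=\scal{\ell}{[f,g](\pi\ell)}
\]
for any two vector fields $f,g$ and their lifts $F,G$. Applied with $f=k_{i,j-1}$, $g=k_{ij}$, this chains with the previous identity to produce \eqref{eq:leg1b}, so Assumption \ref{ass:2} is equivalent to the stated positivity of the Lie bracket / symplectic form.

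For Assumption \ref{ass:3} the argument is repeated twice. On the left of $\hat\tau$ one has $\dot{\wh\lambda}=\vK_{0J_0}(\wh\lambda)$, and the same manipulation applied to $G=K_\nu-K_{0J_0}$ gives
\[
\left.\frac{d}{dt}(K_\nu-K_{0J_0})\circ\wh\lambda(t)\right|_{t=\hat\tau-}
=\dueforma{\vK_{0J_0}}{\vK_\nu}\bigl(\wh\lambda(\hat\tau)\bigr)
=\scal{\wh\lambda(\hat\tau)}{[k_{0J_0},k_\nu](\wh x_d)}.
\]
On the right of $\hat\tau$ one has $\dot{\wh\lambda}=\vK_{10}(\wh\lambda)$, and applying the identity to $G=K_{10}-K_\nu$ yields, after another use of skew-symmetry,
\[
\left.\frac{d}{dt}(K_{10}-K_\nu)\circ\wh\lambda(t)\right|_{t=\hat\tau+}
=\dueforma{\vK_\nu}{\vK_{10}}\bigl(\wh\lambda(\hat\tau)\bigr)
=\scal{\wh\lambda(\hat\tau)}{[k_\nu,k_{10}](\wh x_d)},
\]
which is \eqref{eq:leg2b}. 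There is no real obstacle here; the only delicate point is keeping straight which Hamiltonian flow governs $\wh\lambda$ on each side of a switching time, and making sure the skew-symmetry is used consistently so that signs come out as written. Once that bookkeeping is done, the proposition follows.
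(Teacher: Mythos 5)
The paper states this proposition without proof, so there is no paper argument to compare against; your direct computation is the natural way to fill that gap and it does establish the equivalences, with one caveat about the intermediate signs.

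You assert that a ``direct substitution'' from the preamble yields $\dueforma{\vF}{\vG}(\ell)=\scal{\ell}{[f,g](\pi\ell)}$. Carrying out the substitution literally with the paper's stated conventions, namely $[f,g]=\uD g\,f-\uD f\,g$ and $\dueforma{\vF}{\vG}(\ell)=-\scal{p\uD g}{f}+\scal{p\uD f}{g}$, gives instead
\[
\dueforma{\vF}{\vG}(\ell)=-\scal{\ell}{[f,g](\pi\ell)},
\]
i.e.\ the opposite sign. Likewise, with the Hamiltonian vector field that actually governs $\wh\lambda$ (the one producing the PMP dynamics $\dot x=f$, $\dot p=-p\,\uD f$, so $\vF=(-p\,\uD f,\,f)$ in coordinates), one computes
\[
\frac{d}{dt}G\circ\lambda(t)=dG(\vF)(\lambda(t))=-\scal{p\uD f}{g}+\scal{p\uD g}{f}=-\dueforma{\vF}{\vG}(\lambda(t)),
\]
which is the opposite sign from your first displayed identity $\frac{d}{dt}G\circ\wh\lambda=\dueforma{\vK_{i,j-1}}{\vG}$. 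The two sign flips cancel along your chain, so your end identity
\[
\left.\frac{d}{dt}(K_{ij}-K_{i,j-1})\circ\wh\lambda(t)\right|_{t=\hat\theta_{ij}}
=\scal{\wh\lambda(\hat\theta_{ij})}{[k_{i,j-1},k_{ij}](\wh\xi(\hat\theta_{ij}))}
\]
is correct, and with it both claimed equivalences for Assumptions \ref{ass:2} and \ref{ass:3}. Note that the Proposition's displayed chain of equalities exhibits exactly the same sign relation that you wrote, so the discrepancy originates in the preamble's coordinate formula for $\dueforma{\cdot}{\cdot}$ rather than in the Proposition's conclusion; still, you should be aware that the ``direct substitution'' does not literally produce what you wrote. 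Your observation that the one-sided derivatives from both sides agree, making the derivative in \eqref{eq:leg1a} and \eqref{eq:leg2a} unambiguous, is a worthwhile and correct addition.
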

In what follows we shall also need to reformulate Assumptions
\ref{ass:2} and \ref{ass:3} in terms of the pull-backs along the
reference flow of the vector fields $k_{ij}$ and $k_\nu$. Define
\begin{equation*}
  g_{ij}(x) 
  := \wh S_{\hat\theta_{ij}\, *}^{-1} k_{ij}  \circ \wh
  S_{\hat\theta_{ij}}(x),
  \quad
  h_\nu(x) := \wh S_{\hat\tau\, *}^{-1} k_\nu \circ \wh S_{\hat\tau}(x)
\end{equation*}
and let $G_{ij}$, $H_\nu$ be the associated Hamiltonians. We can
restate Assumptions \ref{ass:2} and \ref{ass:3} as follows:
\begin{proposition}
  Assumption \ref{ass:2} is equivalent to
  \begin{equation} \label{eq:leg1c} \scal{\lo}{\left[g_{i, j-1},
        g_{ij} \right] (\wh x_0)} = \dueforma{\vG_{i,
        j-1}}{\vG_{ij}}(\lo) > 0
  \end{equation}
  for any $ i = 0,1, \quad j = 1, \ldots, J_i$.\\ %
  Assumption \ref{ass:3} is equivalent to
  \begin{equation}\label{eq:leg2c}
    \begin{split}
      & \scal{\lo}{\left[ g_{0J_0}, h_\nu \right] (\wh x_0)} =
      \dueforma{\vG_{0J_0}}{\vH_\nu}(\lo)  > 0, \\
      & \scal{\lo}{\left[ h_\nu, g_{10} \right] (\wh x_0)} =
      \dueforma{\vH_\nu}{\vG_{10}}(\lo) > 0
    \end{split}
    \quad  \nu= 1,2 . 
  \end{equation}
\end{proposition}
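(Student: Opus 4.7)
The plan is to deduce the new formulas from the preceding proposition by simply transporting the data from $\hat\lambda(\hat\theta_{ij})$ (resp.\ $\hat\lambda(\hat\tau)$) back to $\ell_0$ via the reference flow. Three standard facts drive the argument: (i) naturality of the Lie bracket under a diffeomorphism, $[\phi^{*}f,\phi^{*}g]=\phi^{*}[f,g]$; (ii) a vector field is invariant under its own flow, so $(\phi_{\tau}^{k})^{*}k=k$; and (iii) the adjoint equation along the reference extremal gives $\wh\lambda(t)=\lo\cdot \uD\wh S_{t}(\wh x_{0})^{-1}$, i.e.\ the cotangent lift of $\wh S_t$ sends $\lo$ to $\wh\lambda(t)$. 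Together with $\dueforma{\vF}{\vG}(\ell)=\scal{\ell}{[f,g](\pi\ell)}$ (up to sign convention), these will let me translate the inequalities of the preceding proposition into the asserted ones at $\lo$.

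The first step, which is the only slightly subtle one, is to unify the pullback basepoints. On $[\hat\theta_{i,j-1},\hat\theta_{ij}]$ the reference flow coincides with the flow of $k_{i,j-1}$ post-composed with $\wh S_{\hat\theta_{i,j-1}}$, so
\[
\wh S_{\hat\theta_{ij}}=\phi^{k_{i,j-1}}_{\hat\theta_{ij}-\hat\theta_{i,j-1}}\circ\wh S_{\hat\theta_{i,j-1}}.
\]
Using (ii), pullback by the flow of $k_{i,j-1}$ leaves $k_{i,j-1}$ fixed, so
\[
(\wh S_{\hat\theta_{ij}})^{*}k_{i,j-1}=(\wh S_{\hat\theta_{i,j-1}})^{*}k_{i,j-1}=g_{i,j-1}.
\]
Thus both $g_{i,j-1}$ and $g_{ij}$ are pullbacks by the \emph{same} diffeomorphism $\wh S_{\hat\theta_{ij}}$. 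By (i),
\[
[g_{i,j-1},g_{ij}](\wh x_{0})=(\wh S_{\hat\theta_{ij}})^{*}[k_{i,j-1},k_{ij}](\wh x_{0})=\uD\wh S_{\hat\theta_{ij}}(\wh x_{0})^{-1}\,[k_{i,j-1},k_{ij}](\wh\xi(\hat\theta_{ij})).
\]
Pairing with $\lo$ and invoking (iii),
\[
\scal{\lo}{[g_{i,j-1},g_{ij}](\wh x_{0})}=\scal{\wh\lambda(\hat\theta_{ij})}{[k_{i,j-1},k_{ij}](\wh\xi(\hat\theta_{ij}))},
\]
which is precisely the left-hand side of \eqref{eq:leg1b}. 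The symplectic identity $\dueforma{\vG_{i,j-1}}{\vG_{ij}}(\lo)=\scal{\lo}{[g_{i,j-1},g_{ij}](\wh x_{0})}$ follows from the same $\ell\mapsto\langle\ell,[f,g](\pi\ell)\rangle$ formula, or alternatively from the invariance of the canonical two-form under the symplectic lift of $\wh S_t$. Combining with the preceding proposition yields the equivalence of Assumption~\ref{ass:2} with \eqref{eq:leg1c}.

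For the double switching time the argument is strictly analogous. Since $\hat\theta_{10}=\hat\tau$, both $g_{10}=(\wh S_{\hat\tau})^{*}k_{10}$ and $h_{\nu}=(\wh S_{\hat\tau})^{*}k_{\nu}$ are pullbacks by $\wh S_{\hat\tau}$; and exactly as in Step~1, $(\wh S_{\hat\tau})^{*}k_{0J_0}=g_{0J_0}$ because $k_{0J_0}$ is the reference vector field on the interval $(\hat\theta_{0J_0},\hat\tau)$. Applying (i) and (iii) to the brackets $[k_{0J_0},k_{\nu}]$ and $[k_{\nu},k_{10}]$ at $\wh x_{d}=\wh S_{\hat\tau}(\wh x_{0})$ gives
\[
\scal{\lo}{[g_{0J_0},h_{\nu}](\wh x_{0})}=\scal{\wh\lambda(\hat\tau)}{[k_{0J_0},k_{\nu}](\wh x_{d})},\qquad \scal{\lo}{[h_{\nu},g_{10}](\wh x_{0})}=\scal{\wh\lambda(\hat\tau)}{[k_{\nu},k_{10}](\wh x_{d})},
\]
so \eqref{eq:leg2b} translates verbatim into \eqref{eq:leg2c}. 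No real obstacle arises; the only point worth care is that $g_{i,j-1}$ and $g_{ij}$ are defined with \emph{different} flow maps in their definitions, and one must use invariance of a vector field under its own flow to pull both back by the same $\wh S_{\hat\theta_{ij}}$ before invoking naturality of the bracket.
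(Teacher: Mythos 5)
Your proof is correct. The paper states this proposition without proof (it is regarded as an immediate consequence of the preceding proposition together with the definitions of $g_{ij}$ and $h_\nu$), and your argument supplies exactly the intended computation: you correctly isolate the one genuine subtlety — that $g_{i,j-1}$ and $g_{ij}$ are a priori pulled back by \emph{different} flow maps, and that invariance of a vector field under its own flow lets you rewrite $g_{i,j-1}$ as $(\wh S_{\hat\theta_{ij}})^*k_{i,j-1}$ so that naturality of the Lie bracket and the adjoint equation $\wh\lambda(t)=\lo\cdot\uD\wh S_t(\wh x_0)^{-1}$ transport the conditions of the previous proposition back to $\lo$.
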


\subsection{The finite dimensional sub-problem}
\label{sec:findim}

By allowing the switching times of the reference control function to
move we can define a finite dimensional sub-problem of the given one.
In doing so we must distinguish between the simple switching times and
the double switching time. Moving a simple switching time
$\hat\theta_{i j}$ to time $\theta_{i j} := \hat\theta_{i j} +
\delta_{i j}$ amounts to using the values $\left. \wh u
\right\vert_{\left(\hat\theta_{i, j-1}, \hat\theta_{ij}\right)}$ and
$\left. \wh u \right\vert_{\left(\hat\theta_{ij}, \hat\theta_{i,
      j+1}\right)}$ of the reference control in the time intervals
$\big( \hat\theta_{i, j-1}, \theta_{i j}\big)$ and $\big( \theta_{i\,
  j}, \hat\theta_{i, j+1}\big)$, respectively. On the other hand, when
we move the double switching time $\hat\tau$ we change the switching
time of two different components of the reference control and we must
allow for each of them to change its switching time independently of
the other. This means that between the values of $\left. \wh u
\right\vert_{\left(\hat\theta_{0J_0}, \hat\tau\right)}$ and
$\left. \wh u \right\vert_{\left(\hat\tau, \hat\theta_{0 1}\right)}$
we introduce a value of the control which is not assumed by the
reference one - at least in a neighborhood of $\hat\tau$ - and which
may assume two different values according to which component switches
first between the two available ones. Let $\tau_\nu := \hat\tau +
\e_\nu$, $\nu = 1,2$.  We move the switching time of the first control
component $\wh u_1$ from $\hat\tau$ to $\tau_1 := \hat\tau + \e_1$,
and the switching time of the second control component $\wh u_2$ from
$\hat\tau$ to $\tau_2 := \hat\tau + \e_2$.

Inspired by \cite{ASZ02b}, let us introduce $C^2$ functions $\alpha
\colon M \to \R$ and $\beta \colon M \to \R$ such that $\alpha|_{N_0}
= p_0 c_0$, $\ud\alpha(\wh x_0) = \lo$ and $\beta|_{N_f} = p_0 c_f$,
$\ud\beta(\wh x_f) = - \lf$.

Define $\theta_{ij} := \hat\theta_{ij} + \delta_{ij}$, $\; j=1, \ldots
, J_i$, $i=0,1$; $\theta_{0,J_0 +1} := \min\{ \tau_\nu, \; \nu =1, 2
\} $, $\theta_{10} := \max\{ \tau_\nu, \; \nu =1, 2 \} $, $\theta_{00}
:= 0$ and $\theta_{1, J_1 +1} := T$.  We have a finite-dimensional
sub-problem (FP) given by
\begin{subequations}
  \label{eq:sottopb}
  \begin{align}
    \label{eq:subcost}\tag{FPa}
    \text{minimize } \; & \alpha(\xi(0)) + \beta(\xi(T)) \\
    \label{eq:sub}\tag{FPb} \text{subject to }\;
    & \dot\xi(t) =
    \begin{cases}
      k_{0j}(\xi(t)) & t \in (\theta_{0j}, \theta_{0, j+1}) \quad j= 0, \ldots, J_0, \\
      k_\nu(\xi(t)) & t \in (\theta_{0, J_0 +1}, \theta_{10}), \\
      k_{1j}(\xi(t)) & t \in (\theta_{1j}, \theta_{1, j+1}) \quad j=
      0, \ldots, J_1
    \end{cases} \displaybreak[0]\\
    \text{and } \; & \xi(0) \in N_0 , \quad \xi(T) \in
    N_f.\label{eq:ini}\tag{FPc} \displaybreak[0]\\
    \text{where } \; &
    \theta_{00} = 0,  \qquad \theta_{1,J_1 +1 } = T \tag{FPd}  \displaybreak[0]\\
    & \theta_{ij} = \hat\theta_{ij} + \delta_{ij}, \quad i= 0, 1,
    \quad j =1, \ldots, J_i,  \tag{FPe} \displaybreak[0]\\
    & \theta_{0,J_0 +1} := \hat\tau + \min\{ \ep_1, \ \ep_2 \}, 
    \quad \theta_{10} := \hat\tau + \max\{ \ep_1, \ \ep_2 \} \tag{FPf} \\
    \text{and } \; & \begin{cases}
      \nu = 1 \quad & \text{if } \ep_1 \leq \ep_2, \\
      \nu = 2 \quad & \text{if } \ep_2 \leq \ep_1.
    \end{cases} \label{eq:nu}\tag{FPg}
  \end{align}
\end{subequations}

\begin{figure}[ht!]
  \centering
  \begin{pspicture}(-0.8,-1)(10.5,0.5) \psline{->}(0,0)(10.5,0)
    \uput[l](0,0){\scriptsize $(\ep_1\leq\ep_2)$}
    \psline{-}(0,-.2)(0,.2) \uput[u](0,0.25){\small $0$}
    \psline{-}(.5,0)(.5,0.2) \uput[u](.5,0.2 ){\small
      $\wh\theta_{01}$} \psline[linestyle=dotted]{-}(0,0.1)(.5,0.1)
    \psline{-}(.75,0)(.75,-.2) \uput[d](.75,-0.1){\small
      $\theta_{01}$} \psline[linestyle=dotted]{-}(0,-0.1)(.75,-0.1)
    \uput[u](1.75,0.2){$\dots$} \psline{-}(3,0)(3,0.2)
    \uput[d](1.6,-0.1){$\dots$ } \uput[u](3,0.2){\small
      $\wh\theta_{0J_0}$} \psline{-}(2.7,0)(2.7,-0.2)
    \uput[d](2.7,-0.1){\small $\theta_{0J_0}$ } \psline{-}(5,0)(5,0.2)
    \uput[u](5,0.2){\small $\wh\tau$} \psline[linestyle=dashed,
    dash=3mm 1mm]{-}(3,0.1)(5,0.1)
    \psline{-}(4,0)(4,-0.2) \uput[d](4,-0.1){\small $\tau_1$}
    \psline[linestyle=dashed, dash=3mm 1mm]{-}(2.7,-0.1)(4,-0.1)
    \psline[linecolor=gray]{<->}(4,-0.1)(5.7,-0.1)
    \psline{-}(5.7,0)(5.7,-0.2) \uput[d](5.7,-0.1){\small $\tau_2$}
    \uput[d](4.85,-0.45){$f_{0J_0} + 2 f_1$}
    \psline{-}(6.5,0)(6.5,0.2) \uput[u](6.5,0.2){\small
      $\wh\theta_{11}$} \psline[linestyle=dashed, dash=1.5mm
    0.5mm]{-}(5,0.1)(6.5,0.1) \psline{-}(6.8,0)(6.8,-0.2)
    \uput[d](6.8,-0.1){\small $\theta_{11}$} \psline[linestyle=dashed,
    dash=1.5mm 0.5mm]{-}(5.7,-0.1)(6.8,-0.1)
    \uput[u](7.75,0.2){$\dots$} \psline{-}(9,0)(9,0.2)
    \uput[u](9,0.2){\small $\wh\theta_{1J_1}$}
    \psline{-}(8.5,0)(8.5,-0.2) \uput[d](7.65,-0.1){$\dots$}
    \uput[d](8.5, -0.1){\small $\theta_{1J_1}$}
    \psline{-}(10,-0.2)(10,0.2) \uput[u](10,0.2){\small $T$}
    \psline[linestyle=dashed, dash=2mm 0.5mm]{-}(9,0.1)(10,0.1)
    \psline[linestyle=dashed, dash=2mm 0.5mm]{-}(8.5,-0.1)(10,-0.1)
  \end{pspicture}
  \begin{pspicture}(-0.8,-1)(10.5,0.5) \psline{->}(0,0)(10.5,0)
    \uput[l](0,0){\scriptsize $(\ep_2\leq\ep_1)$}
    \psline{-}(0,-.2)(0,.2)
    \psline{-}(.5,0)(.5,0.2)
    \psline[linestyle=dotted]{-}(0,0.1)(.5,0.1)
    \psline{-}(.75,0)(.75,-.2) \uput[d](.75,-0.1){\small
      $\theta_{01}$} \psline[linestyle=dotted]{-}(0,-0.1)(.75,-0.1)
    \psline{-}(3,0)(3,0.2)
    \psline{-}(2.7,0)(2.7,-0.2) \uput[d](1.6,-0.1){$\dots$ }
    \uput[d](2.7,-0.1){\small $\theta_{0J_0}$ } \psline{-}(5,0)(5,0.2)
    \psline[linestyle=dashed, dash=3mm 1mm]{-}(3,0.1)(5,0.1)
    \psline{-}(4,0)(4,-0.2) \uput[d](4,-0.1){\small $\tau_2$}
    \psline[linestyle=dashed, dash=3mm 1mm]{-}(2.7,-0.1)(4,-0.1)
    \psline{-}(5.7,0)(5.7,-0.2) \uput[d](5.7,-0.1){\small $\tau_1$}
    \psline[linecolor=gray]{<->}(4,-0.1)(5.7,-0.1)
    \uput[d](4.85,-0.45){$f_{0J_0} + 2 f_2$}
    \psline{-}(6.5,0)(6.5,0.2)
    \psline[linestyle=dashed, dash=1.5mm 0.5mm]{-}(5,0.1)(6.5,0.1)
    \psline{-}(6.8,0)(6.8,-0.2) \uput[d](6.8,-0.1){\small
      $\theta_{11}$} \psline[linestyle=dashed, dash=1.5mm
    0.5mm]{-}(5.7,-0.1)(6.8,-0.1)
    \psline{-}(9,0)(9,0.2)
    \psline{-}(8.5,0)(8.5,-0.2) \uput[d](7.65,-0.1){$\dots$}
    \uput[d](8.5, -0.1){\small $\theta_{1J_1}$}
    \psline{-}(10,-0.2)(10,0.2) \uput[u](10,0.2){\small $T$}
    \psline[linestyle=dashed, dash=2mm 0.5mm]{-}(9,0.1)(10,0.1)
    \psline[linestyle=dashed, dash=1.5mm 0.5mm]{-}(8.5,-0.1)(10,-0.1)
  \end{pspicture}
  \caption{The different sequences of vector fields in the
    finite-dimensional sub-problem.}
\end{figure}
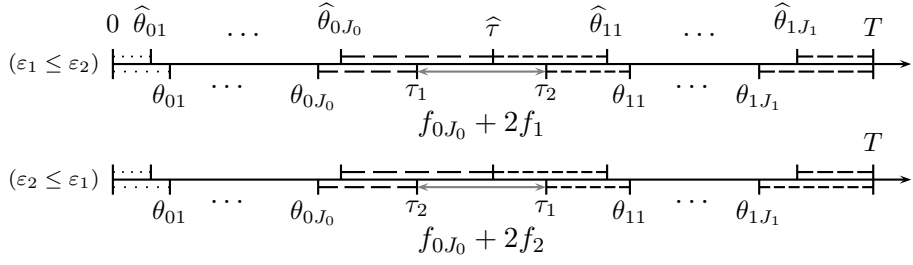

We shall denote the solution, evaluated at time $t$, of \eqref{eq:sub}
emanating from a point $x \in M$ at time $0$, as $S_t(x, \delta,
\e)$. Observe that $S_t(x,0,0) = \wh S_t(x)$.

Notice that the reference control is achieved along $\ep_1 = \ep_2$,
that is the reference flow is attained by (FP) on a point of
non-differentiability of the functions
\[
\theta_{0,J_0 +1} := \hat\tau + \min\{ \ep_1, \ \ep_2 \}, \qquad
\theta_{10} := \hat\tau + \max\{ \ep_1, \ \ep_2 \} .
\]
We are going to prove (see Remark \ref{rem:L} in Section
\ref{sec:lambda}) that despite this lack of differentiability of the
switching times $\theta_{0J_0}$, $\theta_{10}$, (FP) is $C^1$ (indeed
$C^{1,1}$) at $\delta_{ij} = \ep_1 = \ep_2 = 0$

We can thus consider, on the kernel of the first variation of (FP),
its second variation, piece-wisely defined as the second variation of
the restrictions of (FP) to the half-spaces $\{(\delta, \ep) \colon
\ep_1 \leq \ep_2 \}$ and $\{(\delta, \ep) \colon \ep_2 \leq \ep_1
\}$. Because of the structure of (FP), this second variation is
coercive if and only if both restrictions are positive-definite
quadratic forms. In particular any of their convex combinations is
positive-definite on the kernel of the first variation, i.e.~Clarke's
generalized Hessian at $(x, \delta, \ep) = (\wh x_0, 0,0)$ is
positive-definite on that kernel, see Remark \ref{rem:2var} in Section
\ref{sec:lambda}. 

In Section \ref{sec:lambda} we give explicit formulas both for the
first and for the second variations.  We shall ask for such second
variations to be positive definite and prove the following theorem:
\begin{theorem}
  \label{thm:main}
  Let $(\wh\xi, \wh u)$ be a bang-bang regular extremal (in the sense
  of Assumption \ref{ass:1}) for problem \eqref{eq:minipb} with
  associated covector $\wh\lambda$.  Assume all the switching times of
  $(\wh\xi , \wh u)$ but one are simple, while the only non-simple
  switching time is double.

  Assume the strong Legendre conditions, Assumptions \ref{ass:2} and
  \ref{ass:3}, hold.  Assume also that the second variation of problem
  (FP) is positive definite on the kernel of the first variation. Then
  $(\wh\xi, \wh u )$ is a strict strong local optimizer for problem
  \eqref{eq:minipb}. If the extremal is abnormal ($p_0 = 0$), then
  $\wh\xi$ is an isolated admissible trajectory.
\end{theorem}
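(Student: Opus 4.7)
The plan is to follow the Hamiltonian field-of-extremals approach developed in \cite{ASZ02b, PS04, Pog06}, adapted to accommodate the piecewise-$C^1$ structure forced by the double switch. I would lift the problem to $T^*M$ via the boundary functions $\alpha,\beta$, propagate the initial Lagrangian section $\Lambda_0:=\{d\alpha(x) : x\in M\}$ by the concatenation of the Hamiltonian flows of $\vec{K}_{0j}$, of $\vec{K}_\nu$ (with $\nu$ dictated by the sign of $\ep_2-\ep_1$), and of $\vec{K}_{1j}$, with the switching times prescribed by (FPe)--(FPg), and use the resulting ``overmaximized'' flow $\Phi_t^{(\delta,\ep)}$ to construct a value function that serves as a super-solution of the Hamilton--Jacobi inequality in a tubular neighborhood of $\hat\xi$. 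Because of the $\min/\max$ in (FPf), $\Phi_t$ is only Lipschitz continuous and piecewise $C^1$ in the parameter $(\delta,\ep)$, which is the source of all technical difficulties.

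The heart of the argument consists in proving that the projection
\begin{equation*}
  \mathcal{P}: (t,x,\delta,\ep)\longmapsto\bigl(t,\;\pi\,\Phi_t^{(\delta,\ep)}(d\alpha(x))\bigr)
\end{equation*}
admits a Lipschitz local inverse onto a tubular neighborhood of the graph of $\hat\xi$ in $[0,T]\times M$. A direct computation of the first variation of $\mathcal{P}$ at a non-switching time identifies its kernel with the kernel of the first variation of (FP); hence the positive-definiteness of the (piecewise) second variation assumed in Theorem~\ref{thm:main} supplies the correct invertibility condition. To upgrade this into an actual local inverse across the double switch, I would apply Clarke's generalized implicit function theorem \cite[Thm 7.1.1]{Cla83} to the convex combinations of the two smooth Jacobians associated with $\{\ep_1\leq\ep_2\}$ and $\{\ep_2\leq\ep_1\}$, all nondegenerate by coercivity of each branch of the second variation; in the critical configurations where Clarke's hypotheses degenerate, I would invoke the abstract topological invertibility statement established in the Appendix.

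With $\mathcal{P}^{-1}$ in hand, define
\begin{equation*}
  w(t,y):=\alpha(x)+\int_0^t\bigl\langle\Phi_s^{(\delta,\ep)}(d\alpha(x)),\;\hat f_s\circ\pi\,\Phi_s^{(\delta,\ep)}(d\alpha(x))\bigr\rangle\,ds,\quad (x,\delta,\ep):=\mathcal{P}^{-1}(t,y),
\end{equation*}
and apply the Poincar\'e--Cartan integral invariant together with the maximality of $\hat F$ to show that for any admissible pair $(\xi,u)$ with $\xi$ lying in the tubular neighborhood and $\xi(0)\in N_0$, $\xi(T)\in N_f$, one has $C(\xi,u)-C(\hat\xi,\hat u)\geq 0$, with equality forcing $\xi=\hat\xi$ by combining the strict Legendre conditions of Assumptions~\ref{ass:2}--\ref{ass:3} with the strict positivity of the second variation. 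The abnormal case $p_0=0$ collapses the cost to $0$, but the same rigidity chain yields that no admissible trajectory other than $\hat\xi$ fits in the neighborhood, so $\hat\xi$ is isolated. The main obstacle, and the genuine novelty with respect to the single-switch theory, is the invertibility step across the double switch: the failure of differentiability at $\ep_1=\ep_2$ rules out the classical implicit function theorem and forces the two smooth branches to be glued in a way that remains quantitatively invertible uniformly in $t$ straddling $\hat\tau$, which is exactly the content of Section~6 and the Appendix.
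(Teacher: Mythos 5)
Your overall framework coincides with the paper's: lift the problem to $T^*M$ via $\alpha,\beta$, propagate $\Lambda = \{\ud\alpha(x)\}$ along the maximized Hamiltonian flow $\cH_t$, prove that $\id\times\pi\cH$ is a Lipschitz homeomorphism onto a tube around the graph of $\wh\xi$, then use the exactness of $\omega=\cH^*(p\,\ud q - H\,\ud t)$ (equivalently, the Poincar\'e--Cartan argument you describe) to reduce to a finite-dimensional minimization of $\cF$ at $\wh x_f$, with rigidity from strictness. This is Sections~\ref{sec:hamimeth}--\ref{sec:proof} of the paper.

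However, there is a genuine gap in the invertibility step, which is exactly the technical heart of the paper. You describe $\pi\cH_{\hat\tau}$ as having ``two smooth Jacobians associated with $\{\ep_1\le\ep_2\}$ and $\{\ep_2\le\ep_1\}$'' and propose to run Clarke's theorem on their convex combinations, falling back to the topological argument only when Clarke's hypotheses ``degenerate.'' This has two flaws. First, the two-branch picture is the (FP) parametrization by $(\delta,\ep)$, not the $\ell$-parametrization of the maximized flow; when one views $\pi\cH_{\hat\tau}$ as a function of $\ell\in\Lambda$ (as one must to invert $\id\times\pi\cH$), the map splits into \emph{five} polyhedral sectors $M^0,M^{11},M^{21},M^{12},M^{22}$ of $T_{\lo}\Lambda$ with distinct linearizations $L^0,L^{\nu j}$ (Section~\ref{sec:invert}), depending on whether $\tau_1(\ell)$, $\tau_2(\ell)$, $\theta_{10}(\ell)$ each fall before or after $\hat\tau$. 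Second, the division of labor is reversed. The paper needs the topological degree argument (Theorem~\ref{thm:topo}) precisely in the \emph{generic} case $\ud(\tau_1-\tau_2)(\lo)\not\equiv 0$ on $T_{\lo}\Lambda$, because the five pieces do not all agree on a common hyperplane, so Lemma~\ref{lem:lemclin} cannot be used to show that every convex combination in Clarke's generalized Jacobian is nonsingular, and that hypothesis is not verified. Clarke's theorem is used only in the \emph{degenerate} case $\ud(\tau_1-\tau_2)|_{T_{\lo}\Lambda}\equiv 0$, where the sectors collapse (e.g.\ $L^{12}=L^{22}$, or in the fully degenerate subcase all five pieces coincide with $L^0$) and the generalized Jacobian can be shown to consist of invertible matrices by an explicit determinant computation along a single rank-one direction. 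So as written, your proposed Clarke-first strategy would fail in the generic case, which is the case the Appendix was developed to handle.

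A secondary omission: you gesture at ``positive-definiteness of the second variation supplies the invertibility condition,'' but the mechanism in the paper is the filtration $V_{01}\subset\cdots\subset V_{1J_1}=V\subset\cN$ and the identities of Lemmas~\ref{lem:v0jperp}--\ref{lem:Njperp}, which translate membership of candidate kernel vectors into elements of $V_{ij}\cap V_{i,j-1}^{\perp_{J\se_\nu}}$, where the coercivity hypothesis yields an explicit sign contradiction. Spelling this out is what makes the ``supplies the invertibility condition'' step a proof rather than a heuristic.
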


\section{Hamiltonian methods}\label{sec:hamimeth}

The proof will be carried out by means of Hamiltonian methods, which
allow us to reduce the problem to a finite dimensional one defined in
a neighborhood of the final point of the reference trajectory. For a
general introduction to such methods see e.g.~\cite{AS04}. We repeat
here the argument for the sake of completeness.

In Section \ref{sec:maxflow} we prove that the maximized Hamiltonian
of the control system, $H$, is
well defined and Lipschitz continuous on the whole cotangent bundle
$T^*M$. Its Hamiltonian vector field $\vH$ is piecewise smooth in a
neighborhood of the range of $\wh\lambda$ and its flow, which we denote
as
\[
\cH \colon (t, \ell) \in [0, T ] \times T^*M \mapsto \cH_t(\ell) \in
T^*M,
\]
is well defined in a neighborhood of $[0, T] \times
\{\lo\}$  and $\wh\lambda$
is a trajectory of $\vH$: $\dfrac{\ud}{\ud t}{\wh\lambda}(t)=
\vH(\wh\lambda(t))$, i.e.~$\hat\lambda(t) = \cH_t(\lo)$.

In Sections \ref{sec:lambda}-\ref{sec:invert} we prove that there exist a $C^2$
function $\a$ such that $\left. \alpha \right\vert_{N_0} = p_0 c_0$,  
$\ud\alpha(x_0) = \lo $ and enjoying the following
property: the map 
\[ 
\id \times \pi \cH \colon (t, \ell) \in [0, T ]\times \Lambda
\mapsto (t, \pi\cH_t(\ell)) \in [0, T ] \times M
\]
is one--to--one onto a neighborhood of the graph of $\wh\xi$, where 
$
\Lambda := \left\{ \ud\a(x) \colon x \in \cO(x_0) \right\}$.
Indeed the proof of this invertibility is the main core of the paper
and its main novelty.

Under the above conditions the one--form $ \omega :=
  \cH^* ( p \ud q - H \ud t ) $ is exact on $[0, T] \times \Lambda$,
  hence there exists a $C^1$ function
\[
\chi \colon (t, \ell) \in [0, T] \times \Lambda \mapsto
\chi_t(\ell) \in \R
\]
such that $\ud\chi = \omega$. Also it may be shown (see,
e.g.~\cite{ASZ02b}) that $\ud \, (\chi_t \circ (\pi\cH_t)^{-1}) =
\cH_t \circ (\pi\cH_t)^{-1}$ for any $t \in [0, T]$.  Moreover we may
assume $\chi_0 = \alpha \circ \pi$

Observe that $(t, \wh\xi(t)) = (\id \times \pi \cH ) (t, \lo)$ and let
us show how this construction leads to the reduction. Define
\[
\cV := ( \id \times \pi \cH )( [0, T ]\times \Lambda) , \qquad \psi :=
(\id \times \pi\cH)^{-1} \colon \cV \to [0, T ]\times \Lambda
\]
and let $(\xi, u)$ be an admissible pair (i.e.~a pair satisfying
\eqref{eq:2}--\eqref{eq:3}--\eqref{eq:4}) such that the graph of $\xi$
is in $\cV$. We can obtain a closed path $\Gamma$ in $\cV$ with a
concatenation of the following paths:
\begin{itemize}
\item $\Xi \colon t \in [0, T] \mapsto (t, \xi(t)) \in \cV$,
\item $\Phi_T \colon s \in [0, 1] \mapsto (T, \phi_T(s)) \in \cV$,
  where $\phi_T \colon s \in [0, 1] \mapsto \phi_T(s) \in M$ is such
  that $\phi_T(0) = \xi(T)$, $\phi_T(1) = \wh x_f$,
\item $\wh\Xi \colon t \in [0, T] \mapsto (t,\wh \xi(t)) \in \cV$, ran
  backward in time,
\item $\Phi_0 \colon s \in [0, 1] \mapsto (0, \phi_0(s)) \in \cV$,
  where $\phi_0 \colon s \in [0, 1] \mapsto \phi_0(s) \in M$ is such
  that $\phi_0(0) = \wh x_0$, $\phi_0(1) = \xi(0)$.
\end{itemize}
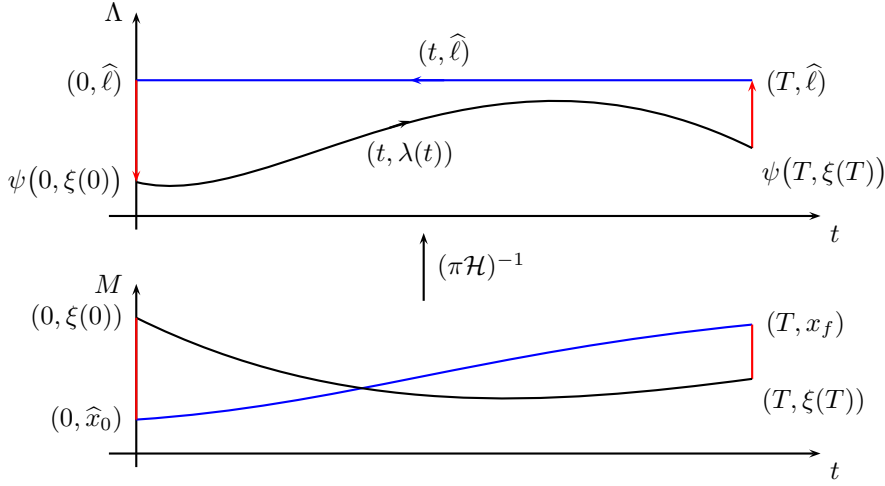
\begin{figure}[h!]
  \centering
 \begin{pspicture}(-1,4.5)(10,-1.8) \psset{unit=0.9cm}
      \psline{->}(0,1.8)(0,5) \psline{->}(-0.4,2)(10,2)
      \uput[l](0,5){\small$\Lambda$} \uput[dr](10,2){\small$t$}
 
      \uput[l](0,4){\small$(0, \lref)$} 
      \uput[l](0,2.5){\small$\psi\big(0,\xi(0)\big)$}

      \psline[linecolor=blue]{-}(0, 4)(9, 4)
      \psline[linecolor=blue]{<-}(4, 4)(4.5, 4)
 
      \uput[r](9, 4){\small$(T, \lref)$}
      \uput[dr](9, 3){\small$\psi\big(T,\xi(T)\big)$}
      \uput[u](4.5, 4){\small$(t, \lref)$}
     
      \psbezier[linecolor=red]{<-}(9,4)(9, 3.5)(9, 3.5)(9, 3)
 
      \psbezier[linecolor=black]{-}(9,3)(5,5)(2,2)(0,2.5)
      \psline{<-}(4, 3.4)(3.7, 3.3)
      \uput[d](4, 3.3){\small$(t,  \lambda(t))$}
      \psline[linecolor=red]{<-}(0, 2.5)(0, 4)
 
      \psline{<-}(4.2,1.75)(4.2,0.75) \uput[r](4.2,1.25){\small
        $(\pi\cH)^{-1}$}

      \psline{->}(0,-1.7)(0,1) \psline{->}(-0.4,-1.5)(10,-1.5)
      \uput[l](0,1){\small$M$} \uput[dr](10,-1.5){\small$t$}

      \uput[l](0,-1){\small$(0,\wh x_0)$} \uput[l](0,0.5){\small$(
        0,\xi(0))$}

      \psbezier[linecolor=blue]{-}(0,-1)(3,-0.8)(5,0)(9,0.4)
      \psline[linecolor=red]{-}(9,-0.4)(9,0.4)
      \psbezier[linecolor=black]{-}(0,0.5)(3,-1)(6,-0.8)(9,-0.4)
      \psline[linecolor=red]{-}(0,0.5)(0,-1)
      \uput[r](9,0.4){\small$(T,x_f)$}
      \uput[dr](9,-0.4){\small$( T,\xi(T))$}
    \end{pspicture}
  \caption{The closed path $\Gamma$ and its preimage}
  \label{fig:circ}
\end{figure}
Since the one-form $\omega$ is exact we get
\begin{equation*}
  0 = \oint_\Gamma \omega = 
  \int_{\psi(\Xi)} \omega
  + \int_{\psi(\Phi_T)}\omega 
  - \int_{\psi(\wh\Xi)} \omega   + \int_{\psi(\Phi_0)}\omega .
\end{equation*}
From the definition of $\omega$ and the maximality properties of $H$
we get
\begin{equation}
  \int_{\psi(\wh\Xi)} \omega
  =  0 ,  \qquad
  \int_{\psi(\Xi)}  \omega
  \leq  0 \label{eq:integ}
\end{equation}
so that
\begin{equation} \label{eq:ineq} \int_{\psi(\Phi_T)}\omega +
  \int_{\psi(\Phi_0)}\omega \geq 0.
\end{equation} 
Since
\begin{gather*}
  \int_{\psi(\Phi_T)}\omega = \int_{(\pi\cH_T)^{-1}\circ\Phi_T} \hspace{-5mm}
  \ud\, (\chi_T\circ(\pi\cH_T)^{-1}) =
  \chi_T\circ(\pi\cH_T)^{-1}(\wh x_f) -
  \chi_T\circ(\pi\cH_T)^{-1}(\xi(T)) , \\
  \int_{\psi(\Phi_0)}\omega = \int_0^1
  \scal{\ud\a(\phi_0(s))}{\dot\phi_0(s)} \ud s = \alpha(\xi(0))  -
  \alpha(\wh x_0) ,
\end{gather*} 
inequality \eqref{eq:ineq} yields
\begin{equation}\label{eq:disequa}
  \alpha(\xi(0)) - \alpha(\wh x_0) + \chi_T\circ (\pi\cH_T)^{-1}(\wh x_f) - 
  \chi_T \circ (\pi\cH_T)^{-1}(\xi(T)) \geq 0.
\end{equation}
Thus
\begin{multline}
  \label{eq:riduci}
  \alpha(\xi(0)) + \beta(\xi(T))
  - \alpha(\wh x_0) - \beta(\wh x_f) \geq \\
  \geq \left(\chi_T \circ (\pi\cH_T)^{-1} + \beta \right)(\xi(T)) -
  \left( \chi_T \circ (\pi\cH_T)^{-1} + \beta \right)(\wh x_f)
\end{multline}
that is: we only have to prove the local minimality at $\wh x_f$ of the
function
\[
\cF \colon x \in N_f \cap \cO(\wh x_f) \mapsto \left( \chi_T \circ
  (\pi\cH_T)^{-1} + \beta \right)(x) \in \R .
\]
where $\cO(\wh x_f)$ is a small enough neighborhood of $\wh x_f$.

In proving both the invertibility of $\id \times \pi\cH$ and the local
minimality of $\wh x_f$ for $\cF$ we shall exploit the positivity of
the second variations of problem (FP).  See \cite{AG90, AG97, AS04}
for a more general introduction to Hamiltonian methods.

\section{The maximized flow}\label{sec:maxflow}
We are now going to prove the properties of the maximized Hamiltonian
$H$ and of the flow - given by classical solutions - of the
associated Hamiltonian vector field $\vH$. 
Such flow will turn out to be Lipschitz continuous and
piecewise-$C^1$.  In such construction we shall use 
only the regularity assumptions \ref{ass:1}-\ref{ass:2}-\ref{ass:3}
and not the positivity of the second variations of problems (FP).

We shall proceed as follows:
\begin{itemize}
\item[\em Step 1:] we first consider the simple switches occurring
  before the double one. We shall explain the procedure in details for
  the first simple switch. The others are treated iterating such
  procedure \cite{ASZ02b};
\item[\em Step 2:] we decouple the double switch obtaining two simple
  switches that might coincide and that give rise to as many flows;
\item[\em Step 3:] We consider the simple switches that occur after
  the double one. For each of the flows originating from the double
  switch we apply the same procedure of Step 1.
\end{itemize}
\begin{itemize}
\item[\em Step 1:] Regularity Assumption \ref{ass:1} implies that
  locally around $\lo$, the maximized Ha\-mil\-to\-nian is $K_{00}$
  and that $\wh\lambda(t)$, i.e.~the flow of $\vK_{00}$ evaluated in
  $\lo$, intersects the level set $\{\ell \in T^*M \colon K_{01}(\ell)
  = K_{00}(\ell)\}$ at time $\hat\theta_{01}$. Assumption \ref{ass:2}
  yields that such intersection is transverse. This suggests us to
  define the switching function $\theta_{01}(\ell)$ as the time when
  the flow of $\vK_{00}$, emanating from $\ell$, intersects such level
  set and to switch to the flow of $\vK_{01}$ afterwards.  To be more
  precise, we apply the implicit function theorem to the map
  \[
  \Phi_{01}(t, \ell) := (K_{01} - K_{00})\circ\exp t\vK_{00}(\ell)
  \]
  in a neighborhood of $(t, \ell) := (\hat\theta_{01}, \lo)$ in $[0,
  T] \times T^*M$, so that $H(\ell) = K_{00}(\ell)$ for
  any $t \in [0, \theta_{01}(\ell)]$.  We then iterate this procedure
  and obtain the switching surfaces $\{ (\theta_{0j}(\ell), \ell)
  \colon \ell \in \cO(\lo)\}$, $j=1, \ldots, J_0$ where:
  \[
  \theta_{00}(\ell) := 0 \qquad\phi_{00}(\ell) :=\ell
  \]
  and, for $ j=1, \ldots, J_0$, we have
  \begin{itemize}
  \item $\theta_{0j}(\ell)$ is the unique solution to
    \[
    \left( K_{0j} - K_{0, j-1} \right)\circ\exp \theta_{0j}(\ell)
    \vK_{0, j-1} \left(\phi_{0, j-1}(\ell) \right) =0
    \]
    defined by the implicit function theorem in a neighborhood of
    $(t, \ell) = (\hat\theta_{0j}, \lo)$;
  \item $\phi_{0 j}(\ell)$ is defined by
    \begin{equation}
      \phi_{0 j}(\ell) := \exp \big( -\theta_{0j}(\ell) \vK_{0j}
      \big) \circ \exp \theta_{0j}(\ell)\vK_{0, j-1} \left(\phi_{0,
          j-1}(\ell) \right). \label{eq:phi0j}
    \end{equation}
  \end{itemize}

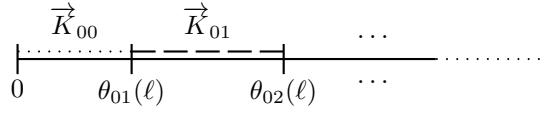
\begin{figure}[ht!]
\centering
      \begin{pspicture}(0,-0.5)(7,0.5)
        \psline{-}(0,0)(5.5,0)
        \psline[linestyle=dotted]{-}(5.5,0)(7,0)
        \psline{-}(0,-.2)(0,.2) \uput[d](0,-0.1){\small $0$}
        \psline{-}(1.5,-0.2)(1.5,0.2) 
        \uput[d](1.5,-0.1 ){\small $\theta_{01}(\ell)$}
        \psline[linestyle=dotted]{-}(0,0.1)(1.5,0.1)
        \uput[u](.75,0.1){\small $\vK_{00}$}
        \psline{-}(3.5,-0.2)(3.5,0.2)
        \uput[u](2.5,0.1){\small $\vK_{01}$}
        \psline[linestyle=dashed, dash=3mm 1mm]{-}(1.5,0.1)(3.5,0.1)
        \uput[d](3.5,-0.1 ){\small $\theta_{02}(\ell)$}
        \uput[d](4.7,-0.1 ){\small \ldots}
        \uput[u](4.7,0.1 ){\small \ldots}
      \end{pspicture}
\caption{Construction of the maximized flow.}
\end{figure}

\item[\em Step 2: ] Let us now show how to decouple the double
  switching time in order to define the maximized Hamiltonian
  $H(\ell)$ in a neighborhood of $(\hat\tau,
  \wh\lambda(\hat\tau))$.
  In this we depart from \cite{ASZ02b} in that we introduce the new
  vector fields $k_1$, $k_2$ in the sequence of values assumed by the
  reference vector field. We do this in five stages:
  \begin{itemize}
  \item for $\nu =1, \ 2$ let $\tau_\nu(\ell) $ be the unique solution
    to
    \[
    2F_\nu \circ \exp \tau_\nu
    (\ell)\vK_{0J_0} (\phi_{0 J_0}(\ell)) =
    \left( K_\nu - K_{0 J_0} \right)\circ \exp \tau_\nu
    (\ell)\vK_{0J_0} (\phi_{0 J_0}(\ell)) = 0
    \]
    defined by the implicit function theorem in a neighborhood of
    $(\hat\tau, \lo)$;
  \item choose
    \begin{equation*}
      \theta_{0, J_0+1}(\ell) := \min \left\{ \tau_1(\ell),
        \tau_2(\ell)
      \right\};
    \end{equation*}
  \item for $\nu =1, 2$, define
    \begin{equation*}
      \phi_{0, J_0 + 1}^\nu(\ell) := \exp \big( - \tau_\nu(\ell)\vK_\nu \big)\circ \exp \tau_\nu(\ell) \vK_{0 J_0}
      \left(\phi_{0 J_0}(\ell)\right),
    \end{equation*}
    and let $\theta_{10}^\nu(\ell)$ be the unique solution to
    \begin{multline*}
      \quad 2F_{3-\nu}\circ\exp \theta_{10}(\ell) \vK_\nu
    \left(\phi_{0, J_0+1}^\nu(\ell) \right) = \\
    = \left( K_{10} - K_\nu \right)\circ\exp \theta_{10}(\ell) \vK_\nu
    \left(\phi_{0, J_0+1}^\nu(\ell) \right) =0 \quad 
    \end{multline*}
    defined by the implicit function theorem in a neighborhood of
    $(\hat\tau, \lo)$;
  \item for $\nu=1, 2$ define
    \[
    \phi_{10}^\nu := \exp \big( - \theta_{10}^\nu(\ell)\vK_{10} \big)
    \circ \exp \theta_{10}^\nu(\ell) \vK_\nu \left( \phi^\nu_{0, J_0 +
        1}(\ell) \right);
    \]
  \item choose
    \[
    \theta_{10} (\ell) =
    \begin{cases}
      \theta^1_{10}(\ell) \ & \text{if } \tau_1(\ell) \leq \tau_2(\ell), \\
      \theta^2_{10}(\ell) \ & \text{if } \tau_2(\ell) < \tau_1(\ell).
    \end{cases}
      \]
  \end{itemize}
  Notice that if $\tau_1(\ell) = \tau_2(\ell)$, then
  $\theta^1_{10}(\ell) = \theta^2_{10}(\ell) = \tau_1(\ell) =
  \tau_2(\ell)$ so that $\theta_{10}(\cdot)$ is continuous. To be more
  precise, the function $\theta_{10}(\cdot)$ is Lipschitz continuous
  on its domain and is actually $C^1$ on its domain but with the only
  possible exception of the set $\{\ell \in T^*M \colon \tau_1(\ell) =
  \tau_2(\ell)\}$. 
\item[\em Step 3: ] Finally we define analogous quantities for the
  simple switching times that follow the double one. For each $j=1,
  \ldots, J_1$ we proceed in three stages:
  \begin{itemize}
  \item for $\nu=1, 2$ let $\theta^\nu_{1j}(\ell)$ be the unique
    solution to
    \[
    \left( K_{1j} - K_{1, j-1} \right)\circ\exp
    \theta^\nu_{1j}(\ell)\vK_{1,j-1} \left(\phi^\nu_{1, j-1}(\ell)
    \right) = 0
    \] defined by the implicit function theorem in a neighborhood of
    $(\hat\theta^\nu_{1j}, \lo)$;
  \item define
    \[
    \phi^\nu_{1 j}(\ell) := \exp \big( - \theta^\nu_{1j}(\ell)
    \vK_{1j} \big)\circ \exp \theta^\nu_{1j}(\ell) \vK_{1, j-1}
    \left(\phi^\nu_{i, j-1}(\ell) \right);
    \]
  \item choose
    \[
    \theta_{1j}(\ell) = \begin{cases}
      \theta^1_{1j}(\ell) \ & \text{if } \tau_1(\ell) \leq \tau_2(\ell) \\
      \theta^2_{1j}(\ell) \ & \text{if } \tau_2(\ell) < \tau_1(\ell).
    \end{cases}
    \]
  \end{itemize}
  We conclude the procedure defining $\theta_{1, J_1+1}(\ell) =
  \theta^1_{1, J_1+1}(\ell) = \theta^2_{1, J_1+1}(\ell):= T$.
\end{itemize}
To justify the previous procedure we have to show that we can actually
apply the implicit function theorem to define the switching times
$\theta_{ij}(\ell)$ and that they are ordered as follows:
\[
\theta_{0, j-1}(\ell) < \theta_{0j}(\ell) \ldots <\theta_{0J_0}(\ell)
< \theta_{0, J_0 + 1} (\ell) \leq \theta_{10}(\ell) <
\theta_{11}(\ell) < \ldots .
\]
We prove it with an induction argument. The functions
$\theta_{00}(\cdot)$ and $\phi_{00}(\cdot)$ are obviously well
defined. Assume that $\theta_{0j}$, $\phi_{0j} $ are well defined for
some $j \geq 1$ and let
\begin{equation*}
  \Phi_{0, j+1}(t,\ell) = \left( K_{0, j+1} - K_{0,j}\right) 
  \circ \exp t \vK_{0j}
  \circ\phi_{0j}(\ell).
\end{equation*}
Then one can compute
\begin{equation*}
  \left. \dfrac{\partial\Phi_{0, j+1}}{\partial t}
  \right\vert_{(\hat\theta_{0, j+1}, \lo)} = \dueforma{\vK_{0j}}{\vK_{0, j+1}} (\wh\lambda(\hat\theta_{0, j+1})) 
\end{equation*}
which is positive by Assumption \ref{ass:2}, so that the implicit
function theorem yields the $C^1$ function $\theta_{0, j+1}$.  Thus,
we also get a $C^1$ function $\phi_{0, j+1}$ by equation
\eqref{eq:phi0j}.  By induction, the $\theta_{0j}$'s are well defined
for any $j= 1, \ldots, J_0$ and, by continuity, the order is preserved
for $\ell$ in a neighborhood of $\lo$.  Also, the implicit function
theorem yields a recursive formula for the linearizations of
$\theta_{0j}$ and $\varphi_{0j}$ at $\lo$:
\begin{equation}\label{eq:lintheta0j}
  \scal{\ud\theta_{0j}(\lo)}{\dl} = \dfrac{- \dueforma{\exp(\hat\theta_{0j}\vK_{0, j-1})_*\varphi_{0, j-1 \, *}(\dl)}{ (\vK_{0j} - \vK_{0,j-1}) (\wh\lambda(\hat\theta_{0j}))}}{\dueforma{\vK_{0, j-1}}{\vK_{0j}}(\wh\lambda(\hat\theta_{0j}))}
\end{equation}
\begin{multline}\label{eq:linphi0j}
  \varphi_{0j*}(\dl) = \exp(- \hat\theta_{0j}\vK_{0j})_* \Big\{
  - \scal{\ud\theta_{0j}(\lo)}{\dl}(\vK_{0j} - \vK_{0, j-1})(\wh\lambda(\hat\theta_{0j})) + \\
  + \exp(\hat\theta_{0j}\vK_{0, j-1})_* \varphi_{0, j-1 \, *}(\dl)
  \Big\}.
\end{multline}
Let us show that $\theta_{0, J_0 +1}$ and $\theta_{10}$ are also well
defined. Let
\begin{equation*}
  \Psi_\nu(t,\ell) = \left(K_\nu - K_{0 J_0} \right) 
  \circ \exp t \vK_{0 J_0}  \circ\phi_{0J_0}(\ell)
  \qquad \nu =1, 2.
\end{equation*}
Then
\[ \left. \dfrac{\partial\Psi_\nu}{\partial t} \right\vert_{(\hat\tau
  , \lo)} = \dueforma{\vK_{0 J_0}}{\vK_\nu} (\wh\lambda(\hat\tau ))
\quad \nu = 1,2
\]
which are positive by Assumption \ref{ass:3}, so that $\tau_1(\cdot)$
and $\tau_2(\cdot)$ are both well defined again by means of the
implicit function theorem.

Now let
\begin{equation*}
  \Phi_{10}^\nu(t,\ell) = \left( K_{10} - K_\nu\right) 
  \circ \exp t \vK_\nu \circ\phi^\nu_{0, J_0 + 1}(\ell), \quad \nu=1,2
\end{equation*}
then
\[
\left. \dfrac{\partial\Phi_{1 0}^\nu}{\partial t}
\right\vert_{(\hat\tau, \lo)} = \dueforma{\vK_\nu}{\vK_{1 0}}
(\wh\lambda(\hat\tau)) , \quad \nu=1,2
\]
which are positive again by Assumption \ref{ass:3}, and the same
argument applies.

As already mentioned, by assumption $\hat\theta_{0, j-1} < \hat\theta_{0j}$ and
$\hat\theta_{0J_0} < \hat\tau$ so that, by continuity, $\theta_{0, j-1}(\ell)
< \theta_{0j}(\ell)$ and $\theta_{0J_0}(\ell) < \theta_{0,
  J_0+1}(\ell) = \min \{\tau_1(\ell), \tau_2(\ell)\}$ for any $\ell$
in a sufficiently small neighborhood of $\lo$.

Let us now show that $\theta_{0, J_0+1}(\ell) \leq \theta_{10}(\ell)$.
We examine all the possibilities for $\tau_1(\ell)$ and
$\tau_2(\ell)$:
\begin{itemize}
\item assume $\ell$ is such that $\theta_{0, J_0+1}(\ell) =
  \tau_1(\ell) < \tau_2(\ell)$. Since $\Psi_2(\tau_2(\ell), \ell) 
  = 0$ one has
  \begin{equation*}
    \begin{split}
    \Psi_2(t, \ell) & = \dfrac{\partial\Psi_2}{\partial t}(\tau_2(\ell),
    \ell) (t - \tau_2(\ell)) + o ( t - \tau_2(\ell) )  = \\
    & = (t - \tau_2(\ell)) \left(
      \left. \dueforma{\vK_{0J_0}}{\vK_2}\right\vert_{\exp\tau_2(\ell)\vK_{0J_0}\circ
        \phi_{0J_0}(\ell)} + o(1)\right). 
    \end{split}
  \end{equation*}
  In particular, choosing $t = \theta_{0, J_0 +1}(\ell) =
  \tau_1(\ell)$, by Assumption \ref{ass:3} and by continuity, when
  $\ell$ is sufficiently close to $\lo$, we have $
  \Upsilon_{\ell}(\theta_{0, J_0 +1}(\ell)) < 0$, that is:
  \begin{equation}\label{eq:segue}
    \Psi_2(\theta_{0, J_0 + 1}(\ell), \ell) = 
    \left( K_2 - K_{0J_0} \right)\circ\exp \theta_{0, J_0+1}(\ell)
    \vK_{0 J_0} \circ\phi_{0J_0}(\ell) < 0 . 
  \end{equation}
  Since $ K_2 - K_{0J_0} = 2F_2 = K_{10} - K_1$, equation \eqref{eq:segue}
  can also be written as
  \begin{equation*}
    0 > \left(
      K_{10} - K_1 \right) \circ \exp 0 \vK_1 
    \circ\exp \theta_{0, J_0+1}(\ell)\vK_{0J_0}
    \circ\phi_{0J_0}(\ell),
  \end{equation*}
  i.e.~the switch of the component $u_2$ has not yet occurred at time
  $\tau_1(\ell)$, so that $\theta_{10}^1(\ell) - \tau_1(\ell) >0 $.
\item Analogous proof holds if $\theta_{0, J_0+1}(\ell) = \tau_2(\ell)
  < \tau_1(\ell)$,
\item If $\ell$ is such that $\tau_1(\ell) = \tau_2(\ell)$, then
  $\theta_{10}(\ell) = \theta_{0, J_0+1}(\ell)$.
\end{itemize}
For the simple switches occurring after the double one, by continuity,
we have:
\[
\theta_{1j}(\ell) \leq \max \{\theta_{1j}^1(\ell), \theta_{1j}^2(\ell)
\} < \min \{\theta_{1,j+1}^1(\ell), \theta_{1,j+1}^2(\ell) \} \leq
\theta_{1, j+1}(\ell)
\]
for $\ell$ in a sufficiently small neighborhood of $\lo$.

For the purpose of future reference we report here the expression for
the differentials of the $\theta_{0j}$'s, $\tau_\nu$'s and
$\theta_{1j}^\nu$'s, and of the $\phi_{0j \, *}$'s $\phi^\nu_*$'s and
$\phi^\nu_{1j \, *}$'s. Such formulas can be proved with an induction
argument.
\begin{lemma}
  For any $j=1, \ldots, J_0$ consider the following endomorphism of $
  T_{\lo}(T^*M)$:
  \begin{equation}
    \label{eq:delta0j}
    \Delta_{0j} \dl =  \dl - \dsum_{s=1}^{j}
    \scal{\ud\theta_{0s}(\lo)}{\dl}(\vG_{0s} - \vG_{0, s-1})(\lo).
  \end{equation}
  Then
  \begin{align}\label{eq:lintheta0jind}
    \scal{\ud\theta_{0j}(\lo)}{\dl} & = \dfrac{ - \dueforma{
        \Delta_{0,j-1} \dl }{ (\vG_{0j} - \vG_{0,j-1})
        (\lo)}}{\dueforma{\vG_{0,
          j-1}}{\vG_{0j}}(\lo)},  \\
    \label{eq:linphi0jind}
    \varphi_{0j*}(\dl) & = \exp(- \hat\theta_{0j}\vK_{0j})_*
    \wh\cH_{\hat\theta_{0j\, *}}\Delta_{0j} \dl ,\\
    \label{eq:lintaunuind}
    \scal{\ud\tau_\nu(\lo)}{\dl} & = \dfrac{ - \dueforma{
        \Delta_{0J_0} \dl }{ (\vH_{\nu} - \vG_{0J_0})
        (\lo)}}{\dueforma{\vG_{0 J_0}}{\vH_{\nu}}(\lo)} , \\
    \label{eq:lintheta10ind}
    \begin{split}
      \scal{\ud\theta^\nu_{10}(\lo)}{\dl} & =
      \dfrac{-1}{\dueforma{\vH_\nu}{\vG_{10}}(\lo)} \\
      & \bsi \Big(\Delta_{0J_0} \dl -
      \scal{\ud\tau_\nu(\lo)}{\dl}(\vH_{\nu} - \vG_{0J_0}) (\lo)\, ,
      \, (\vG_{10} - \vH_\nu)(\lo) \Big)
    \end{split} \\
    \intertext{and}
    \label{eq:linphinuind}
    \varphi^\nu_{0, J_0+1 \, *}(\dl) & = \exp(- \hat\tau\vK_{\nu})_*
    \wh\cH_{\hat\tau_*} \Big(\Delta_{0J_0}\dl -
    \scal{\ud\tau_\nu(\lo)}{\dl}(\vH_\nu - \vG_{0J_0})(\lo) \Big).
  \end{align}
  Moreover
  \begin{equation}\label{eq:tautheta}
    \begin{split}
      & \scal{\ud\theta^1_{10}(\lo)}{\dl} = \scal{\ud\tau_1(\lo)}{\dl}
      - \scal{\ud \, (\tau_1 - \tau_2)(\lo)}{\dl}
      \frac{\dueforma{\vG_{0J_0}}{\vH_2}(\lo)}{\dueforma{\vH_1}{\vG_{10}}(\lo)}, \\
      & \scal{\ud\theta^2_{10}(\lo)}{\dl} = \scal{\ud\tau_2(\lo)}{\dl}
      - \scal{ \ud \, (\tau_2 - \tau_1)(\lo)}{\dl}
      \frac{\dueforma{\vG_{0J_0}}{\vH_1}(\lo)}{\dueforma{\vH_2}{\vG_{10}}(\lo)}.
    \end{split}
  \end{equation}
  Also, for $\nu =1, 2$ and $j=0, \ldots, J_1$ consider the
  endomorphisms
  \begin{equation}
    \label{eq:delta1jnu}
    \begin{split}
      & \Delta^\nu_{1j} \dl = \Delta_{0J_0}\dl
      - \scal{\ud\tau_\nu(\lo)}{\dl}(\vH_\nu - \vG_{0J_0})(\lo) - \\
      & - \scal{\ud\theta^\nu_{10}(\lo)}{\dl}(\vG_{10} - \vH_\nu)(\lo)
      - \dsum_{s=1}^j \scal{\ud\theta_{1s}^\nu(\lo)}{\dl}\left(
        \vG_{1s} - \vG_{1, s-1} \right)(\lo)
    \end{split}
  \end{equation}
  Then
  \begin{align}
    \label{eq:linphi10ind}
    \varphi^\nu_{10 \, *}(\dl) & = \exp(- \hat\theta_{10}\vK_{10})_*
    \wh\cH_{\hat\theta_{10 \, *}} \Delta^\nu_{10}\dl, \\
    \label{eq:lintheta1jind}
    \scal{\ud\theta^\nu_{1j}(\lo)}{\dl} & = \dfrac{- \, \dueforma{
        \Delta^\nu_{1, j-1} \dl }{ (\vG_{1j} - \vG_{1,
          j-1})(\lo)}}{\dueforma{\vG_{1, j-1}}{\vG_{1j}}(\lo)} , \\
    \intertext{and}
    \label{eq:linphi1jind}
    \varphi^\nu_{1j \, *}(\dl) & = \exp(- \hat\theta_{1j}\vK_{1j})_*
    \wh\cH_{\hat\theta_{1j \, *}} \Delta^\nu_{1j} \dl .
  \end{align}
\end{lemma}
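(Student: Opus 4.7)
The proof is by induction on the switching index, following the same three–step structure (before, at, after the double switch) used to build the maximized flow in Section \ref{sec:maxflow}. All the formulas are derived from the implicit function theorem applied to the defining equations $\Phi_{0j}(\theta_{0j}(\ell),\ell)=0$, $\Psi_\nu(\tau_\nu(\ell),\ell)=0$, $\Phi^\nu_{10}(\theta^\nu_{10}(\ell),\ell)=0$ and $\Phi^\nu_{1j}(\theta^\nu_{1j}(\ell),\ell)=0$, combined with the standard pull-back identities
\[
\wh S_{t\,*}^{-1}\vK_{ij}\circ\wh S_t = \vG_{ij},\qquad
\dueforma{\wh\cH_{t\,*}\mathrm{v}}{\wh\cH_{t\,*}\mathrm{w}}(\wh\lambda(t))=\dueforma{\mathrm{v}}{\mathrm{w}}(\lo),
\]
the second one expressing the invariance of the canonical symplectic form under the Hamiltonian flow $\wh\cH$.

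The base case is immediate: $\theta_{00}(\ell)=0$ and $\phi_{00}(\ell)=\ell$, so $\Delta_{00}\dl=\dl$ and \eqref{eq:lintheta0jind}–\eqref{eq:linphi0jind} coincide with the non-iterated formulas \eqref{eq:lintheta0j}–\eqref{eq:linphi0j} once we rewrite $\exp(\hat\theta_{0j}\vK_{0,j-1})_*\varphi_{0,j-1\,*}(\dl)$ as $\wh\cH_{\hat\theta_{0j}\,*}\Delta_{0,j-1}\dl$ via the inductive hypothesis and telescope the exponential factors. For the inductive step on $\theta_{0j}$, I differentiate the implicit equation, apply the chain rule to $\phi_{0,j-1}$, rewrite the numerator using the symplectic invariance above so that the evaluation point is moved from $\wh\lambda(\hat\theta_{0j})$ back to $\lo$ (turning $\vK$ into $\vG$), and recognize the resulting expression as $\Delta_{0,j-1}\dl$ paired symplectically with $(\vG_{0j}-\vG_{0,j-1})(\lo)$; the denominator is handled identically. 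The formula for $\varphi_{0j\,*}$ then follows from \eqref{eq:phi0j} by computing $(\exp(-\hat\theta_{0j}\vK_{0j}))_*$ composed with the differential of $\exp t\vK_{0,j-1}\circ\phi_{0,j-1}$ at $t=\hat\theta_{0j}$ and subtracting the contribution $\scal{\ud\theta_{0j}(\lo)}{\dl}(\vK_{0j}-\vK_{0,j-1})$ coming from the dependence on $t$, after which the identification $\Delta_{0j}=\Delta_{0,j-1}-\scal{\ud\theta_{0j}}{\cdot}(\vG_{0j}-\vG_{0,j-1})$ gives \eqref{eq:delta0j}.

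The same procedure, applied to $\Psi_\nu$ at the entry into the double switch, produces \eqref{eq:lintaunuind} with $\Delta_{0J_0}$; and applied to $\Phi^\nu_{10}$ with base point $\phi^\nu_{0,J_0+1}(\ell)$ produces \eqref{eq:linphinuind} and \eqref{eq:lintheta10ind}, where the argument entering the symplectic form at the numerator is exactly $\Delta_{0J_0}\dl-\scal{\ud\tau_\nu(\lo)}{\dl}(\vH_\nu-\vG_{0J_0})(\lo)$, because this is what survives when one differentiates $\phi^\nu_{0,J_0+1}$ using \eqref{eq:linphi0jind} and the same push-back computation as above. The identities \eqref{eq:tautheta} are then purely algebraic: substitute \eqref{eq:lintaunuind} into \eqref{eq:lintheta10ind} for $\nu=1$, expand $\Delta_{0J_0}\dl$ using \eqref{eq:lintaunuind} for the \emph{other} index $\nu=2$ to replace the $\vG_{0J_0}$-component, use the bilinearity of $\dueforma{\cdot}{\cdot}$, and cancel one of the two terms against $\scal{\ud\tau_1(\lo)}{\dl}\dueforma{\vH_1-\vG_{0J_0}}{\vG_{10}-\vH_1}/\dueforma{\vH_1}{\vG_{10}}=\scal{\ud\tau_1(\lo)}{\dl}$; this leaves precisely the stated combination with the coefficient $\dueforma{\vG_{0J_0}}{\vH_2}/\dueforma{\vH_1}{\vG_{10}}$.

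Finally, for the simple switches occurring after the double one, I iterate the same argument branchwise for $\nu=1,2$, starting from $\varphi^\nu_{10\,*}$ in \eqref{eq:linphi10ind} (which is obtained from \eqref{eq:linphinuind} in the very same way \eqref{eq:linphi0jind} was obtained from \eqref{eq:lintheta0jind}) and producing at each step $j$ the new operator $\Delta^\nu_{1j}$ by subtracting the just-computed term $\scal{\ud\theta^\nu_{1j}(\lo)}{\dl}(\vG_{1j}-\vG_{1,j-1})(\lo)$ from $\Delta^\nu_{1,j-1}$, as in \eqref{eq:delta1jnu}. I expect the main technical obstacle to be purely bookkeeping: carefully tracking the pull-backs $\exp(-\hat\theta\,\vK)_*\wh\cH_{\hat\theta\,*}$ through the composition \eqref{eq:phi0j} at every switching time, so that the symplectic form in the numerator is always evaluated at $\lo$ with $\vG$'s (respectively $\vH$'s at the double switch) rather than at $\wh\lambda(\hat\theta)$ with $\vK$'s; the nontrivial content beyond bookkeeping is concentrated in the algebraic simplification yielding \eqref{eq:tautheta}.
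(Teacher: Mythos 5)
Your proposal is correct and is precisely the induction argument the paper alludes to (the paper itself gives no proof beyond the sentence ``Such formulas can be proved with an induction argument''). The three ingredients you identify — the implicit-function-theorem differentials of the defining equations, the pull-back identity $\vG_{ij}(\lo)=\wh\cH_{\hat\theta_{ij}\,*}^{-1}\vK_{ij}(\wh\lambda(\hat\theta_{ij}))$ together with the telescoping $\exp\big((\hat\theta_{0j}-\hat\theta_{0,j-1})\vK_{0,j-1}\big)_*\wh\cH_{\hat\theta_{0,j-1}\,*}=\wh\cH_{\hat\theta_{0j}\,*}$, and the invariance of $\sigma$ under $\wh\cH_{t\,*}$ — are exactly what converts \eqref{eq:lintheta0j}--\eqref{eq:linphi0j} into the $\Delta$-form recursions \eqref{eq:lintheta0jind}--\eqref{eq:linphi1jind}.

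One small inaccuracy in your sketch of \eqref{eq:tautheta}: the scalar you invoke to ``cancel one of the two terms'' is not
$\dueforma{\vH_\nu-\vG_{0J_0}}{\vG_{10}-\vH_\nu}(\lo)/\dueforma{\vH_\nu}{\vG_{10}}(\lo)=1$. Using $\vG_{10}-\vH_\nu=\vH_{3-\nu}-\vG_{0J_0}$ and $\dueforma{\vG_{0J_0}}{\vG_{10}}=\dueforma{\vG_{0J_0}}{\vH_1}+\dueforma{\vG_{0J_0}}{\vH_2}$ one finds instead
\begin{equation*}
  \dueforma{\vH_\nu-\vG_{0J_0}}{\vG_{10}-\vH_\nu}(\lo)
  =\dueforma{\vH_\nu}{\vG_{10}}(\lo)-\dueforma{\vG_{0J_0}}{\vH_{3-\nu}}(\lo),
\end{equation*}
and it is this extra $-\dueforma{\vG_{0J_0}}{\vH_{3-\nu}}$ term, combined with $\dueforma{\Delta_{0J_0}\dl}{\vG_{10}-\vH_\nu}(\lo)=-\scal{\ud\tau_{3-\nu}(\lo)}{\dl}\,\dueforma{\vG_{0J_0}}{\vH_{3-\nu}}(\lo)$, that produces the coefficient $\dueforma{\vG_{0J_0}}{\vH_{3-\nu}}/\dueforma{\vH_\nu}{\vG_{10}}$ in \eqref{eq:tautheta}. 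The end result you claim is right; only the intermediate cancellation is mis-described.
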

Thus we get that the flow of the maximized Hamiltonian coincides with
the flow of the Hamiltonian $H\colon (t,\ell) \in [0, T] \times T^*M
\mapsto H_t(\ell) \in T^*M $:
\begin{equation}
  \label{eq:maxfl}
  H\colon (t,\ell) \in [0, T] \times T^*M \mapsto H(\ell) \in T^*M 
\end{equation}
\begin{equation*}
  H_t(\ell) :=
  \begin{cases}
    K_{0j}(\ell) &  t \in (\theta_{0j}(\ell), \theta_{0, j+1}(\ell)], \quad j=0, \ldots, J_0  \\
    K_\nu(\ell) & t \in (\theta_{0, J_0 +1}(\ell), \theta_{10}(\ell)]
    , \quad
    \theta_{0, J_0 + 1}(\ell) = \tau_\nu (\ell)\\
    K_{1j}(\ell) & t \in (\theta_{1j}(\ell), \theta_{1, j+1}(\ell)],
    \quad j=0, \ldots, J_1.
  \end{cases}
\end{equation*}
\section{The second variation}
\label{sec:lambda}
To choose an appropriate horizontal Lagrangian manifold $\Lambda$ we
need to write the second variations of sub-problem (FP) and exploit
their positivity.  To write an invariant second variation, as
introduced in \cite{ASZ98a}, we write the pull-back $\zeta_t(x,
\delta, \e)$ of the flows $S_t$ along the reference flow $\wh S_t$,
which also permits us to analyze the influence of the double switch on
the final point of trajectories.

For the sake of greater clarity we first clear the field of all the
notational difficulties by performing our analysis in the case when
only the double switch occurs. Only afterwards we will discuss the
general case.

Let $\delta_{0 , J_0 + 1} := \min\{\ep_1, \ep_2 \}$, $\delta_{10} :=
\max\{\ep_1, \ep_2 \}$.  At time $ t = T$ we have
\begin{multline*}
  \zeta_T(x, \delta, \e) = \wh S_T^{-1}\circ S_T ( x, \delta, \e)
  = \exp\left(- \delta_{1 0}\right) g_{1 0} \circ \\
  \circ \exp\left(\delta_{10} - \delta_{0 1} \right) h_\nu \circ
  \exp\left( \delta_{01} - \delta_{00}\right) g_{0 J_0} (x)
\end{multline*}
where $\nu =1$ if $\ep_1 \leq \ep_2$, $\nu =2$ otherwise.
Let $\wt f_1$ and $\wt f_2$ be the pull--backs of $f_1$ and $f_2$ from
time $\hat\tau$ to time $t=0$, i.e., 
\[
\wt f_\nu := \wh S_{\hat\tau\, *}^{-1} f_\nu \circ \wh S_{\hat\tau},
\quad \nu=1, 2
\]
so that 
\[
h_\nu = g_{0J_0} + 2 \wt f_\nu, \quad \nu =1, 2 , \qquad 
g_{10} = g_{0J_0} + 2 \wt f_1 + 2 \wt f_2.
\]
The linearized flow at time $T$ has the following form:
\begin{equation*}
 L(\dx,\delta,\e)=\dx + (\delta_{11}-\delta_{01})g_{01}(x)+2(\delta_{11}-\e_1)
\wt f_1(x)+2(\delta_{11}-\e_2) \wt f_2(x),
\end{equation*}
which shows that the flow is $C^1$. 

Let us now go back to the general case: at time $ t = T$ we have
\begin{equation*}
  \begin{split}
    & \zeta_T(x, \delta, \e) = \wh S_T^{-1}\circ S_T ( x, \delta, \e)
    = \exp\left(- \delta_{1 J_1}\right) g_{1 J_1} \circ \ldots \circ
    \exp\left(\delta_{1 1} - \delta_{10} \right) g_{10}
    \circ \\
    & \circ \exp\left(\delta_{10} - \delta_{0, J_0 + 1} \right) h_\nu
    \circ \exp\left( \delta_{0, J_0 + 1} - \delta_{0 J_0}\right) g_{0
      J_0} \circ
    \ldots 
    \circ\exp\delta_{01}g_{00}(x)
  \end{split}
\end{equation*}
where $\nu =1$ if $\ep_1 \leq \ep_2$, $\nu =2$ otherwise.

Define
\begin{align*}
  a_{00} & := \delta_{01}; \displaybreak[0] \\
  a_{0j} & := \delta_{0, j+1} - \delta_{0 j}  \qquad  j = 1, \ldots, J_0 ; \displaybreak[0]\\
  b & := \delta_{10} - \delta_{0, J_0 +1 } ; \\
  a_{1j} & := \delta_{1, j+1} - \delta_{1 j}  \qquad  j = 0, \ldots, J_1 - 1; \displaybreak[0]\\
  a_{1 J_1} & := - \delta_{1 J_1}.
\end{align*}
Then $ b + \dsum_{i=0}^1\dsum_{j=0}^{J_i} a_{ij} = 0$ and, with a
slight abuse of notation, we may write
\begin{equation*}
  \begin{split}
    & \zeta_T(x, a, b)= \exp a_{1 J_1} g_{1 J_1} \circ \ldots \circ
    \exp a_{11} g_{11}
    \circ \exp a_{10} g_{10} \\
    & \circ \exp b h_\nu \circ \exp a_{0 J_0} g_{0 J_0} \circ \ldots
    \circ\exp a_{0 1} g_{01} \circ\exp a_{0 0} g_{0 0}(x),
  \end{split}
\end{equation*}-
where $\nu =1$ if $\ep_1 \leq \ep_2$, $\nu =2$ otherwise. Henceforward
we will denote by $a$ the $(J_0 + J_1 + 2)$-tuple $(a_{00}, \ldots,
a_{0J_0}, a_{10}, \ldots, a_{1J_1})$.

The reference flow is the one associated to $(a, b) = (0, 0)$.  The
first order approximation of $\zeta_T$ at a point $(x, 0, 0)$ is given
by
\begin{equation*}
  \begin{split}
    L(\dx, a, b) & = \dx + b h_\nu(x) +
    \sum_{i=0}^1\sum_{j=0}^{J_i}a_{ij}g_{ij}(x) = \\[-3mm]
    = & \, \dx + \sum_{j=0}^{J_0 - 1}a_{0j}g_{0j}(x)
    + (\delta_{0, J_0 + 1} - \delta_{0J_0 }) g_{0J_0}(x) + \\[-3mm]
    & + (\delta_{10} - \delta_{0, J_0 +1 })h_\nu(x) + (\delta_{11} -
    \delta_{10})g_{10}(x) + \sum_{j=1}^{J_1}a_{1j}g_{1j}(x)
  \end{split}
\end{equation*}
where $\nu =1$ if $\ep_1 \leq \ep_2$, $\nu =2$ otherwise.  Introduce
the pull-backs of $f_1$ and $f_2$ from time $\hat\tau$ to time $t=0$:
\[
\wt f_\nu := \wh S_{\hat\tau\, *}^{-1} f_\nu \circ \wh S_{\hat\tau}
\quad \nu=1, 2.
\]
Then $h_\nu = g_{0J_0} + 2 \wt f_\nu$, $\nu =1, 2$, and $g_{10} =
g_{0J_0} + 2 \wt f_1 + 2 \wt f_2$.  Thus
\begin{align}
  \begin{split}
    & L(\dx, a, b) = \, \dx + \sum_{j=0}^{J_0 - 1}a_{0j}g_{0j}(x)
    + (\delta_{0, J_0 + 1} - \delta_{0J_0 }) g_{0J_0}(x) + \\[-3mm]
    & + (\delta_{10} - \delta_{0, J_0 +1 })( g_{0J_0} + 2 \wt f_\nu
    )(x)
    + (\delta_{11} - \delta_{10}) ( g_{0J_0} + 2 \wt f_1 + 2 \wt f_2
    )(x) + \sum_{j=1}^{J_1}a_{1j}g_{1j}(x) =
  \end{split}\notag\displaybreak[0]\\[-3mm]
  \begin{split}
    & = \, \dx + \sum_{j=0}^{J_0 - 1}a_{0j}g_{0j}(x) + (\delta_{11} -
    \delta_{0J_0 }) g_{0J_0}(x) +
    2 (\delta_{11} - \ep_1) \wt f_1(x) + \\
    & + 2 (\delta_{11} - \ep_2) \wt f_2 (x) +
    \sum_{j=1}^{J_1}a_{1j}g_{1j}(x).  \label{eq:L}
  \end{split}
\end{align}
\begin{remark}\label{rem:L}
Equation \ref{eq:L} shows that in $ L(\dx, a, b)$ we have the same
first order expansion, whatever the sign of $\ep_2 - \ep_1$. This
proves that the finite-dimensional problem (FP) is $C^1$.
\end{remark}

Let $\wh\beta := \beta\circ \wh S_T$ and $\wh\gamma := \alpha
+\wh\beta $.  Then the cost \eqref{eq:subcost} can be written as
\[
J(x, a, b) = \a(x) + \beta\circ S_T(x, a, b) = \a(x) + \wh\beta\circ
\zeta_T(x, a, b)
\]
By PMP $\ud\wh\gamma(\wh x_0) = 0$, thus the first variation of $J$ at
$(x, a, b) = (\hat x_0, 0, 0)$ is given by
\begin{equation*}
  J^\prime (\dx, a, b) =  \Big( b h_\nu + \dsum_{i=0}^1
  \dsum_{j=0}^{J_i}a_{ij} g_{ij} \Big)\cdot\wh\beta(\wh x_0)
\end{equation*}
which, by \eqref{eq:L}, does not depend on $\nu$, i.e. it does not
depend on the sign of $\ep_2 - \ep_1$.

On the other hand, the second order expansion of $\zeta^\nu_T(x,
\cdot, \cdot )$ at $(a,b) = (0,0)$ is given by
\begin{equation*}
  \begin{split}
    \exp & \Bigg(b h_\nu + \dsum_{i=0}^1\dsum_{j=0}^{J_i}a_{ij}g_{ij}
    + \dfrac{1}{2}\Bigg\{ \dsum_{j=0}^{J_0}a_{0j}\Big[ g_{0j},
    \dsum_{s= j+1}^{J_0} a_{0s} g_{0s} + b h_\nu + \dsum_{j=0}^{J_1}
    a_{1j} g_{1j} \Big] +
    \\
    & + b \Big[ h_\nu, \dsum_{j=0}^{J_1} a_{1j} g_{1j} \Big] +
    \dsum_{j=0}^{J_1} a_{1j}\Big[ g_{1j}, \dsum_{s= j+1}^{J_1} a_{1s}
    g_{1s} \Big] \Bigg\} \Bigg)( x ).
  \end{split}
\end{equation*}
where $\nu =1$ if $\ep_1 \leq \ep_2$, $\nu =2$ otherwise.  Proceeding
as in \cite{ASZ02b} we get for all $(\dx,a,b)\in
\ker J^\prime$,
\begin{align*}
  & J_\nu\se [(\dx, a, b)]^2 = \dfrac{1}{2}\Big\{ \ud^2 \wh\gamma (\wh
  x_0)[\dx]^2 
  + 2 \, \dx \cdot \Big( \dsum_{i=0}^{1} \dsum_{j=0}^{J_i} a_{ij}\, g_{ij} + b h_\nu \Big) \cdot \wh\beta (\wh x_0)   +   \displaybreak[0]\nonumber\\
  & + \Big( \dsum_{i=0}^{1} \dsum_{j=0}^{J_i}a_{ij}\, g_{ij} + b h_\nu
  \Big)^2 \cdot \wh\beta (\wh x_0) 
  + \dsum_{j=0}^{J_0} \dsum_{i=0}^{j-1} a_{0i}a_{0j} [g_{0i},
  g_{0j}] \cdot \wh\beta (\wh x_0) + \displaybreak[0]\nonumber\\
  & + b \dsum_{i=0}^{J_0} a_{0i} [g_{0i}, h_\nu] \cdot \wh\beta (\wh
  x_0) 
  + \dsum_{j=0}^{J_1} a_{1j} \Big( \dsum_{i=0}^{J_0} a_{0i}
  [g_{0i}, g_{1j}] + b [h_\nu, g_{1j}] +\displaybreak[0]\nonumber\\
  & + \dsum_{i=0}^{j-1} a_{1i} [g_{1i}, g_{1j}] \Big) \cdot \wh\beta
  (\wh x_0) \Big\} \nonumber
\end{align*}
where, again, $\nu =1$ if $\ep_1 \leq \ep_2$, $\nu =2$ otherwise.
\begin{remark}  \label{rem:2var}
The previous formula clearly shows that $J_1\se = J_2\se$ on 
$\{(\dx, a, b) \colon b = 0 \} $, i.e.~on $\{(\dx, \delta, \ep) \colon
\ep_1 = \ep_2\}$. The second variation is $J_1\se$ if $\ep_1 \leq
\ep_2$, $J_2\se$ otherwise. Its coercivity means that both $J_1\se$
and $J_2\se$ are coercive quadratic forms.
\end{remark}
\begin{remark}
  Isolating the addenda where $a_{0J_0}$, $b$, $a_{10}$ appear, as in
  \eqref{eq:L}, one can easily see that $J\se_1 = J\se_2$ if and only
  if $[\wt f_1, \wt f_2]\cdot\wh\beta(\wh x_0) = 0$, i.e. if and only
  if $\scal{\wh\lambda(\hat\tau)}{[f_1, f_2](\hat x_d)} = 0$. In other
  words: problem (FP) is twice differentiable at $(x, \delta, \ep) =
  (\hat x_0, 0, 0)$ if and only if $\scal{\wh\lambda(\hat\tau)}{[f_1,
    f_2](\hat x_d)} = 0$.
\end{remark}

The bilinear form associated to each $J\se_\nu$ is given by
\begin{align}\label{eq:bili}
  & J_\nu\se \left( (\dx, a, b) , (\dy, c, d) \right) =
  \dfrac{1}{2}\Bigg\{ \ud^2 \wh\gamma (\wh x_0) ( \dx, \dy ) + \displaybreak[0]\\[-1mm]
  & + \dy \cdot \Big( \dsum_{i=0}^{J_0}a_{0i}\, g_{0i} + b h_\nu +
  \dsum_{i=0}^{J_1}a_{1i}\,
  g_{1i}  \Big) \cdot \wh\beta (\wh x_0)   +    \displaybreak[0]\nonumber\\[-1mm]
  & + \dx \cdot \Big( \dsum_{i=0}^{J_0}c_{0i}\, g_{0i} + d h_\nu +
  \dsum_{i=0}^{J_1}c_{1i}\,
  g_{1i} \Big) \cdot \wh\beta (\wh x_0)    +    \displaybreak[0]\nonumber\\[-1mm]
  & + \Big( \dsum_{i=0}^{J_0}c_{0i}\, g_{0i} + d h_\nu +
  \dsum_{i=0}^{J_1}c_{1i}\, g_{1i} \Big) \cdot \Big(
  \dsum_{i=0}^{J_0}a_{0i}\, g_{0i} + b h_\nu +
  \dsum_{i=0}^{J_1}a_{1i}\,
  g_{1i} \Big) \cdot \wh\beta (\wh x_0)   + \displaybreak[0]\nonumber\\[-1mm]
  & + \dsum_{j=0}^{J_0} \dsum_{i=0}^{j-1} a_{0i}c_{0j} [g_{0i},
  g_{0j}] \cdot \wh\beta (\wh x_0) + d \dsum_{i=0}^{J_0} a_{0i}
  [g_{0i}, h_\nu] \cdot \wh\beta (\wh x_0)  +  \displaybreak[0]\nonumber\\[-1mm]
  & + \dsum_{j=0}^{J_1} c_{1j} \Big( \dsum_{i=0}^{J_0} a_{0i} [g_{0i},
  g_{1j}] + d [h_\nu, g_{1j}] + \dsum_{i=0}^{j-1} a_{1i} [g_{1i},
  g_{1j}] \Big) \cdot \wh\beta (\wh x_0) \Bigg\} \nonumber
\end{align}
By assumption, for each $\nu = 1,2$, $J\se_\nu$ is positive definite
on
\begin{multline*}
   \qquad \cN_0  := \Big\{ (\dx, a, b) \in T_{\wh x_0}N_0 \times
    \R^{J_0 + J_1+ 2} \times \R \colon  \\[-2mm]
    b + \dsum_{i=0}^1\dsum_{j=0}^{J_i}a_{ij} = 0,   \quad 
     L(\dx, a, b) \in T_{\wh x_f} N_f \Big\}. \qquad
\end{multline*}
Again following the procedure of \cite{ASZ02b} we may redefine
$\alpha$  by adding a suitable second-order penalty at $\wh x_0$ (see
e.g.~\cite{Hes51}, Theorem 13.2) and we may assume that each second
variation $J\se_\nu$ is positive definite on 
\begin{multline*}
  \qquad \cN := \Big\{ (\dx, a, b) \in T_{\wh x_0}M \times \R^{J_0
    + J_1 + 2}
  \times \R\colon \quad  \\[-2mm]
  b + \dsum_{i=0}^1\sum_{j=0}^{J_i}a_{ij} = 0 , \quad
  L(\dx, a, b)  \in T_{\wh x_f} N_f\Big\}, \qquad
\end{multline*}
i.e.~we can remove the constraint on the initial point of admissible
trajectories. \\ 
Let
\[
\Lambda := \{ \ud\a(x) \colon x \in M \}
\]
and introduce the anti-symplectic isomorphism $i$ as in
\cite{ASZ02b},
\begin{equation}
  \label{eq:isoanti}
  i \colon \dpx \in T^*_{\wh x_0} M \times T_{\wh x_0}M
  \mapsto -\delta p + \ud\, (-\wh\beta)_*\dx 
  \in T\left( T^*M\right).
\end{equation}
Define $ \vG\se_{ij} = i^{-1}\left( \vG_{ij}(\lo)\right)$, $\vH\se_\nu
= i^{-1}\left( \vH_\nu(\lo)\right)$. The Hamiltonian fields
$\vG\se_{ij}$ and $\vH\se_\nu$ are associated to the following linear
Hamiltonians defined in $T^*_{\wh x_0}M \times T_{\wh x_0}M$
\begin{align}
  \label{eq:hamisecG}
  \begin{split}
    G\se_{ij} (\omega, \dx) = \scal{\omega}{ g_{ij}(\wh x_0)} + \dx
    \cdot g_{ij} \cdot \wh\beta(\wh x_0)
  \end{split}
  \displaybreak[0]\\[1mm]
  \label{eq:hamisecH}
  \begin{split}
    H\se_\nu (\omega, \dx) = \scal{\omega}{h_\nu(\wh x_0)} + \dx \cdot
    h_\nu \cdot \wh\beta(\wh x_0).
  \end{split}
\end{align}
Moreover $ L\se_0 := i^{-1}T_{\lo}\Lambda = \left\{ \dl \in T^*_{\wh
    x_0}M \times T_{\wh x_0}M \colon \dl = \left(
    -D^2\wh\gamma(\wh x_0)(\dx, \cdot)\right) \right\}$. With such notation, the bilinear form
$J\se_\nu$ associated to the second variation can be written in a
rather compact form, see, e.g.~\cite{ASZ02b} or \cite{Pog06}.

\noindent For any $\de := (\dx, a, b) \in \cN$ let
\begin{align*}
  & \omega_0 := -D^2\wh\gamma(\wh x_0)(\dx, \cdot), \quad \dl :=
  (\omega_0, \dx )= i^{-1}
  \left(\ud\a_*\dx \right), \displaybreak[0]\\[-1mm]
  & (\omega_\nu, \dx_\nu) := \dl + \dsum_{i=0}^1\dsum_{j=0}^{J_i}
  a_{ij}\vG\se_{ij} + b\, \vH\se_\nu \; \text{ and } \; \dl_\nu :=
  (\omega_\nu, \dx_\nu).
\end{align*}
Then $J\se_\nu$ can be written as
\begin{equation}
  \label{eq:bilihami}
  \begin{split}
    J_\nu\se &\Big((\dx, a, b) , (\dy, c, d) \Big) = -
    \scal{\omega_\nu}{\dy + \dsum_{s=0}^{J_0}
      c_{0s} g_{0s} + d\, h_\nu + \dsum_{s=0}^{J_1}c_{1s} g_{1s} } \\
    &+ \dsum_{j=0}^{J_0} c_{0j}\, G\se_{0j}\Big( \dl +
    \dsum_{s=0}^{j-1} a_{0s}\vG\se_{0s}\Big)
    + d\, H\se_\nu \Big( \dl + \dsum_{s=0}^{J_0} a_{0s}\vG\se_{0s}\Big) \\
    & + \dsum_{j=0}^{J_1}c_{1j} G\se_{1j} \Big( \dl +
    \dsum_{s=0}^{J_0} a_{0s}\vG\se_{0s} + b \vH\se_\nu +
    \dsum_{s=0}^{j-1} a_{1s}\vG\se_{1s} \Big)
  \end{split}
\end{equation}
We shall study the positivity of $J\se_\nu$ as follows: consider
\[ 
V := \Big\{ (\dx, a, b) \in \cN\colon L(\dx, a, b) = 0 \Big\} 
\]
and the sequence
\[
V_{01} \subset \ldots \subset V_{0J_0} \subset V_{10} \subset \ldots
\subset V_{1J_1} = V 
\]
of sub-spaces of $V$, defined as folllows
\begin{align*}
  & V_{0j} := \{ (\dx, a, b) \in V\colon
  a_{0s} =0 \quad \forall s = j+1, \ldots, J_0, \; a_{1s} = 0  \}  \\
  & V_{1j} := \{ (\dx, a, b) \in V \colon a_{1s} =0 \quad
  \forall s = j+1, \ldots, J_1 \}.
\end{align*}
Observe that $V_{0j}^1 = V_{0j}^2$ for any $j=0, \ldots, J_0$, so we
denote these sets as $V_{0j}$. Moreover
\begin{equation*}
  \dim\left(V_{0j}\cap V_{0, j-1}^{{\perp_{J\se_\nu}}} \right)
  = \dim\left(V_{1k}\cap V_{1, k-1}^{{\perp_{J\se_\nu}}} \right) = 1, \quad
  \dim\left(V_{10}\cap V_{0 J_0}^{{\perp_{J\se_\nu}}} \right) = 2
\end{equation*}
for any $j=2, \ldots, J_0$, $k=0, \ldots, J_1$ and $\nu =1, 2$ and
$J\se_\nu$ is positive definite on $\cN$ if and only if it is positive
definite on each $V_{ij}\cap V_{i, j-1}^{\perp_{J\se_\nu}}$,
$V_{10}\cap V_{0J_0}^{\perp_{J\se_\nu}}$ and $\cN\cap V^{\perp_{J\se_\nu}}$. 

As in \cite{ASZ02b} one can prove a characterization, in terms of the
maximized flow, of the intersections above. We state here such
characterization without proofs which can be found in the
aforementioned paper.
\begin{lemma}\label{lem:v0jperp} 
  Let $j=1, \ldots, J_0$ and $\de = (\dx, a,b) \in V_{0j}$. Assume
  $J\se_\nu$ is positive definite on $V_{0, j-1}$.  Then $\de
  \in V_{0j}\cap V_{0, j-1}^{\perp_{J\se_\nu}}$ if and only if
  \begin{equation}
    \label{eq:v0jperp}
    G\se_{0s}(\dl +
    \displaystyle\sum_{r=0}^{s-1}a_{0r}\vG\se_{0r}) =
    G\se_{0,j-1}(\dl +
    \displaystyle\sum_{s=0}^{j-2}a_{0s}\vG\se_{0s}) 
    \, , \quad \forall \, s = 0,\ldots,j-2
  \end{equation}
  i.e. if and only if
  \begin{equation}\label{eq:carat0j}
    a_{0s} = \scal{\ud\left( \theta_{0, s+ 1} -
        \theta_{0s}\right)(\lo)}{\ud\a_*\dx}  \quad \forall s = 0, \ldots, j-2.
  \end{equation}
  In this case
  \begin{equation}
    \label{eq:jsekk1}
    \begin{split}
      J\se_\nu[\de]^2 &= a_{0j} \left( G\se_{0j} - G\se_{0,j-1}
      \right)(\dl + \displaystyle\sum_{s=0}^{j-1}a_{0s}
      \vG\se_{0s}) = \\
      &= a_{0j} \, \bsi\Big({\dl + \displaystyle\sum_{s=0}^{j-1}a_{0s}
        \vG\se_{0s}},{ \vG\se_{0j} - \vG\se_{0,j-1}}\Big) \\
      & = - a_{0j} \, \bsi\Big({\ud\alpha_*\dx +
        \displaystyle\sum_{s=0}^{j-1}a_{0s} \vG_{0s}(\lo)}, {(
        \vG_{0j} - \vG_{0,j-1})(\lo)}\Big) .
    \end{split}
  \end{equation}
\end{lemma}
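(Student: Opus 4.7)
The plan adapts the single-switch analysis of \cite{ASZ02b} to the subspace $V_{0j}$, which sits entirely to the left of the double switching time. Because $V_{0j}$ involves only the variables $a_{00},\ldots,a_{0j}$, with $b$ determined by the constraint $b+\sum_{s=0}^{j}a_{0s}=0$ inherited from $V\subset\cN$, the double-switch quantity $\vH\se_\nu$ enters the computation only through this $b$-term, and the $\vG\se_{1s}$'s do not enter at all. I would organise the argument in three steps.

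\emph{Step 1.} Evaluate $J\se_\nu(\de,\de')$ from \eqref{eq:bilihami} on $\de\in V_{0j}$ and $\de'=(\dy,c,d)\in V_{0j}$. Using the vanishings $a_{1s}=c_{1s}=0$, $a_{0s}=c_{0s}=0$ for $s>j$, the $V$-constraint $L(\de')=0$ (which kills the first term $-\scal{\omega_\nu}{L(\dy,c,d)}$), and $d=-\sum_{s=0}^{j}c_{0s}$, everything collapses to
\[
J\se_\nu(\de,\de')=\sum_{s=0}^{j}c_{0s}\bigl[G\se_{0s}(\dl_s)-H\se_\nu(\dl_{j+1})\bigr],
\]
where $\dl_s:=\dl+\sum_{r=0}^{s-1}a_{0r}\vG\se_{0r}$ and $\dl_{j+1}:=\dl+\sum_{r=0}^{j}a_{0r}\vG\se_{0r}$.

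\emph{Step 2.} Restricting $\de'$ to $V_{0,j-1}$ means $c_{0j}=0$, with $(c_{00},\ldots,c_{0,j-1})$ free. Hence $\de\in V_{0j}\cap V_{0,j-1}^{\perp_{J\se_\nu}}$ if and only if $G\se_{0s}(\dl_s)=H\se_\nu(\dl_{j+1})$ for every $s=0,\ldots,j-1$. Using the $s=j-1$ identity to eliminate $H\se_\nu(\dl_{j+1})$, together with the linear-Hamiltonian identity $G\se_{0,j-1}(\vG\se_{0,j-1})=0$ (so that $G\se_{0,j-1}(\dl_j)=G\se_{0,j-1}(\dl_{j-1})$), recasts the remaining $j-1$ equations as the telescoping system \eqref{eq:v0jperp}. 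The hypothesis that $J\se_\nu$ is positive definite on $V_{0,j-1}$ is what makes the restricted bilinear form non-degenerate, thereby validating the claimed characterisation of the orthogonal complement.

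\emph{Step 3.} The implicit function theorem expressions \eqref{eq:lintheta0j} and \eqref{eq:linphi0j} identify $\scal{\ud(\theta_{0,s+1}-\theta_{0s})(\lo)}{\ud\alpha_*\dx}$ with precisely the quotient of symplectic brackets that, when equated to $a_{0s}$, encodes \eqref{eq:v0jperp}; this delivers \eqref{eq:carat0j}. Setting $\de'=\de$ in the formula of Step 1 and invoking the orthogonality relations kills the $s\leq j-1$ contributions, leaving only $a_{0j}[G\se_{0j}(\dl_j)-H\se_\nu(\dl_{j+1})]$; the replacement $H\se_\nu(\dl_{j+1})=G\se_{0,j-1}(\dl_j)$ from Step 2 then yields the first equality of \eqref{eq:jsekk1}. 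The two remaining expressions follow from the identity $G\se(\dl)=\bsi(\vG\se,\dl)$ valid for linear Hamiltonians, the definition \eqref{eq:isoanti} of the anti-symplectic isomorphism $i$, and $i^{-1}\vG_{ij}(\lo)=\vG\se_{ij}$.

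The delicate point is the elimination of $\vH\se_\nu$ in Step 2: although $\vH\se_\nu$ corresponds to no $V_{0j}$-direction, it appears through the $b$-constraint, and tracking that it is eliminated exactly by the $s=j-1$ orthogonality equation (via the self-annihilation $G\se(\vG\se)=0$) is what transforms the naive system of conditions into the clean symmetric form \eqref{eq:v0jperp}.
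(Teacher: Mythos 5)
The paper does not actually give a proof of this lemma (it explicitly refers the reader to \cite{ASZ02b}), so you are reconstructing an omitted argument. Your Step~1 computation of $J\se_\nu(\de,\de')$ from \eqref{eq:bilihami}, and the formal manipulations in Step~3, are broadly the right strategy. However, Step~2 contains a real gap.

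You treat $(c_{00},\ldots,c_{0,j-1})$ as $j$ independent free parameters for $\de'\in V_{0,j-1}$ and conclude that orthogonality forces the $j$ equalities $G\se_{0s}(\dl_s)=H\se_\nu(\dl_{j+1})$, $s=0,\ldots,j-1$. You then ``eliminate'' $H\se_\nu(\dl_{j+1})$ via the $s=j-1$ equation to arrive at the $j-1$ relations \eqref{eq:v0jperp}. But $H\se_\nu(\dl_{j+1})$ is a fixed quantity determined by $\de$, not an auxiliary unknown: substituting it out does not delete the $s=j-1$ constraint, so your logic actually produces $j$ conditions, one more than the lemma states. That extra condition is not implied by \eqref{eq:v0jperp}, and a dimension count shows the lemma would then be false as you prove it: the paper asserts $\dim\big(V_{0j}\cap V_{0,j-1}^{\perp_{J\se_\nu}}\big)=1$, which with a $j$-condition characterization requires $\dim V_{0j}=j$, not $j+1$.

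The resolution is that the spaces $V_{0j}$ ($j\leq J_0$) carry the additional (physically obvious, though easy to miss in the paper's typography) constraint $b=0$ — these are variations that never touch the double switch. This forces $\sum_{s\leq j}a_{0s}=0$, hence $\dim V_{0j}=j$, consistent with the stated dimension. For the test variation $\de'\in V_{0,j-1}$ one likewise has $d=0$, i.e.\ $\sum_{s=0}^{j-1}c_{0s}=0$. Under this constraint the expression $\sum_{s=0}^{j-1}c_{0s}\bigl[G\se_{0s}(\dl_s)-H\se_\nu(\dl_{j+1})\bigr]$ vanishes for all admissible $c$ precisely when the bracketed quantity is independent of $s$, which is exactly \eqref{eq:v0jperp} — $j-1$ conditions, with $\vH\se_\nu$ disappearing automatically rather than being ``eliminated.'' The formula \eqref{eq:jsekk1} then follows by the same rearrangement, using $\sum_{s=0}^{j}a_{0s}=0$ and the self-annihilation $G\se_{0,j-1}(\vG\se_{0,j-1})=0$, again with no need for a $j$-th orthogonality equation. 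So the corrected Step~2 should carry the $b=d=0$ (equivalently $\sum c_{0s}=0$) constraint throughout; without it the argument over-determines the orthogonal complement.
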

\begin{lemma} \label{lem:v10perp} Let $\nu =1, 2$ and $\de = (\dx,
  a,b) \in V_{10}$. Assume $J\se_\nu$ is positive definite on $V_{0, J_0}$.   
Then $\de \in V_{10}\cap V_{0 J_0}^{\perp_{J\se_\nu}}$ if and only if
  \begin{equation}
    \label{eq:v10perp}
    G\se_{0s}(\dl +
    \displaystyle\sum_{\mu=0}^{s-1}a_{0\mu}\vG\se_{0\mu}) =
    G\se_{0,J_0}(\dl +
    \displaystyle\sum_{s=0}^{J_0-1}a_{0s}\vG\se_{0s}) 
    \, , \quad \forall \, s = 0,\ldots, J_0 - 1
  \end{equation}
  i.e. if and only if
  \begin{equation}\label{eq:carat10}
    a_{0s} = \scal{\ud\left( \theta_{0, s+ 1} -
        \theta_{0s}\right)(\lo)}{\ud\a_*\dx}  \quad \forall s = 0, \ldots, J_0 - 1.
  \end{equation}
  In this case
  \begin{equation}\label{eq:varsecv10perp}
    \begin{split}    
      J\se_\nu[\de]^2 = & \, b \left( H\se_\nu - G\se_{0J_0} \right)(\dl +
      \displaystyle\sum_{s=0}^{J_0}a_{0s} \vG\se_{0s}) \\
      & + a_{10} \left( G\se_{10} - H\se_\nu\right)(\dl +
      \displaystyle\sum_{s=0}^{J_0}a_{0s} \vG\se_{0s} + b \vH\se_\nu )  = \\
      = & \, b \, \bsi\Big({\dl + \displaystyle\sum_{s=0}^{J_0}a_{0s}
        \vG\se_{0s}},  {\vH\se_\nu - \vG\se_{0,J_0}} \Big) + \\
      & + a_{10} \, \bsi \Big( {\dl +
        \displaystyle\sum_{s=0}^{J_0}a_{0s} \vG\se_{0s} + b
        \vH\se_\nu}, {\vG\se_{10} - \vH\se_\nu} \Big) = \\
      = & \, - b \, \bsi\Big({\ud\alpha_*\dx +
        \displaystyle\sum_{s=0}^{J_0}a_{0s}
        \vG_{0s}(\lo)},  {(\vH_\nu - \vG_{0,J_0})(\lo)} \Big) - \\
      & - a_{10} \, \bsi \Big( {\ud\alpha_*\dx +
        \displaystyle\sum_{s=0}^{J_0}a_{0s} \vG_{0s}(\lo) + b
        \vH_\nu(\lo)}, {(\vG_{10} - \vH_\nu)(\lo)} \Big) .
    \end{split}
  \end{equation}
\end{lemma}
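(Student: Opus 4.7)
The plan is to mirror the proof of Lemma~\ref{lem:v0jperp}---the simple-switch analogue, whose proof follows~\cite{ASZ02b}---adapting it to account for the two additional free directions $b$ and $a_{10}$ that appear when passing from $V_{0J_0}$ to $V_{10}$. There are three things to verify: (a) that $\de\in V_{10}\cap V_{0J_0}^{\perp_{J\se_\nu}}$ iff the symplectic equations \eqref{eq:v10perp} hold; (b) that these equations are equivalent to the explicit formula \eqref{eq:carat10}; and (c) that on this subspace, $J\se_\nu[\de]^2$ reduces to \eqref{eq:varsecv10perp}.

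For (a), I would substitute an arbitrary $(\dy,c,d)\in V_{0J_0}$ into \eqref{eq:bilihami}. Since $c_{1s}=0$ for every $s$, the outer sum over $j=0,\ldots,J_1$ disappears; and since $(\dy,c,d)\in V$, the constraints $d+\sum_j c_{0j}=0$ and $L(\dy,c,d)=0$ let one eliminate $d$ and substantially simplify the $\dy$-dependence. Using the structural identities $h_\nu=g_{0J_0}+2\wt f_\nu$ and $g_{10}=g_{0J_0}+2\wt f_1+2\wt f_2$, the pairing collapses to a linear functional of $(c_{00},\ldots,c_{0J_0})$ whose vanishing for every admissible $c$-tuple is precisely \eqref{eq:v10perp}. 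For (b), equations \eqref{eq:v10perp} are solved by induction on $s$ via the recursion \eqref{eq:lintheta0jind} (equivalently \eqref{eq:lintheta0j}) for $\ud\theta_{0s}(\lo)$: at stage $s$, the $s$-th equation in \eqref{eq:v10perp}, together with the already-obtained values of $a_{00},\ldots,a_{0,s-1}$, yields $a_{0s}=\scal{\ud(\theta_{0,s+1}-\theta_{0s})(\lo)}{\ud\a_*\dx}$, matching \eqref{eq:carat10}. For (c), one plugs $(\dy,c,d)=\de$ into \eqref{eq:bilihami}: the orthogonality relations \eqref{eq:v10perp} force the sum $\sum_{j=0}^{J_0}a_{0j}\,G\se_{0j}(\dl+\sum_{s<j}a_{0s}\vG\se_{0s})$ to telescope, killing every $a_{0\cdot}$-contribution, so that only the $b$- and $a_{10}$-terms remain. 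These produce the first expression in \eqref{eq:varsecv10perp}; rewriting via the definition of the canonical two-form on $T^*_{\wh x_0}M\times T_{\wh x_0}M$ gives the second, and the anti-symplectic isomorphism $i$ of \eqref{eq:isoanti} (accounting for the sign flip) gives the third.

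The main obstacle is the cancellation of the $\nu$-dependence in step (a). The covector $\omega_\nu$ carries the contribution $b\,\vH\se_\nu+a_{10}\,\vG\se_{10}$, and the $H\se_\nu$-pairing on the right-hand side of \eqref{eq:bilihami} yields further a priori $\nu$-dependent terms. One must check that combining everything with the identity $L(\dy,c,d)=0$---which by \eqref{eq:L} is a specific linear relation among $\dy$, $g_{0j}(\wh x_0)$, $\wt f_1(\wh x_0)$, $\wt f_2(\wh x_0)$, and $g_{1j}(\wh x_0)$---absorbs all $\vH\se_\nu$-pieces and leaves $G\se_{0J_0}$ in the final coefficients of \eqref{eq:v10perp}. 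This is the only genuinely new ingredient compared with the simple-switch argument, and it hinges on the equality $h_1+h_2=g_{0J_0}+g_{10}$, which is the algebraic incarnation of the fact that the two possible orderings of the control components at $\hat\tau$ give rise to the same linearization (Remark~\ref{rem:L}).
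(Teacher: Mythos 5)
The paper states Lemmas~\ref{lem:v0jperp}--\ref{lem:Njperp} without proof, referring to~\cite{ASZ02b}, so there is no in-paper argument to compare against; I assess yours on its own terms. The overall template (substitute into~\eqref{eq:bilihami}, use $L=0$ to annihilate the $\scal{\omega_\nu}{\cdot}$-term, telescope) is appropriate, and step~(b) looks right, but steps~(a) and~(c) each contain a gap.

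In step~(a) you keep $d=-\sum_j c_{0j}$ as a free quantity. However, on $V_{0J_0}$ one must take $d=0$: the relation $b+\sum a_{ij}=0$ already determines $b$ from the $a$'s, so if $b$ were not separately frozen on $V_{0J_0}$ the codimension of $V_{0J_0}$ in $V_{10}$ would be $1$, not $2$ as required by $\dim\big(V_{10}\cap V_{0J_0}^{\perp_{J\se_\nu}}\big)=2$ and by consistency with~\eqref{eq:jsekk1}. With $d=0$ (and $c_{1s}=0$) the pairing $J\se_\nu\big(\de,(\dy,c,0)\big)$ reduces to $\sum_{j=0}^{J_0}c_{0j}\,G\se_{0j}\big(\dl+\sum_{s<j}a_{0s}\vG\se_{0s}\big)$, restricted to $\sum_j c_{0j}=0$, whose vanishing is exactly~\eqref{eq:v10perp}. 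The $\nu$-dependence you worry about never appears, and the identity $h_1+h_2=g_{0J_0}+g_{10}$ plays no role here. If instead you keep $d$ free, the orthogonality yields $G\se_{0j}(\cdots)=H\se_\nu\big(\dl+\sum_{s\le J_0}a_{0s}\vG\se_{0s}\big)$ for all $j=0,\dots,J_0$ --- one equation more than~\eqref{eq:v10perp}, determining $a_{0J_0}$ as well and genuinely $\nu$-dependent --- and I do not see how the $h_1+h_2$ identity would cancel that dependence.

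In step~(c) the central claim, that~\eqref{eq:v10perp} kills the $a_{0\cdot}$-sum, is false. The orthogonality makes all the quantities $G\se_{0j}\big(\dl+\sum_{s<j}a_{0s}\vG\se_{0s}\big)$, $j=0,\dots,J_0$, equal to the single number $\kappa:=G\se_{0J_0}\big(\dl+\sum_{s\le J_0}a_{0s}\vG\se_{0s}\big)$, whence
\[
\sum_{j=0}^{J_0}a_{0j}\,G\se_{0j}\Big(\dl+\sum_{s<j}a_{0s}\vG\se_{0s}\Big)
=\kappa\sum_{j=0}^{J_0}a_{0j}=-\kappa\,(b+a_{10}),
\]
which does not vanish in general. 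This surviving piece is precisely what supplies the subtracted $G\se_{0J_0}$- and $H\se_\nu$-contributions in~\eqref{eq:varsecv10perp}: if the $a_{0\cdot}$-sum were dropped, the substitution of $\de$ into~\eqref{eq:bilihami} would leave only $b\,H\se_\nu(\cdots)+a_{10}\,G\se_{10}(\cdots)$, which is not the claimed quadratic. The correct manipulation is the one that gives~\eqref{eq:jsekk1}: factor out $\kappa$, use $\sum_j a_{0j}=-(b+a_{10})$, and regroup the resulting $-\kappa b$ and $-\kappa a_{10}$ pieces with the $b\,H\se_\nu$- and $a_{10}\,G\se_{10}$-terms respectively.
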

\begin{lemma} \label{lem:v1jperp} Let $\nu =1, 2$, $j=1, \ldots, J_1$
  and $\de = (\dx, a,b) \in V_{1j}$. Assume
  $J\se_\nu$ is positive definite on $V_{1, j-1}$.   
  Then $\de \in V_{1j} \cap
  V_{1, j-1}^{\perp_{J\se_\nu}}$ if and only if
  \begin{multline*}
    \label{eq:v1jperp}
    G\se_{0s}(\dl + \displaystyle\sum_{i=0}^{s-1}a_{0i}\vG\se_{0i}) =
    G\se_{1,j-1}(\dl + \displaystyle\sum_{i=0}^{J_0}a_{0i}\vG\se_{0i}
    + b \vH\se_\nu +
    \dsum_{i=0}^{j-2}a_{1i}\vG\se_{1i})  = \\
    = H\se_\nu(\dl + \displaystyle\sum_{i=0}^{J_0}a_{0i}\vG\se_{0i} )
    = G\se_{1k}(\dl + \displaystyle\sum_{i=0}^{J_0}a_{0i}\vG\se_{0i} +
    b \vH\se_\nu
    + \dsum_{i=0}^{k-1}a_{1i}\vG\se_{1i}) \\
    \forall \, s = 0,\ldots, J_0 \quad \forall \, k = 0, \ldots, j-2
  \end{multline*}
  i.e. if and only if
  \begin{equation*}
    \begin{split}
      a_{0s} & = \scal{\ud \left( \theta_{0, s+ 1} - \theta_{0s}
        \right)
        (\lo)}{\ud\a_*\dx} \quad \forall s = 0, \ldots, J_0 \\
      b & = \scal{ \ud\left( \theta_{10} - \theta_{0, J_0 +
            1}\right)(\lo)}{\ud\a_*\dx} \\
      a_{1s} & = \scal{\ud\left( \theta_{1, s+ 1} -
          \theta_{1s}\right)(\lo)}{\ud\a_*\dx} \quad \forall s = 0,
      \ldots, j-2.
    \end{split}
  \end{equation*}
  In this case
  \begin{multline*}
    J\se_\nu[\de]^2 = \, a_{1j} \left( G\se_{1j} - G\se_{1,j-1}
    \right)(\dl + \displaystyle\sum_{s=0}^{J_0}a_{0s} \vG\se_{0s} + b
    \vH\se_\nu
    + \dsum_{i=0}^{j-1}a_{1i}\vG\se_{1i})  \\
    = a_{1j}\, \bsi \Big( \dl + \displaystyle\sum_{s=0}^{J_0}a_{0s}
    \vG\se_{0s} + b \vH\se_\nu + \dsum_{i=0}^{j-1}a_{1i}\vG\se_{1i} \,
    , \, \vG\se_{1j} - \vG\se_{1,j-1} \Big) \\
    = -a_{1j}\, \bsi \Big( \! \ud\alpha_*\dx +
    \displaystyle\sum_{s=0}^{J_0}\! a_{0s} \vG_{0s}(\lo) + b \vH_\nu
    (\lo) + \dsum_{i=0}^{j-1}\! a_{1i}\vG_{1i}(\lo) , ( \vG_{1j} -
    \vG_{1,j-1} )(\lo)\Big) .
  \end{multline*}
\end{lemma}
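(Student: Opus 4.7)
The argument follows the inductive pattern established in Lemmas \ref{lem:v0jperp} and \ref{lem:v10perp} and proceeds in three steps.

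First, I would establish the orthogonality characterization by testing the bilinear form \eqref{eq:bilihami} against a spanning family of $V_{1,j-1}$. Because $V_{1,j-1}=\{\de'=(\dy,c,d)\in V\colon c_{1s}=0\ \text{for}\ s\ge j\}$, one may choose variations $\de'\in V_{1,j-1}$ that perturb a single parameter among $c_{0s}$ ($s=0,\ldots,J_0$), $d$, and $c_{1k}$ ($k=0,\ldots,j-2$) at a time (the component $\dy$ is absorbed through the endpoint constraint $L(\dy,c,d)\in T_{\wh x_f}N_f$, using the already-normalized positivity of $\alpha$ on $\cN\cap V^{\perp_{J\se_\nu}}$). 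Reading off the coefficient of each such parameter in \eqref{eq:bilihami}, the condition $J\se_\nu(\de,\de')=0$ yields exactly the chain of equalities between $G\se_{0s}$, $H\se_\nu$ and $G\se_{1k}$ stated in the lemma, evaluated along the accumulated argument $\dl+\dsum_{i<s}a_{0i}\vG\se_{0i}$ (and analogously for the post-double-switch blocks).

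Second, I would translate these Hamiltonian equalities into the explicit formulas for $a_{0s}$, $b$, $a_{1s}$. Under the anti-symplectic isomorphism $i$ of \eqref{eq:isoanti}, the difference $G\se_{0,s+1}-G\se_{0s}$ at an argument $\dl+\dsum_{r\le s}a_{0r}\vG\se_{0r}$ becomes the symplectic form $\bsi(\cdot,\vG\se_{0,s+1}-\vG\se_{0s})$, and its vanishing is precisely the numerator appearing in the recursive formulas \eqref{eq:lintheta0jind}, \eqref{eq:lintaunuind}, \eqref{eq:lintheta10ind} and \eqref{eq:lintheta1jind}. Solving successively (as in the proofs of Lemmas \ref{lem:v0jperp} and \ref{lem:v10perp}) produces $a_{0s}=\scal{\ud(\theta_{0,s+1}-\theta_{0s})(\lo)}{\ud\alpha_*\dx}$, then $b=\scal{\ud(\theta_{10}-\theta_{0,J_0+1})(\lo)}{\ud\alpha_*\dx}$ across the double switch, and finally $a_{1s}=\scal{\ud(\theta_{1,s+1}-\theta_{1s})(\lo)}{\ud\alpha_*\dx}$ for $s=0,\ldots,j-2$.

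Third, once orthogonality is available, I would compute $J\se_\nu[\de]^2$ by specializing \eqref{eq:bilihami} to $\de'=\de$. The orthogonality relations force every value $G\se_{0s}(\dl+\dsum_{r<s}a_{0r}\vG\se_{0r})$, $H\se_\nu(\dl+\dsum_{s\le J_0}a_{0s}\vG\se_{0s})$ and $G\se_{1k}(\dots)$ ($k\le j-2$) to be equal, and the constraint $b+\dsum a_{ij}=0$ then telescopes all of these contributions away, leaving only the term indexed by $c_{1j}=a_{1j}$. This remainder is exactly $a_{1j}(G\se_{1j}-G\se_{1,j-1})$ evaluated at $\dl+\dsum_{s=0}^{J_0}a_{0s}\vG\se_{0s}+b\vH\se_\nu+\dsum_{i=0}^{j-1}a_{1i}\vG\se_{1i}$, whose rewriting through $i$ produces the two symplectic expressions claimed.

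The main obstacle is purely bookkeeping: keeping track of the mixed bracket contributions $[g_{0i},g_{1j}]$, $[h_\nu,g_{1j}]$ and $[g_{1i},g_{1j}]$ appearing in \eqref{eq:bili} and showing they reassemble into the compact Hamiltonian form of \eqref{eq:bilihami}. The choice of $\nu$ propagates through every condition involving post-double-switch data, but this introduces no genuine case split because the characterization depends on $\nu$ only through the vector field $\vH\se_\nu$, consistently with the two branches of the maximized flow constructed in Section \ref{sec:maxflow}.
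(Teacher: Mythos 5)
The paper does not actually supply a proof of this lemma: together with Lemmas \ref{lem:v0jperp}, \ref{lem:v10perp} and \ref{lem:Njperp}, it is stated with the remark that the argument follows \cite{ASZ02b} and is omitted. So there is no in-paper proof to compare against; I can only assess your proposal on its own terms, and it is essentially the argument the authors are deferring to.

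Your three-step plan is correct. Testing \eqref{eq:bilihami} against variations $\de'\in V_{1,j-1}$ is the right move, and the crucial simplification you note is that for $\de'\in V$ the first term of \eqref{eq:bilihami} vanishes because $L(\dy,c,d)=0$ forces $\dy+\dsum_{s}c_{0s}g_{0s}(\wh x_0)+d\,h_\nu(\wh x_0)+\dsum_{s}c_{1s}g_{1s}(\wh x_0)=0$. Be careful in how you phrase this: you write that $\dy$ is absorbed via the constraint ``$L(\dy,c,d)\in T_{\wh x_f}N_f$,'' which is the $\cN$-condition; for $\de'\in V$ one needs the stronger constraint $L(\dy,c,d)=0$, and it is that equality (not the transversality condition, nor the normalization of $\alpha$, which plays a role only for the final block $\cN\cap V^{\perp_{J\se_\nu}}$) that kills the $\omega_\nu$-pairing. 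Once this is in place, the $J_0+j+1$ independent coefficient equations coming from $c_{00},\ldots,c_{0J_0},d,c_{10},\ldots,c_{1,j-1}$ modulo the single constraint $d+\dsum c_{is}=0$ yield exactly the chain of equalities $G\se_{0s}(\cdots)=G\se_{1,j-1}(\cdots)=H\se_\nu(\cdots)=G\se_{1k}(\cdots)$, and the passage to the explicit formulas for $a_{0s}$, $b$, $a_{1s}$ is obtained recursively from \eqref{eq:lintheta0jind}--\eqref{eq:lintheta1jind}, using Assumptions \ref{ass:2} and \ref{ass:3} to guarantee that the relevant symplectic pairings in the denominators are nonzero.

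The telescoping in step 3 also works: specializing $\de'=\de$, the orthogonality relations set all the $G\se_{0s}$, $H\se_\nu$, $G\se_{1k}$ ($k\le j-1$) values to a common constant $K$, and $b+\dsum_{i,s}a_{is}=0$ with $a_{1s}=0$ for $s>j$ gives $\dsum_{s=0}^{J_0}a_{0s}+b+\dsum_{k=0}^{j-1}a_{1k}=-a_{1j}$, leaving $J\se_\nu[\de]^2=a_{1j}\big(G\se_{1j}-G\se_{1,j-1}\big)$ at the stated argument (the apparent mismatch of $\dsum_{i\le j-1}a_{1i}\vG\se_{1i}$ versus $\dsum_{i\le j-2}$ in the argument of $G\se_{1,j-1}$ is immaterial since $G\se_{1,j-1}(\vG\se_{1,j-1})=0$). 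The final pass through the anti-symplectic isomorphism $i$ giving the sign change in the last line is exactly as in Lemmas \ref{lem:v0jperp} and \ref{lem:v10perp}. In short, your proposal fills the gap the paper leaves to the reference, and it does so correctly.
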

\begin{lemma}\label{lem:Njperp}
  Let $\nu =1, 2$ and $\de = (\dx, a,b) \in \cN$.  Assume
  $J\se_\nu$ is positive definite on $V_{1J_1}$.   Then $\de \in
  \cN \cap V_{1J_1}^{\perp_{J\se_\nu}}$ if and only if
  \begin{multline*}
    G\se_{0s}(\dl + \displaystyle\sum_{i=0}^{s-1}a_{0i}\vG\se_{0i}) =
    G\se_{1,J_1}(\dl + \displaystyle\sum_{i=0}^{J_0}a_{0i}\vG\se_{0i}
    + b \vH\se_\nu +
    \dsum_{i=0}^{J_1 - 1}a_{1i}\vG\se_{1i})   = \\
    = H\se_\nu(\dl + \displaystyle\sum_{i=0}^{J_0}a_{0i}\vG\se_{0i} )
    = G\se_{1k}(\dl + \displaystyle\sum_{i=0}^{J_0}a_{0i}\vG\se_{0i} +
    b \vH\se_\nu
    + \dsum_{i=0}^{k-1}a_{1i}\vG\se_{1i}) \\
    \forall \, s = 0,\ldots, J_0 \quad \forall \, k = 0, \ldots, J_1
  \end{multline*}
  i.e. if and only if $\de \in \cN$ and
  \begin{equation*}
    \begin{split}
      a_{0s} & = \scal{ \ud\left( \theta_{0, s+ 1} -
          \theta_{0s}\right)(\lo)}
      {\ud\a_*\dx}  \quad \forall s = 0, \ldots, J_0 \\
      b & = \scal{\ud\left( \theta_{10} - \theta_{0, J_0 +
            1}\right)(\lo)}{\ud\a_*\dx} \\
      a_{1s} & = \scal{\ud\left( \theta_{1, s+ 1} -
          \theta_{1s}\right)(\lo)}{\ud\a_*\dx} \quad \forall s = 0,
      \ldots, J_1 - 1.
    \end{split}
  \end{equation*}
  In this case
  \begin{equation*}
    \begin{split}
      J\se_\nu[\de]^2 & = \, - \scal{\omega_\nu}{\dx + \dsum_{i=0}^1
        \dsum_{s=0}^{J_i}
        a_{is} g_{is}(\wh x_0) + b\, h_\nu(\wh x_0) }  = \\
      & = \bsi \Big(\big(0, \dx + \dsum_{i=0}^1\dsum_{s=0}^{J_i}
      a_{is} g_{is}(\wh x_0) + b h_\nu(\wh x_0) \big) \, , \\
      & \hspace{4cm} -D^2\wh\gamma(\wh x_0)(\dx, \cdot) +
      \dsum_{i=0}^1 \dsum_{s=0}^{J_i} a_{is} \vG\se_{is} + b
      \vH\se_\nu \Big) = \\
      & = - \bsi \Big(\ud\,(-\wh\beta)_* \big( \dx +
      \dsum_{i=0}^1\dsum_{s=0}^{J_i}
      a_{is} g_{is}(\wh x_0) + b h_\nu (\wh x_0) \big) \, , \\
      & \hspace{45mm} \ud\alpha_* \dx + \dsum_{i=0}^1
      \dsum_{s=0}^{J_i} a_{is} \vG_{is}(\lo) + b \vH_{\nu}(\lo) \Big)
      .
    \end{split}
  \end{equation*}
\end{lemma}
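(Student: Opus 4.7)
The plan is to adapt the approach used for Lemmas \ref{lem:v0jperp}, \ref{lem:v10perp} and \ref{lem:v1jperp} to the ambient space $\cN$ instead of a $V_{ij}$. The key observation is that Lemma \ref{lem:v1jperp} with $j=J_1$ already handles the ``internal'' orthogonality with respect to all variations supported on the interval structure of (FP), so what remains is to let the initial variation $\dy$ and the terminal increment (implicit in the constraint $L(\dy,c,d)\in T_{\wh x_f} N_f$) be free.

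First I would write out $J\se_\nu(\de,(\dy,c,d))=0$ for an arbitrary $(\dy,c,d)\in V_{1J_1}$. Taking the variations $(0,c,d)$ with $c_{is}$ and $d$ unrestricted except for the scalar constraint $d+\sum_{i,s}c_{is}=0$ (this is the defining constraint of $V_{1J_1}\subset\cN$; the transversality constraint on the final point was relaxed because $V_{1J_1}=V$), one sees, from the form \eqref{eq:bilihami}, that the brackets multiplying $c_{0s}$, $c_{1k}$ and $d$ must all coincide. This forces the chain of equalities between $G\se_{0s}(\cdot)$, $G\se_{1,J_1}(\cdot)$, $H\se_\nu(\cdot)$ and $G\se_{1k}(\cdot)$ in the statement. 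Next, applying exactly the same manipulation that turned \eqref{eq:v0jperp} into \eqref{eq:carat0j} (that is, using the recursive expressions \eqref{eq:lintheta0jind}--\eqref{eq:lintheta1jind} for the differentials of the switching times together with the positivity of the denominators coming from Assumptions \ref{ass:2} and \ref{ass:3}), the chain of Hamiltonian equalities rewrites as the explicit formulas for $a_{0s}$, $b$ and $a_{1s}$ in terms of $\scal{\ud(\theta_{i,s+1}-\theta_{is})(\lo)}{\ud\a_*\dx}$.

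Now I would consider the variation $(\dy,0,0)$ with $\dy\in T_{\wh x_0}M$ such that $L(\dy,0,0)=\dy\in T_{\wh x_f}N_f$; this is the only remaining degree of freedom distinguishing $\cN$ from $V_{1J_1}$. Inserting it into \eqref{eq:bilihami} and using the chain of equalities already established to collapse the sum of bracket terms, the condition $J\se_\nu(\de,(\dy,0,0))=0$ becomes $-\scal{\omega_\nu}{\dy}+G\se_{1J_1}(\dl_\nu)\cdot(\text{stuff})=0$; combined with the constraint $L(\dx,a,b)\in T_{\wh x_f}N_f$ and with $\de\in\cN$, this only refines the characterization of $\dx$ through the boundary datum and does not add new equations besides the ones already listed. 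To get the stated formula for $J\se_\nu[\de]^2$, I would plug $(\dy,c,d)=(\dx,a,b)$ into \eqref{eq:bilihami}, use once more the chain of equalities to simplify all the Hamiltonian terms to a single common value, and finally use $b+\sum a_{ij}=0$ to recognize the result as $-\scal{\omega_\nu}{\,\dx+\sum a_{is}g_{is}(\wh x_0)+b h_\nu(\wh x_0)}$. The symplectic reformulation in the last line is then obtained by applying the definition \eqref{eq:isoanti} of the isomorphism $i$ to the pair $\bigl(0,\dx+\sum a_{is}g_{is}(\wh x_0)+b h_\nu(\wh x_0)\bigr)$ and to $\dl_\nu$ itself.

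The main obstacle I anticipate is the bookkeeping in the second step: the coefficients of $c_{0s}$ in \eqref{eq:bilihami} mix $\dl+\sum_{r<s}a_{0r}\vG\se_{0r}$ with cross-terms $[g_{0s},g_{1j}]\cdot\wh\beta(\wh x_0)$, so to extract the clean equalities $G\se_{0s}(\cdot)=G\se_{1,J_1}(\cdot)$ I need to recognize these cross-terms as the flow-pulled Hamiltonian differences $G\se_{0s}(\dl+\sum a_{0r}\vG\se_{0r})-G\se_{0s}(\dl_\nu)$, exactly as in the inductive argument used in \cite{ASZ02b}. Once this identification is in place, the rest is a purely algebraic repetition of the pattern already set by Lemmas \ref{lem:v0jperp}--\ref{lem:v1jperp}, and the final passage to the symplectic form is dictated by the definition of $i$ and by the identity $\ud\a_*\dx=i(\dl)$.
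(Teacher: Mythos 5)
The paper itself does not prove this lemma; it states all four characterization lemmas (\ref{lem:v0jperp}--\ref{lem:Njperp}) without proof, deferring to \cite{ASZ02b}. So I can only judge your argument on its own merits, and there I see a genuine error in the central step.

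You claim that the defining constraint of $V_{1J_1}$ is only the scalar relation $d+\sum_{i,s}c_{is}=0$, ``the transversality constraint on the final point was relaxed because $V_{1J_1}=V$.'' This is backwards: $V$ is defined by the \emph{stronger} condition $L(\dy,c,d)=0$, and $\cN$ is the larger space where $L(\dy,c,d)$ is merely required to push forward into $T_{\wh x_f}N_f$. The chain $V_{01}\subset\dots\subset V_{1J_1}=V\subset\cN$ never drops the equality $L=0$. Consequently the test variations $(0,c,d)$ you propose are \emph{not} elements of $V_{1J_1}$ (for $(0,c,d)$ to lie in $V$ one would need $L(0,c,d)=dh_\nu(\wh x_0)+\sum c_{ij}g_{ij}(\wh x_0)=0$), and $J\se_\nu(\de,(0,c,d))=0$ over the constraint $d+\sum c=0$ gives the wrong conditions. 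Indeed, in \eqref{eq:bilihami} the leading term $-\scal{\omega_\nu}{\dy+\sum c_{0s}g_{0s}+dh_\nu+\sum c_{1s}g_{1s}}=-\scal{\omega_\nu}{L(\dy,c,d)}$, which vanishes precisely because one plugs in the $\dy=-\big(dh_\nu+\sum c_{ij}g_{ij}\big)(\wh x_0)$ that makes $(\dy,c,d)\in V$; with $\dy=0$ this term survives and contaminates the conditions you derive. The fix is small but essential: parametrize $V_{1J_1}$ by $(c,d)$ with $d+\sum c=0$, set $\dy$ to the value forced by $L=0$, and then the $\omega_\nu$-term dies and the remaining Hamiltonian terms yield the stated chain of equalities. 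Your translation into the explicit $a_{0s}$, $b$, $a_{1s}$ formulas and the evaluation of $J\se_\nu[\de]^2$ on $\de\in\cN\cap V^{\perp_{J\se_\nu}}$ (where now the $-\scal{\omega_\nu}{L(\dx,a,b)}$ term survives because $\de\notin V$) are fine once this is corrected. Your ``Step 3'' with $(\dy,0,0)$ and $\wh S_{T*}\dy\in T_{\wh x_f}N_f$ is actually superfluous: such elements lie in $\cN\setminus V_{1J_1}$ and orthogonality to $V_{1J_1}$ imposes nothing against them; you seem half-aware of this, but presenting it as a needed step only obscures the logic.
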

\section{The invertibility of the flow}
\label{sec:invert}
We are now going to prove that  the map
\[
\id \times \pi\cH \colon (t, \ell) \in [0, T] \times \Lambda \mapsto
(t, \pi\cH_t(\ell)) \in [0, T] \times M
\]
is one-to-one onto a neighborhood of the graph of $\wh\xi$.
Since the time interval $[0, T]$ is compact and by the properties of
flows, it suffices to show that $\pi\cH_{\hat\theta_{ij}}$, $i=1, 2$,
$j=1, \ldots, J_i$ and $\pi\cH_{\hat\tau}$ are one-to-one onto a
neighborhood of $\wh\xi(\hat\theta_{ij})$ and $\wh\xi(\hat\tau)$ in $M$,
respectively.

The proof of the invertibility at the simple switching times
$\hat\theta_{0j}$, $j=1, \ldots, J_0$ my be carried out either as in
\cite{ASZ02b} or by means of  Clarke's inverse function theorem (see
\cite[Thm 7.1.1.]{Cla83}), while the invertibility at the double
switching time and at the simple switching times $\hat\theta_{1j}$,
$j=1, \ldots, J_1$ will be proved by means of  Clarke's
inverse function theorem or by means of topological methods (see
Theorem \ref{thm:topo}) according to the dimension of the kernel of
$\ud\,(\tau_1 - \tau_2)\vert_{T_\lo\Lambda}$. 

For the sake of uniformity with the others switching
times, for the simple switching times $\hat\theta_{0j}$, $j=1,
\ldots, J_0$ and  we give here the proof based on Clarke's
inverse function theorem. Namely, we consider the expressions of
$\pi\cH_{\hat\theta_{0j}}(\ell)$, which are different according to
whether $\theta_{0j}(\ell)$ is greater than or smaller than
$\hat\theta_{0j}$. We write the linearization of such expressions and
their convex combinations. Finally, using the coercivity of the
second variation on $V_{0j}$ we prove that all their convex
combinations are one--to--one.

The flow $\cH_{\hat\theta_{0j}}$ at time
$\hat\theta_{0j}$, associated to the maximized Hamiltonian defined in
equation \eqref{eq:maxfl}, has the following expression:
\[
\cH_{\hat\theta_{0j}}(\ell) = \begin{cases}
  \exp \hat\theta_{0j}\vK_{0,j-1}(\phi_{0,j-1}(\ell)) & \text{if } \theta_{0j}(\ell) > \hat\theta_{0j}\\
  \exp(\hat\theta_{0j} - \theta_{0j}(\ell))\vK_{0j} \circ \exp
  \theta_{0j}(\ell)\vK_{0,j-1}(\phi_{0,j-1}(\ell)) & \text{if }
  \theta_{0j}(\ell) < \hat\theta_{0j} .
\end{cases}
\]
\begin{lemma}
  \label{lem:maxflow0j}
  Let $j \in \{1, \ldots, J_0\}$. Define
  \begin{align*}
    & A_{0j} \colon \dl \in T_\lo\Lambda \mapsto \pi_*\exp\hat\theta_{0j}\vK_{0, j-1 \, *}\varphi_{0, j-1 \, *}\dl \in T_{\wh\xi(\hat\theta_{0j})}M \\
    & B_{0j} \colon \dl \in T_\lo\Lambda \mapsto A_{0j}\dl -
    \scal{\ud\theta_{0j}(\lo)}{\dl}\big( k_{0j} - k_{0, j-1}
    \big)\vert_{\wh\xi(\hat\theta_{0j}) } \in
    T_{\wh\xi(\hat\theta_{0j})}M
  \end{align*}
  Then, for any $t \in [0, 1]$, the map
  \begin{equation*}
    tA_{0j} + (1-t)B_{0j} \colon T_\lo\Lambda \to T_{\wh\xi(\hat\theta_{0j})}M 
  \end{equation*}
  is one-to-one.
\end{lemma}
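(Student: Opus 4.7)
Both $T_\lo\Lambda$ and $T_{\wh\xi(\hat\theta_{0j})}M$ have dimension $n$, so it suffices to show $tA_{0j}+(1-t)B_{0j}$ is injective. I would suppose $\dl\in T_\lo\Lambda$ satisfies $(tA_{0j}+(1-t)B_{0j})\dl=0$, i.e.
\[
A_{0j}\dl \,=\, (1-t)\scal{\ud\theta_{0j}(\lo)}{\dl}\,(k_{0j}-k_{0,j-1})(\wh\xi(\hat\theta_{0j})),
\]
and then manufacture from $\dl$ a specific element $\de\in V_{0j}\cap V_{0,j-1}^{\perp_{J\se_\nu}}$ whose second variation works out to be non-positive. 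Since $J\se_\nu$ is assumed positive definite on $\cN\supset V_{0j}$, this will force $\de=0$ and thereby $\dl=0$.

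The first step is to re-express $A_{0j}$. Substituting \eqref{eq:linphi0jind} for $\varphi_{0,j-1\,*}$ and using the intertwining $\pi\circ\wh\cH_t=\wh S_t\circ\pi$, I obtain $A_{0j}\dl=\wh S_{\hat\theta_{0j}\,*}\pi_*\Delta_{0,j-1}\dl$. Since also $(k_{0j}-k_{0,j-1})(\wh\xi(\hat\theta_{0j}))=\wh S_{\hat\theta_{0j}\,*}(g_{0j}-g_{0,j-1})(\wh x_0)$, cancelling the isomorphism $\wh S_{\hat\theta_{0j}\,*}$ turns the kernel relation into an identity in $T_{\wh x_0}M$:
\[
\pi_*\dl \,=\, \sum_{s=1}^{j-1} c_s\,(g_{0s}-g_{0,s-1})(\wh x_0) \,+\, (1-t)\,c_j\,(g_{0j}-g_{0,j-1})(\wh x_0),
\]
where $c_s:=\scal{\ud\theta_{0s}(\lo)}{\dl}$.

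Next I would set $\dx:=\pi_*\dl$ (so $\ud\alpha_*\dx=\dl$) and define $a_{0s}:=c_{s+1}-c_s$ for $s=0,\dots,j-2$ (with the convention $c_0=0$), $a_{0,j-1}:=(1-t)c_j-c_{j-1}$, $a_{0j}:=-(1-t)c_j$, $a_{0s}=0$ for $s>j$, $a_{1s}=0$, and $b=0$. The displayed identity above rearranges to $L(\dx,a,b)=0$, one checks $b+\sum a_{ij}=0$, and the formula $a_{0s}=c_{s+1}-c_s=\scal{\ud(\theta_{0,s+1}-\theta_{0s})(\lo)}{\dl}$ for $s\le j-2$ is exactly the characterization of Lemma \ref{lem:v0jperp}; hence $\de:=(\dx,a,b)\in V_{0j}\cap V_{0,j-1}^{\perp_{J\se_\nu}}$.

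Finally I would plug $\de$ into formula \eqref{eq:jsekk1}. Telescoping shows that $\dl+\sum_{s=0}^{j-1}a_{0s}\vG_{0s}(\lo)=\Delta_{0,j-1}\dl+(1-t)c_j\vG_{0,j-1}(\lo)$; substituting \eqref{eq:lintheta0jind} to rewrite $\bsi(\Delta_{0,j-1}\dl,(\vG_{0j}-\vG_{0,j-1})(\lo))$ in terms of $c_j\,\bsi(\vG_{0,j-1},\vG_{0j})(\lo)$, the whole expression collapses to
\[
J\se_\nu[\de]^{\,2} \,=\, -\,t(1-t)\,c_j^{\,2}\,\bsi\big(\vG_{0,j-1},\vG_{0j}\big)(\lo),
\]
which is non-positive for every $t\in[0,1]$ by Assumption \ref{ass:2} in the reformulation \eqref{eq:leg1c}. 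Positive definiteness then forces $\de=0$, so $\dx=\pi_*\dl=0$, and since $\pi_*\colon T_\lo\Lambda\to T_{\wh x_0}M$ is the inverse of $\ud\alpha_*$ and hence a linear isomorphism, $\dl=0$. The main obstacle is the sign bookkeeping in this last collapse: splitting $a_{0j}=-(1-t)c_j$ so that one piece cancels the $-c_j$ arising from the linearization of the switching time and the other contributes $(1-t)c_j$ is exactly what produces the clean factor $-t(1-t)$ and makes the inequality work uniformly in $t\in[0,1]$, covering the two directional derivatives $A_{0j}$ and $B_{0j}$ and every convex combination in a single blow.
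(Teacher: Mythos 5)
Your proof is correct and follows essentially the same route as the paper's: pull back the kernel relation along $\wh S_{\hat\theta_{0j}\,*}$ using \eqref{eq:linphi0jind}, package the result as an element $\de\in V_{0j}\cap V_{0,j-1}^{\perp_{J\se_\nu}}$, and evaluate it with \eqref{eq:jsekk1} to obtain $-t(1-t)\scal{\ud\theta_{0j}(\lo)}{\dl}^2\dueforma{\vG_{0,j-1}}{\vG_{0j}}(\lo)$. The only (cosmetic) differences are that you carry the coefficient $(1-t)$ of the jump term consistently — which is indeed the correct coefficient for $tA_{0j}+(1-t)B_{0j}$ — and you treat the endpoints $t\in\{0,1\}$ uniformly via $J\se_\nu[\de]^2\le 0$ rather than splitting them off as the paper does.
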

\begin{proof}
  Let $t \in [0,1]$ and let $\dl \in T_\lo\Lambda$ such that $(tA_{0j}
  + (1-t)B_{0j})(\dl) = 0 $. We need to show that $\dl$ is null. From
  formula \eqref{eq:linphi0j} it follows that $\dl$ is in $
  \ker(tA_{0j} + (1-t)B_{0j})$ if and only if
  \begin{equation}\label{eq:clarke1}
    \pi_*\wh\cH_{\hat\theta_{0j}\, *}  \Delta_{0, j-1} \dl = 0.
  \end{equation}
  Let $\dx := \pi_*\dl$, so that $\dl = \ud\alpha_*\dx$.  Equation
  \eqref{eq:clarke1} is equivalent to
  \begin{multline}
    \label{eq:clarke2}
    \dx + \sum_{s=1}^{j-2} \scal{\ud(\theta_{0, s+1} -
      \theta_{0s})(\lo)}{\dl}
    g_{0s}(\wh x_0) + \\
    + \big( t \scal{\ud\theta_{0j}(\lo)}{\dl} - \scal{\ud\theta_{0,
        j-1}(\lo)}{\dl}\big) g_{0, j-1}(\wh x_0) - t
    \scal{\ud\theta_{0j}(\lo)}{\dl} g_{0j}(\wh x_0) = 0.
  \end{multline}
  Let $\de := (\dx, a, b)$ such that
  \begin{align*}
    & a_{0s} =  \scal{\ud(\theta_{0, s+1} - \theta_{0s})(\lo)}{\dl} \quad s = 0, \ldots, j-2 \\
    & a_{0, j-1} = t \scal{\ud\theta_{0j}(\lo)}{\dl} -
    \scal{\ud\theta_{0, j-1}(\lo)}{\dl} \\
    & a_{0j} = - t \scal{\ud\theta_{0j}(\lo)}{\dl} \\
    & a_{0s} = b = a_{1r}=0 \quad s = j+1, \ldots, J_0, \ r = 0,
    \ldots, J_1.
  \end{align*}
  There are three possible cases: \\
  {\em a) } If $t=0$, then $\de \in V_{0,j-1} \cap V_{0,
    j-1}^{{\perp_{J\se_\nu}}} = \{ 0 \}$, because of the coercivity of
  $J\se_\nu$.  \\
  {\em b) } If $t=1$, then $\de \in V_{0j} \cap
  V_{0j}^{{\perp_{J\se_\nu}}} = \{ 0 \}$, because of the coercivity of
  $J\se_\nu$.  In both cases we thus have $\dx = 0$, so that $\dl =
  \ud\alpha_*\dx$
  is also null. \\
  {\em c) } If $t \in (0, 1)$, then $\de \in V_{0j} \cap V_{0,
    j-1}^{{\perp_{J\se_\nu}}}$. Therefore, applying \eqref{eq:jsekk1}
  we get
  \begin{align*}
    0 & < J\se_\nu[\de]^2 \, = \, t \scal{\ud\theta_{0j}(\lo)}{\dl}\, \bsi
    \big( \dl + \dsum_{s=0}^{j-2}
    \scal{\ud(\theta_{0, s+1} - \theta_{0s})(\lo)}{\dl}\vG_{0s}(\lo) + \\
    + & \big( t \scal{\ud\theta_{0j}(\lo)}{\dl} - \scal{\ud\theta_{0,
        j-1}(\lo)}{\dl} \big)\vG_{0, j-1}(\lo) \, , \, (\vG_{0j} - \vG_{0, j-1})(\lo)\big) = \\
    = \, & t \, \scal{\ud\theta_{0j}(\lo)}{\dl} \, \bsi \big(
    \Delta_{0, j-1}\dl
    +   t  \, \scal{\ud\theta_{0j}(\lo)}{\dl} \vG_{0, j-1}(\lo) \, , \, (\vG_{0j} - \vG_{0, j-1})(\lo)\big) = \\
    = & -t\, (1 -t) \scal{\ud\theta_{0j}(\lo)}{\dl}^2
    \dueforma{\vG_{0, j-1}}{\vG_{0j}}(\lo),
  \end{align*}
  a contradiction.
\end{proof} 

Lemma \ref{lem:maxflow0j} implies that Clarke's Generalized Jacobian
of the map $\pi\cH_{\hat\theta_{0j}}$ at $\lo$ is of maximal
rank. Therefore, by Clarke's inverse function theorem (see \cite[Thm
7.1.1.]{Cla83}) the map $\pi\cH_{\hat\theta_{0j}}$ is locally
invertible about $\lo$ with Lipschitz continuous inverse. Hence the
map
\begin{equation}
  \label{eq:invappl}
  \psi \colon (t, \ell) \in [0, T] \times \Lambda
  \mapsto \left( t, \pi  \cH_t(\ell)\right) \in [0, T] \times M
\end{equation}
is locally invertible about $[0, \hat\tau - \e] \times \big\{\lo
\big\}$.  In fact, $\psi$ is locally one-to-one if and only if $\pi
\cH_t$ is locally one-to-one in $\lo$ for any $t$. On the other hand
$\pi \cH_t$ is locally one-to-one for any $t < \hat\tau$ if and only
if it is one-to-one at any $\hat\theta_{0j}$.

We now show that such procedure can be carried out also on
$[\hat\tau - \e, T] \times \big\{\lo \big\}$, so that $\psi$ will turn
out to be locally invertible from a neighborhood $[0, T] \times \cO
\subset [0, T] \times\Lambda$ of $[0, T] \times \big\{\lo \big\}$ onto
a neighborhood $\cU \subset [0, T] \times M$ of the graph $\wh\Xi$ of
$\wh\xi$.  The first step will be proving the invertibility of
$\pi\cH_{\hat\tau}$ at $\lo$.

In a neighborhood of $\lo$, $\pi\cH_{\hat\tau}$ has the following
piecewise representation:
\begin{enumerate}
\item if $\min\big\{ \tau_1(\ell), \; \tau_2(\ell)
  \big\}\geq\widehat\tau$, then %
  $\quad \pi\cH_{\hat\tau}(\ell) = \exp \hat\tau\vK_{0
    J_0}\circ\phi_{0 J_0}(\ell)$,
\item if $\min\big\{\tau_1(\ell),\tau_2(\ell)\big\}=
  \tau_1(\ell)\leq\hat\tau\leq\theta_{10}(\ell)$, then
  \[
  \pi\cH_{\hat\tau}(\ell) = \exp(\hat\tau -\tau_1(\ell))\vK_1 \circ
  \exp\tau_1(\ell)\vK_{0 J_0}\circ\phi_{0 J_0}(\ell),
  \]
\item if $\min\big\{\tau_1(\ell),\tau_2(\ell)\big\}=
  \tau_2(\ell)\leq\hat\tau\leq\theta_{10}(\ell)$, then
  \[
  \pi\cH_{\hat\tau}(\ell) = \exp(\hat\tau -\tau_2(\ell))\vK_2 \circ
  \exp\tau_2(\ell)\vK_{0 J_0}\circ \phi_{0 J_0}(\ell),
  \]
\item if $\min\big\{\tau_1(\ell),\tau_2(\ell)\big\}=
  \tau_1(\ell)\leq\theta_{10}(\ell)\leq\hat\tau$, then
  \begin{multline*}
    \pi\cH_{\hat\tau}(\ell) =\exp(\hat\tau\vK_{1 0})\circ\psi_{1
      0}(\ell)= \exp(\hat\tau - \theta_{1 0}(\ell))\vK_{1 0}\circ \\
    \circ\exp ( \theta_{1 0}(\ell) - \tau_1(\ell))\vK_1 \circ
    \exp\tau_1(\ell)\vK_{0 J_0}\circ \phi_{0 J_0}(\ell),
  \end{multline*}
\item if $\min\big\{\tau_1(\ell),\tau_2(\ell)\big\}=
  \tau_2(\ell)\leq\theta_{10}(\ell)\leq\hat\tau$, then
  \begin{multline*}
    \pi\cH_{\hat\tau}(\ell) =\exp(\hat\tau\vK_{1 0})\circ\psi_{1
      0}(\ell)= \exp(\hat\tau-\theta_{1 0}(\ell))\vK_{1 0}\circ \\
    \circ\exp(\theta_{1 0}(\ell) - \tau_2(\ell))\vK_2 \circ
    \exp\tau_2(\ell)\vK_{0 J_0}\circ \phi_{0 J_0}(\ell).
  \end{multline*}
\end{enumerate}
The invertibility of $\pi\cH_{\hat\tau}$ will be proved by means of
two different arguments: in the generic case when $\ud\,(\tau_1 -
\tau_2)(\lo) \colon T_{\lo}\Lambda \to \R$ is not identically zero, we
will use the topological argument of Theorem \ref{thm:topo} in the
Appendix; whereas, in the opposite case we will apply Clarke's inverse
function theorem \cite[Thm 7.1.1.]{Cla83}, as in the case of simple
switches. In particular, in the special case when
$\ud\tau_1(\lo)\vert_{T_{\lo}\Lambda} \equiv
\ud\tau_2(\lo)\vert_{T_{\lo}\Lambda} \equiv 0$ we will prove that
$\pi\cH_{\hat\tau}$ is indeed differentiable at $\lo$.

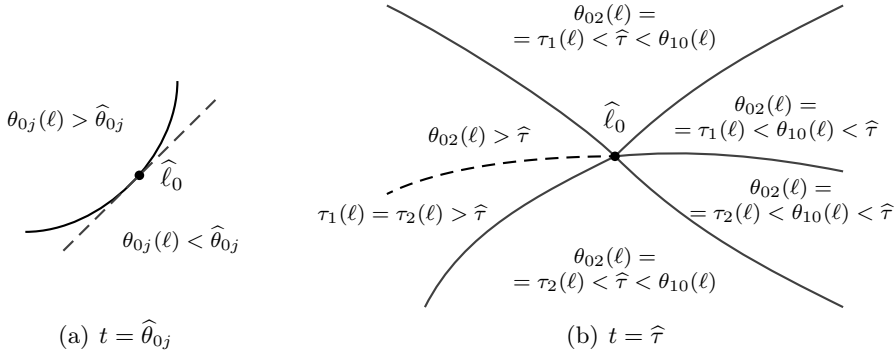
\begin{figure}[ht!]
\subfigure[$t=\wh\theta_{0j}$]{
      \begin{pspicture}(0,4)(4.5,0)
        \psbezier{-}(1,1)(2,1)(3,2)(3,3)
        \uput[l](2.5,2.5){\scriptsize $\theta_{0j}(\ell) > \wh\theta_{0j}$} 
        \psdot(2.5,1.75)
        \uput[r](2.1,0.9){\scriptsize $\theta_{0j}(\ell) <\wh\theta_{0j}$} 
        \uput[r](2.6,1.75){\small $\lo$} 
        \psline[linestyle=dashed,linecolor=darkgray]{-}(1.5,0.75)(3.5,2.75)
      \end{pspicture}}
\subfigure[$t=\wh\tau$]{
      \begin{pspicture}(1,4)(9,2) %
        \psbezier[linecolor=darkgray]{-}(2,6)(3,5.5)(4.5,4.5)(5,4)
        \psbezier[linestyle=dashed]{-}(2,3.5)(3,4)(4.7,4)(5,4)
        \psbezier[linecolor=darkgray]{-}(2.5,2)(3,3)(4,3.5)(5,4)
        \psbezier[linecolor=darkgray]{-}(8,2)(7,2.5)(6,3)(5,4)
        \psbezier[linecolor=darkgray]{-}(8,3.8)(7,4)(6,4.1)(5,4)
        \psbezier[linecolor=darkgray]{-}(8,6)(7,5.5)(6,5)(5,4)
        \uput[d](2.2,3.55){\scriptsize{$\tau_1(\ell) = \tau_2(\ell) > \wh\tau$}} %
        \uput[ul](4,4){\scriptsize{$\theta_{02}(\ell) > \wh\tau$}} %
        \uput[u](5,5.2){\scriptsize{$ \begin{matrix}\theta_{02}(\ell)=\\ =\tau_1(\ell)
            <\wh\tau < \theta_{10}(\ell)\end{matrix}$}} %
        \uput[d](5,3){\scriptsize{$ \begin{matrix}\theta_{02}(\ell)=\\ =\tau_2(\ell)
            <\wh\tau < \theta_{10}(\ell)\end{matrix}$}} %
        \uput[r](5.8,3.4){\scriptsize{$\begin{matrix}\theta_{02}(\ell)=\\ =\tau_2(\ell) <
            \theta_{10}(\ell) < \wh\tau\end{matrix}$}} %
        \uput[r](5.6,4.5){\scriptsize{$\begin{matrix}\theta_{02}(\ell)=\\ =\tau_1(\ell) <
            \theta_{10}(\ell) < \wh\tau\end{matrix}$}} %
        \uput[u](5,4.15){\small $\lo$}
        \psdot(5,4)
      \end{pspicture}
}
\caption{Local behaviour of $\cH_t$ near $\lo$ at a
simple switching time and at the double one.}
\end{figure}

In all cases we need to write the piecewise linearized map
$(\pi\cH_{\hat\tau})_*$.
\begin{subequations}\label{eq:lintau}
  \begin{enumerate}
  \item Let $M^0 = \{ \dl \in T_\lo\Lambda \colon
    \min\{\scal{\ud\tau_1(\lo)}{\dl}, \; \scal{\ud\tau_2(\lo)}{\dl} \}
    \geq 0 \}$. Then
    \begin{equation}
      \label{eq:lin1}
      (\pi\cH_{\hat\tau})_*\dl  = L^0\dl := (\exp\hat\tau k_{0 J_0 })_*\pi_*\phi_{0 J_0\, *}\dl \qquad\qquad\qquad \forall \dl \in M^0
    \end{equation}
  \item Let $M^{11} := \{ \dl \in T_\lo\Lambda \colon
    \scal{\ud\tau_1(\lo)}{\dl} \leq 0 \leq
    \scal{\ud\theta^1_{10}(\lo)}{\dl} , \ \scal{\ud\tau_1(\lo)}{\dl}
    \leq \scal{\ud\tau_2(\lo)}{\dl} \}$. Then
    \begin{multline}\label{eq:lin2}
      (\pi\cH_{\hat\tau})_*\dl = L^{11}\dl := - 2
      \scal{\ud\tau_1(\lo)}{\dl} f_1(\wh x_{\hat\tau}) +\exp(\hat\tau
      k_{0 J_0})_*\pi_*\phi_{0 J_0 *}\dl \\ \forall \dl \in M^{11}
    \end{multline}
  \item Let $M^{21} := \{ \dl \in T_\lo\Lambda \colon
    \scal{\ud\tau_2(\lo)}{\dl} \leq 0 \leq
    \scal{\ud\theta^2_{10}(\lo)}{\dl} , \ \scal{\ud\tau_2(\lo)}{\dl}
    \leq \scal{\ud\tau_1(\lo)}{\dl}\}$.  Then
    \begin{multline}\label{eq:lin3}
      (\pi\cH_{\hat\tau})_*\dl = L^{21}\dl := - 2
      \scal{\ud\tau_2(\lo)}{\dl} f_2(\wh x_{\hat\tau}) +\exp(\hat\tau
      k_{0 J_0})_*\pi_*\phi_{0 J_0 *}\dl \\ \forall \dl \in M^{21}
    \end{multline}
  \item Let $M^{12} := \{ \dl \in T_\lo\Lambda \colon
    \scal{\ud\tau_1(\lo)}{\dl} \leq \scal{\ud\theta^1_{10}(\lo)}{\dl}
    \leq 0 , \ \scal{\ud\tau_1(\lo)}{\dl} \leq
    \scal{\ud\tau_2(\lo)}{\dl} \} $. Then
    \begin{multline}
      \label{eq:lin4}
      (\pi\cH_{\hat\tau})_*\dl = L^{12}\dl := - 2
      \scal{\ud\theta_{10}^1(\lo)}{\dl}
      f_2 (\wh x_{\hat\tau}) - \\
      - 2 \scal{\ud\tau_1(\lo)}{\dl} f_1(\wh x_{\hat\tau})
      +\exp(\hat\tau k_{0 J_0})_*\pi_*\phi_{0 J_0 *}\dl \qquad \forall
      \dl \in M^{12}
    \end{multline}
  \item Let $M^{22} := \{ \dl\in T_\lo\Lambda \colon
    \scal{\ud\tau_2(\lo)}{\dl} \leq \scal{\ud\theta^2_{10}(\lo)}{\dl}
    \leq 0 , \ \scal{\ud\tau_2(\lo)}{\dl} \leq
    \scal{\ud\tau_1(\lo)}{\dl} \}$. Then
    \begin{multline}
      \label{eq:lin5}
      (\pi\cH_{\hat\tau})_*\dl = L^{22}\dl :=
      - 2 \scal{\ud\theta_{10}^2(\lo)}{\dl} f_1 (\wh x_{\hat\tau}) - \\
      - 2 \scal{\ud\tau_2(\lo)}{\dl} f_2(\wh x_{\hat\tau})
      +\exp(\hat\tau k_{0 J_0})_*\pi_*\phi_{0 J_0 *}\dl \qquad \forall
      \dl \in M^{22}
    \end{multline}
  \end{enumerate}
\end{subequations}

\begin{lemma}\label{lem:equiori}
  The piecewise linearized maps \eqref{eq:lintau} have the same
  orientation in the following sense: given any basis of
  $T_{\lo}\Lambda_0$ and any basis of $T_{\wh\xi(\hat\tau)}M$, the
  determinants of the matrices associated to the linear maps $L^0$,
  $L^{\nu j}$, $\nu, j = 1,2$, in such bases, have the same sign.
\end{lemma}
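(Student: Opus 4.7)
The plan is to mirror the simple-switch argument of Lemma \ref{lem:maxflow0j}: deform each $L^{\nu j}$ continuously to $L^0$ through a straight-line path of linear maps and show that every member of the path is non-singular. Since $\mathrm{GL}$ has two connected components distinguished by $\mathrm{sgn}\det$, transitivity will then yield the claim once four segments are handled --- $L^0\leftrightarrow L^{11}$, $L^0\leftrightarrow L^{21}$, $L^{11}\leftrightarrow L^{12}$, and $L^{21}\leftrightarrow L^{22}$ --- each obtained by activating one rank-one correction at a time. For example, the first segment is
\[
L_t^{(11)}\dl := L^0\dl - 2t\scal{\ud\tau_1(\lo)}{\dl}\,f_1(\wh x_d),\qquad t\in[0,1].
\]

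For $L_t^{(11)}$, assume for contradiction $L_t^{(11)}\dl = 0$ with $\dl = \ud\alpha_*\dx \neq 0$ and $t\in(0,1]$. From $h_\nu = g_{0J_0} + 2\wt f_\nu$ one has $2f_\nu(\wh x_d) = \pi_*\wh\cH_{\hat\tau\,*}(\vH_\nu - \vG_{0J_0})(\lo)$, and combining with \eqref{eq:linphi0jind} the kernel equation rewrites as
\[
\pi_*\wh\cH_{\hat\tau\,*}\bigl[\Delta_{0J_0}\dl - t\scal{\ud\tau_1(\lo)}{\dl}(\vH_1 - \vG_{0J_0})(\lo)\bigr] = 0.
\]
I then construct $\de = (\dx, a, b)$ by declaring $a_{1s}=0$ for all $s$, $b = 2t\scal{\ud\tau_1(\lo)}{\dl}$, $a_{0s} = \scal{\ud(\theta_{0,s+1}-\theta_{0s})(\lo)}{\dl}$ for $s\leq J_0-1$, and finally choosing $a_{0J_0}$, $a_{10}$ so that $L(\dx,a,b)=0$; existence of such scalars follows from the displayed kernel equation, which under $\pi_*\wh\cH_{\hat\tau\,*}$ is exactly the statement $L(\dx,a,b)=0$. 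By Lemma \ref{lem:v10perp}, $\de\in V_{10}\cap V_{0J_0}^{\perp_{J_1\se}}$, and plugging these choices into \eqref{eq:varsecv10perp} the cross-terms cancel exactly as in Lemma \ref{lem:maxflow0j}, leaving
\[
J_1\se[\de]^2 = -\,t(1-t)\,\scal{\ud\tau_1(\lo)}{\dl}^2\,\dueforma{\vG_{0J_0}}{\vH_1}(\lo),
\]
which is non-positive by Assumption \ref{ass:3}; positive definiteness of $J_1\se$ then forces $\scal{\ud\tau_1(\lo)}{\dl}=0$, whence $L^0\dl = 0$, and a descent through $V_{0J_0}\supset V_{0,J_0-1}\supset\cdots$ via Lemma \ref{lem:v0jperp} gives $\dx=0$, a contradiction.

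For the segment $L^{11}\leftrightarrow L^{12}$ the same recipe applies, but one now exploits the identity $\vG_{10} - \vH_1 = \vH_2 - \vG_{0J_0}$ (immediate from $g_{10} = g_{0J_0} + 2f_1 + 2f_2$), so that $\Delta^1_{10}\dl$ of \eqref{eq:delta1jnu} absorbs precisely the two rank-one corrections present in $L^{12}$; setting $b = 2\scal{\ud\tau_1(\lo)}{\dl}$ and $a_{10} = 2t\scal{\ud\theta^1_{10}(\lo)}{\dl}$ one obtains $J_1\se[\de]^2 = -t(1-t)\scal{\ud\theta^1_{10}(\lo)}{\dl}^2\dueforma{\vH_1}{\vG_{10}}(\lo) \leq 0$. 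The segments $L^0\leftrightarrow L^{21}$ and $L^{21}\leftrightarrow L^{22}$ are symmetric, using the second variation $J_2\se$. The main obstacle will be the bookkeeping: verifying that the prescribed $(b, a_{10})$ actually puts $\de\in V$, and checking via the relations \eqref{eq:tautheta} between $\ud\tau_\nu(\lo)$ and $\ud\theta^\nu_{10}(\lo)$ that all cross terms in \eqref{eq:varsecv10perp} produce the claimed cancellation --- essentially a repeat of the calculations already carried out for simple switches.
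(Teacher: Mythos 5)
Your proposal is correct in strategy and, despite the different-looking parametrization, it is essentially the same argument as the paper's. The paper proves two Claims, each saying that if $\dl_2$ lies strictly in one sector and $\dl_1$ strictly in the adjacent one then $L^{\,\cdot}(\dl_1)\neq L^{\,\cdot}(\dl_2)$, and then invokes Lemma \ref{lem:lemalg} (the piecewise-linear map $\cL_{AB}$ is injective iff $\det A\cdot\det B>0$, when $A,B$ agree on a hyperplane). Your version instead shows each convex combination $(1-t)L^0+tL^{11}$ etc.\ is injective and appeals to connectedness of $\mathrm{GL}$. A change of variables makes these literally the same computation: setting $\dl=\dl_1-\dl_2$ and $t$ so that $\scal{\ud\tau_\nu(\lo)}{\dl_2}=-t\scal{\ud\tau_\nu(\lo)}{\dl}$ turns the paper's hypothesis $\scal{\ud\tau_\nu(\lo)}{\dl_2}<0<\scal{\ud\tau_\nu(\lo)}{\dl_1}$ into $t\in(0,1)$, and turns the paper's final product $\scal{\ud\tau_\nu(\lo)}{\dl_1}\scal{\ud\tau_\nu(\lo)}{\dl_2}\,\dueforma{\vG_{0J_0}}{\vH_\nu}(\lo)>0$ into your $-t(1-t)\scal{\ud\tau_\nu(\lo)}{\dl}^2\,\dueforma{\vG_{0J_0}}{\vH_\nu}(\lo)<0$. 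Both go through Lemma \ref{lem:v10perp} and the positivity of $J\se_\nu$ on $V_{10}\cap V_{0J_0}^{\perp_{J\se_\nu}}$, as you say; the adjacency chain you choose ($L^{12}\mbox{--}L^{11}\mbox{--}L^0\mbox{--}L^{21}\mbox{--}L^{22}$) is the right one, since each consecutive pair agrees on a hyperplane so that the rank-one correction and the second-variation characterization are available.

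The only real flaw is in the bookkeeping that you yourself flag. The scalars you write down for $\de$ do not actually solve $L(\dx,a,b)=0$. For the segment $L^0\leftrightarrow L^{11}$ the kernel equation, pulled back to $T_{\wh x_0}M$, reads
\[
\dx+\sum_{s=0}^{J_0-1}\scal{\ud(\theta_{0,s+1}-\theta_{0s})(\lo)}{\dl}\,g_{0s}+\bigl(-\scal{\ud\theta_{0J_0}(\lo)}{\dl}+t\scal{\ud\tau_1(\lo)}{\dl}\bigr)g_{0J_0}-t\scal{\ud\tau_1(\lo)}{\dl}\,h_1=0,
\]
which forces $b=-t\scal{\ud\tau_1(\lo)}{\dl}$ (not $2t\scal{\ud\tau_1(\lo)}{\dl}$), $a_{0J_0}=-\scal{\ud\theta_{0J_0}(\lo)}{\dl}+t\scal{\ud\tau_1(\lo)}{\dl}$, and $a_{10}=0$. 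With your value of $b$ the vector $-3t\scal{\ud\tau_1(\lo)}{\dl}\,h_1$ would have to be absorbed by $a_{0J_0}g_{0J_0}+a_{10}g_{10}$, which is generically impossible since $h_1\notin\mathrm{span}(g_{0J_0},g_{10})$. Similarly for $L^{11}\leftrightarrow L^{12}$ the correct choices are $b=-\scal{\ud\tau_1(\lo)}{\dl}+t\scal{\ud\theta^1_{10}(\lo)}{\dl}$ and $a_{10}=-t\scal{\ud\theta^1_{10}(\lo)}{\dl}$. With these corrections the constraint $b+\sum a_{ij}=0$ holds, Lemma \ref{lem:v10perp} puts $\de$ in $V_{10}\cap V_{0J_0}^{\perp_{J\se_1}}$, and the announced formulas $J\se_1[\de]^2=-t(1-t)\scal{\ud\tau_1(\lo)}{\dl}^2\dueforma{\vG_{0J_0}}{\vH_1}(\lo)$ and $J\se_1[\de]^2=-t(1-t)\scal{\ud\theta^1_{10}(\lo)}{\dl}^2\dueforma{\vH_1}{\vG_{10}}(\lo)$ are indeed obtained, so the argument closes. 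In short: same method as the paper, correct end formulas, but fix the coefficients of $b$ and $a_{10}$.
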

\begin{proof}
  The proof is given by means of Lemma \ref{lem:lemalg}. We show
  that for any $\dl_1$, $\dl_2 \in T_{\lo}\Lambda$ and $\nu =1, 2$ the following claims hold: \\[2mm]
  {\itshape Claim 1. } If $ \scal{\ud\tau_\nu(\lo)}{\dl} < 0 <
  \scal{\ud\tau_\nu(\lo)}{\dl_1}$ then $L^0(\dl_1) \ \neq
  L^{\nu 1}_{\wh\tau}(\dl_2)$, i.e.
  \begin{equation*}
    \exp(\hat\tau k_{0J_0})_*\pi_*\phi_{0J_0\, *}(\dl_1) 
    \neq \exp(\hat\tau k_{0J_0})_*\pi_*\phi_{0J_0\, *}(\dl_2) -
    \scal{\ud\tau_\nu(\lo)}{\dl_2} (k_\nu - k_{0J_0})(\wh x_{\hat\tau}) .
  \end{equation*}
  {\itshape Claim 2. } If $\scal{\ud\theta^\nu_{01}(\lo)}{\dl_2} < 0 <
  \scal{\ud\theta^\nu_{01}(\lo)}{\dl_1}$ then $L^{\nu
    1} (\dl_1) \ \neq L^{\nu 2}(\dl_2)$, i.e.
  \begin{multline*}
    \exp(\hat\tau k_{0J_0})_*\pi_*\phi_{0J_0\, *}(\dl_1) -
    \scal{\ud\tau_\nu(\lo)}{\dl_1}(k_\nu - k_{0J_0})(\wh
    x_{\hat\tau})\neq \\
    \neq \exp(\hat\tau k_{0J_0})_*\pi_*\phi_{0J_0\, *}(\dl_2) -
    \scal{\ud\tau_\nu(\lo)}{\dl_2} (k_\nu - k_{0J_0})(\wh
    x_{\hat\tau}) - \\
    - \scal{\ud\theta^\nu_{10}(\lo)}{\dl_2}(k_{10} - k_\nu)(\wh
    x_{\hat\tau})
  \end{multline*}
  {\em Proof of Claim 1. } Fix $\nu \in \{1, 2 \}$ and assume, by
  contradiction, that there exist $\dl_1$, $\dl_2 \in T_{\lo}\Lambda$
  such that $\scal{\ud\tau_\nu(\lo)}{\dl_2} < 0 <
  \scal{\ud\tau_\nu(\lo)}{\dl_1}$ and
  \begin{multline}\label{eq:contr1}
    \exp(\hat\tau k_{0J_0})_*\pi_*\phi_{0J_0\, *}(\dl_1) = \\
    = \exp(\hat\tau k_{0J_0})_*\pi_*\phi_{0J_0\, *}(\dl_2) -
    \scal{\ud\tau_\nu(\lo)}{\dl_2}(k_\nu - k_{0J_0})(\wh
    x_{\hat\tau}).
  \end{multline}
  Let $\dx_i := \pi_*\dl_i$, $i=1,2$. Taking the pull-back along the
  reference flow $\wh S_{\wh\tau \, *}$ and using formula
  \eqref{eq:linphi0jind}, equation \eqref{eq:contr1} can be
  equivalently written as
  \begin{multline*}
    \dx_1 - \dx_2 + \sum_{s=0}^{J_0 - 1} \scal{\ud\, (\theta_{0, s+1}
      - \theta_{0s})(\lo)}{\dl_1 - \dl_2} g_{0s}(\wh x_0 )
    + \\
    + \left( - \scal{\ud\tau_\nu(\lo)}{\dl_2} -
      \scal{\ud\theta_{0J_0}(\lo)}{\dl_1 - \dl_2} \right) g_{0J_0}(\wh
    x_0) + \scal{\ud\tau_\nu(\lo}{\dl_2} h_\nu(\wh x_0) = 0 .
  \end{multline*}
  That is, if we define $\dx := \dx_1 - \dx_2$,
  \[
  a_{0s} := \begin{cases}
    \scal{\ud\,(\theta_{0, s+1} - \theta_{0s})(\lo)}{\dl_1 - \dl_2} \quad & s=0, \ldots, J_0 - 1 \\
    - \scal{\ud\theta_{0J_0}(\lo)}{\dl_1 - \dl_2} -
    \scal{\ud\tau_\nu(\lo)}{\dl_2 } \quad & s = J_0
  \end{cases}
  \]
  $b := \scal{\ud\tau_\nu(\lo}{\dl_2}$, and $a_{1j} := 0$ for any
  $j=0, \ldots, J_1$, then $\de := (\dx, a, b) \in V_{10} \cap
  V_{0J_0}^\perp$, so that by \eqref{eq:varsecv10perp}
  \begin{multline*}
    - \scal{\ud\tau_\nu(\lo)}{\dl_2} \bsi \big( \ud\a_*\dx +
    \sum_{s=0}^{J_0 - 1}
    \scal{\ud\, (\theta_{0, s+1} - \theta_{0s})(\lo)}{\ud\a_*\dx} \vG_{0s}(\lo)  \\
    + ( \scal{\ud\theta_{0J_0}(\lo)}{\ud\a_*\dx} -
    \scal{\ud\tau_\nu(\lo)}{\ud\a_*\dx_2}\vG_{0J_0}(\lo) \, , \,
    (\vH_\nu - \vG_{0J_0})(\lo) \big) > 0
  \end{multline*}
  or, equivalently,
  \begin{multline*}
    - \scal{\ud\tau_\nu(\lo)}{\dl_2} \bsi\Big({ \Delta_{0J_0}
      \ud\a_*\dx -
      \scal{\ud\tau_\nu(\lo)}{\ud\a_*\dx_2}\vG_{0J_0}(\lo)}
    \, , \, \\
    { (\vH_\nu - \vG_{0J_0})(\lo)} \Big)> 0 .
  \end{multline*}
  Applying formula \eqref{eq:lintaunuind} we finally get
  \[
  \scal{\ud\tau_\nu(\lo)}{\dl_2} \scal{\ud\tau_\nu(\lo)}{\dl_1}
  \dueforma{\vG_{0J_0}}{\vH_\nu}(\lo) > 0,
  \]
  a contradiction.

  {\em Proof of Claim 2. } Let us fix $\nu \in \{1, 2 \}$ and assume,
  by contradiction, that there exist $\dl_1$, $\dl_2 \in
  T_{\lo}\Lambda$ such that $\scal{\ud\theta_{10}^\nu(\lo)}{\dl_2} < 0
  < \scal{\ud\theta_{10}^\nu(\lo)}{\dl_1}$ and
  \begin{multline}\label{eq:contr2}
    \exp(\hat\tau k_{0J_0})_*\pi_*\phi_{0J_0\, *}(\dl_1) - \scal{\ud\tau_\nu(\lo)}{\dl_1}(k_\nu - k_{0J_0})(\wh x_{\hat\tau}) = \\
    = \exp(\hat\tau k_{0J_0})_*\pi_*\phi_{0J_0\, *}(\dl_2) -
    \scal{\ud\tau_\nu(\lo)}{\dl_2}(k_\nu - k_{0J_0})(\wh x_{\hat\tau})
    - \\
    - \scal{\ud\theta^\nu_{10}(\lo)}{\dl_2}(k_{10} - k_\nu)(\wh
    x_{\hat\tau})
  \end{multline}
  Let $\dx_i := \pi_* \dl_i$, $i=1,2$. Taking the pull-back along the
  reference flow and using formula \eqref{eq:linphi0jind}, equation
  \eqref{eq:contr2} can be equivalently written as
  \begin{multline*}
    \dx_1 - \dx_2 + \! \sum_{s=0}^{J_0 - 1} \!\! \scal{\ud \,
      (\theta_{0,
        s+1} - \theta_{0s})(\lo)}{ \dl_1 - \dl_2}  g_{0s}(\wh x_0) + \\
    + \scal{\ud\, (\tau_\nu - \theta_{0J_0})(\lo)}{\dl_1 - \dl_2}
    g_{0J_0}(\wh x_0)
    + \\
    + \left( - \scal{\ud\tau_\nu(\lo)}{\dl_1 - \dl_2} -
      \scal{\ud\theta_{10}^\nu(\lo)}{\dl_2} \right) h_1 (\wh x_0) +
    \scal{\ud\theta_{10}^\nu(\lo)}{\dl_2} g_{10}(\wh x_0) = 0 .
  \end{multline*}
  That is, if we define $\dx := \dx_1 - \dx_2$,
  \[
  a_{0s} := \begin{cases}
    \scal{\ud\,(\theta_{0, s+1} - \theta_{0s})(\lo)}{\dl_1 - \dl_2}  \quad & s=0, \ldots, J_0 - 1 \\
    \scal{\ud\, (\tau_\nu - \theta_{0J_0})(\lo)}{\dl_1 - \dl_2} \quad
    & s = J_0
  \end{cases}
  \]
  $b := - \scal{\ud\tau_\nu(\lo)}{\dl_1 - \dl_2} -
  \scal{\ud\theta_{10}^\nu(\lo)}{\dl_2}$, and
  \[
  a_{1s} := \begin{cases}
    \scal{\ud \theta^\nu_{10}(\lo)}{\dl_2} \quad & s = 0 \\
    0 \quad & s = 1, \ldots, J_1,
  \end{cases}
  \]
  then $\de := (\dx, a, b) \in V_{10} \cap V_{0J_0}^\perp$ so that
  by Lemma \ref{lem:v10perp},
  \begin{multline*}
    \left( \scal{\ud\tau_\nu(\lo)}{\ud\a_*\dx} +
      \scal{\ud\theta^\nu_{10}(\lo)}{\dl_2} \right)
    \bsi \Big( \ud\a_*\dx + \\
    + \sum_{s=0}^{J_0 - 1}
    \scal{\ud \, (\theta_{0, s+1} - \theta_{0s})(\lo)}{\ud\a_*\dx} \vG_{0s}(\lo) + \\
    + \scal{\ud\, (\tau_\nu - \theta_{0J_0})(\lo)}{\ud\a_*\dx}
    \vG_{0J_0}(\lo) \, , (\vH_\nu - \vG_{0J_0})(\lo) \Big) - \\ -
    \scal{\ud\theta^\nu_{10}(\lo)}{\dl_2} \bsi \Big( \ud\a_*\dx +
    \sum_{s=0}^{J_0
      - 1} \scal{\ud \, (\theta_{0, s+1} - \theta_{0s})(\lo)}{\ud\a_*\dx} \vG_{0s}(\lo) + \\
    + \scal{\ud\, (\tau_\nu - \theta_{0J_0})(\lo)}{\ud\a_*\dx}
    \vG_{0J_0}(\lo) - ( \scal{\ud\tau_\nu(\lo)}{\ud\a_*\dx} +
    \scal{\ud\theta^\nu_{10}(\lo)}{\dl_2}
    )\vH_\nu(\lo)     \, ,  \\
    \, ( \vG_{10} - \vH_\nu )(\lo) \Big) > 0
  \end{multline*}
  or, equivalently,
  \begin{multline*}
    \left( \scal{\ud\tau_\nu(\lo)}{\ud\a_*\dx} +
      \scal{\ud\theta^\nu_{10}(\lo)}{\dl_2} \right) \bsi \Big(
    \Delta_{0J_0} \ud\a_*\dx
    + \scal{\ud\tau_\nu (\lo)}{\ud\a_*\dx} \vG_{0J_0}(\lo) \, , \\
    (\vH_\nu - \vG_{0J_0})(\lo) \Big)
    - \scal{\ud\theta^\nu_{10}(\lo)}{\dl_2} \bsi \Big( \Delta_{0J_0}\ud\a_*\dx + \\
    + \scal{\ud \tau_\nu(\lo)}{\ud\a_*\dx} \vG_{0J_0}(\lo) - (
    \scal{\ud\tau_\nu(\lo)}{\ud\a_*\dx}  \\
    + \scal{\ud\theta^\nu_{10}(\lo)}{\dl_2} )\vH_\nu(\lo) \, , \, (
    \vG_{10} - \vH_\nu )(\lo) \Big) > 0
  \end{multline*}
  that is
  \begin{multline}\label{eq:contr22}
    \Big( \scal{\ud\tau_\nu(\lo)}{\ud\a_*\dx} +
    \scal{\ud\theta^\nu_{10}(\lo)}{\dl_2} \Big) \, \Big( -
    \scal{\ud\tau_\nu(\lo)}{\ud\a_*\dx} +
    \scal{\ud\tau_\nu(\lo)}{\ud\a_*\dx} \Big) \\
    \bsi\big( \vG_{0J_0} \, , \, \vH_\nu)(\lo)   - \\
    - \scal{\ud\theta^\nu_{10}(\lo)}{\dl_2} \Big( -
    \scal{\ud\theta^\nu_{10}(\lo)}{\ud\a_*\dx} -
    \scal{\ud\theta^\nu_{10}(\lo)}{\dl_2} \Big) \bsi\big( \vH_\nu \, ,
    \, \vG_{10}\big)(\lo) > 0.
  \end{multline}
  Since $\ud\a_*\dx = \dl_1 - \dl_2$, we get
  $\scal{\ud\theta^\nu_{10}(\lo)}{\dl_2}
  \scal{\ud\theta^\nu_{10}(\lo)}{\dl_1} \bsi\big( \vH_\nu \, , \,
  \vG_{10}\big)(\lo)> 0$, a contradiction.
\end{proof}

We can now complete the proof of the local invertibility of
$\pi\cH_{\hat\tau}$.  Let us first consider the generic case when
$\ud\,(\tau_1 - \tau_2)(\lo)$ is not identically zero on
$T_\lo\Lambda$.

We need to express the boundaries between the adjacent sectors $M^0$,
$M^{\nu j}$.
\begin{itemize}
\item The boundary between $M^0$ and $M^{11}$ is given by
  \[ \{\dl \in T_\lo\Lambda \colon 0= \scal{\ud\tau_1(\lo)}{\dl}\leq
  \scal{\ud\tau_2(\lo)}{\dl} \}; \]
\item The boundary between $M^0$ and $M^{21}$ is given by
  \[ \{ \dl \in T_\lo\Lambda \colon 0= \scal{\ud\tau_2(\lo)}{\dl}\leq
  \scal{\ud\tau_1(\lo)}{\dl} \}; \]
\item The boundary between $M^{11}$ and $M^{12}$ is given by
  \[ \{\dl \in T_\lo\Lambda \colon
  \scal{\ud\theta^1_{10}(\lo)}{\dl}=0, \
  \scal{\ud\tau_1(\lo)}{\dl}\leq \scal{\ud\tau_2(\lo)}{\dl}\} ; \]
\item The boundary between $M^{21}$ and $M^{22}$ is given by
  \[ \{\dl \in T_\lo\Lambda \colon \scal{\ud\theta^2_{1 0}(\lo)}{\dl}
  =0, \scal{\ud\tau_2(\lo)}{\dl}\leq \scal{\ud\tau_1(\lo)}{\dl} \}; \]
\item The boundary between $M^{12}$ and $M^{22}$ is given by
  \[\{\dl \in T_\lo\Lambda \colon \scal{\ud\tau_2(\lo)}{\dl} =
  \scal{\ud\tau_1(\lo)}{\dl} \leq 0 \}; \]
\end{itemize}
According to Theorem \ref{thm:topo}, in order to prove the
invertibility of our map it is sufficient to prove that both the map
and its linearization are continuous in a neighborhood of $\wh\ell_0$
and of $0$ respectively, that they maintain the orientation and that
there exists a point $\dyb$ whose preimage is a singleton that belongs
to at most two of the above defined sectors.

Notice that the continuity of $\pi\cH_{\hat\tau}$ follows from the
very definition of the maximized flow.  Discontinuities of
$(\pi\cH_{\hat\tau})_*$ may occur only at the boundaries described
above. A direct computation in formulas \eqref{eq:lintau} shows that
this is not the case.  Let us now prove the last assertion.

For ``symmetry'' reasons it is convenient to look for $\dyb$ among
those which belong to the image of the set $\{\dl\in T_\lo\Lambda:0<
\scal{\ud\tau_1(\lo)}{\dl}= \scal{\ud\tau_2(\lo)}{\dl}\}$. Observe
that $ \scal{\ud\tau_1(\lo)}{\dl}= \scal{\ud\tau_2(\lo)}{\dl}$ also
implies $ \scal{\ud\theta^\nu_{10}(\lo)}{\dl} =
\scal{\ud\tau_\nu(\lo)}{\dl}$, $\nu=1,2$, see formulas
\eqref{eq:tautheta}.

Let $\dlb\in T_\lo\Lambda$ such that $0 < \scal{\ud\tau_1(\lo)}{\dlb}
= \scal{\ud\tau_2(\lo)}{\dlb}$ and let $\dyb := L^0 \dlb$.

Clearly $\dyb$ has at most one preimage per each of the above
polyhedral cones. Let us prove that actually its preimage is the
singleton $\{\dlb\}$.

In fact we show that for $\nu, j = 1,2$, there is no $\dl \in M^{\nu
  j}$ such that $L^{\nu j}(\dl) = \dyb$.

{\em 1. } Fix $\nu \in \{1, 2\}$ and assume, by contradiction, that
there exists $\dl \in M^{1 \nu}$ such that $L^{\nu 1}\dl = \dyb$. The
contradiction is shown exactly as in the proof of Claim 1 in Lemma
\ref{lem:equiori}.

{\em 2. }  Fix $\nu \in \{1, 2\}$ and assume, by contradiction, that
there exists $\dl \in M^{\nu 2}$ such that $L^{\nu 2}\dl = \dyb$ that
is: let $\dxb:= \pi_*\dlb$, and $\dx := \pi_*\dl$. Taking the
pull-back along the reference flow at time $\wh\tau$, and recalling
formula \eqref{eq:linphi0jind} we assume by contradiction that
\begin{multline*}
  \dxb - \sum_{s=1}^{J_0} \scal{\ud\theta_{0 s}(\lo)}{\dlb} (g_{0s} -
  g_{0, s-1})(\wh x_0) = \dx - \sum_{s=1}^{J_0} \scal{\ud\theta_{0
      s}(\lo)}{\dl}
  (g_{0,s} - g_{0, s-1})(\wh x_0) - \\
  - \scal{\ud\tau_1(\lo)}{\dl}(h_\nu - g_{0J_0})(\wh x_0) -
  \scal{\ud\theta^\nu_{10}(\lo)}{\dl}(g_{10} - h_\nu)(\wh x_0).
\end{multline*}
or, equivalently,
\begin{multline*}
  \dxb -\dx
  +\sum_{s=1}^{J_0-1}\scal{\ud\,(\theta_{0,s+1}-\theta_{0s})(\lo)}
  { \dlb - \dl})g_{0s}(\wh x_0) - \\
  -\Big(\scal{\ud\theta_{0J_0}(\lo)}{\dlb - \dl} +
  \scal{\ud\tau_1(\lo)}{\dl}\Big)g_{0J_0}(\wh x_0) - \\
  -\scal{\ud\,(\theta^\nu_{10}-\tau_\nu)(\lo)}{\dl} h_1(\wh x_0) +
  \scal{\ud\theta_{10}(\lo)}{\dl}g_{10}(\wh x_0) = 0.
\end{multline*}
Let $\de := (\dxb - \dx,a,b)$, where,
\[
\begin{split}
  & a_{0s} :=
  \begin{cases}
    \scal{\ud\,(\theta_{0,s+1}-\theta_{0s})(\lo)}{\dlb - \dl}
    \quad & s = 0, \ldots, J_0 -1 ,\\
    \scal{\ud\theta_{0J_0}(\lo)}{\dlb - \dl} -
    \scal{\ud\tau_1(\lo)}{\dl} \quad & s = J_0 ,
  \end{cases} \\
  & b := -\scal{\ud\,(\theta^\nu_{10}-\tau_\nu)(\lo)}{\dl}  , \\
  & a_{1s} :=
  \begin{cases}
    \scal{\ud\theta^\nu_{10}(\lo)}{\dl}  \quad & s = 0,\\
    a_{1s} = 0 \quad & s =1, \ldots, J_1 .
  \end{cases}
\end{split}
\]
Then $\de \in V_{10} \cap V_{0J_0}^{\perp_{J\se_\nu}}$ and Lemma
\ref{lem:v10perp} applies:
\begin{align*}
  0 < & J\se_\nu[\de]^2 = -b \, \dueforma{\dlb - \dl + \sum_{s=0}^{J_0} a_{0s} \vG_{0s}(\lo)}{(\vH_\nu - \vG_{0J_0})(\lo)} - \\
  & - a_{10}\, \dueforma{ \dlb - \dl
    +\sum_{s=0}^{J_0}a_{0s}\vG_{0s}(\lo)+b\vH_\nu(\lo)}
  {(\vG_{10}-\vH_1)(\lo)} = \\
  = & \scal{\ud\,(\theta^\nu_{10}-\tau_\nu)(\lo)}{\dl} \left(
    \scal{\ud\tau_\nu(\lo)}{\dlb - \dl} - \scal{\ud\tau_\nu(\lo)}{\dl}
  \right) \dueforma{\vG_{0J_0}}{\vH_\nu}(\lo) - \\ %
  & - \scal{\ud\theta^\nu_{10}(\lo)}{\dl} \Big( \big( -
  \scal{\ud\theta^\nu_{10}(\lo)}{\dlb - \dl} -
  \scal{\ud\theta^\nu_{10}(\lo)}{\dl}\big)
  \dueforma{\vH_\nu}{\vG_{10}}(\lo) + \\
  & + \scal{\ud\tau_\nu(\lo)}{\dlb}\dueforma{\vG_{0J_0}}{\vH_{3 - \nu}}(\lo) \Big) = \\
  = & \scal{\ud\,(\theta^\nu_{10}-\tau_\nu)(\lo)}{\dl}
  \scal{\ud\tau_\nu(\lo)}{\dlb} \dueforma{\vG_{0J_0}}{\vH_\nu}(\lo) - \\
  & - \scal{\ud\theta^\nu_{10}(\lo)}{\dl} \Big(
  \scal{\ud\theta^\nu_{10}(\lo)}{\dlb}
  \dueforma{\vH_\nu}{\vG_{10}}(\lo) + \\
  & + \scal{\ud\tau_\nu(\lo)}{\dlb} \dueforma{\vG_{0J_0}}{\vH_{3 -
      \nu}}(\lo) \Big)
\end{align*}
which is a contradiction, since all the addenda are negative.
 
By Theorem \ref{thm:topo} this proves the invertibility of
$\pi\cH_{\hat\tau}$, hence $\psi$ is one-to-one in a neighborhood of
$[0, \hat\theta_{10} - \ep] \times \big\{\lo\big\}$.


Assume now that the non generic case $T_{\lo}\Lambda \subset \ker
\ud(\tau_1 - \tau_2)(\lo)$ holds.
We are going to prove the Lipschitz invertibility of
$\left.\pi\cH_{\hat\tau}\right\vert_\Lambda$ by means of Clarke's
inverse functions theorem, see \cite{Cla83}.  The generalized Jacobian
$\partial (\pi\cH_{\hat\tau})(\lo)$ (in the sense of Clarke) of
$\pi\cH_{\hat\tau} \colon \Lambda \to M$ at $\lo$ is the closed convex
hull of the linear maps $L^0$, $L^{\nu j}$, $\nu, j =1, 2$ defined in
\eqref{eq:lintau}.

We distinguish between two sub-cases: \\
{\em 1. } $ \scal{\ud\tau_1(\lo)}{\dl} =  \scal{\ud\tau_2(\lo)}{\dl} = 0 \text{ for any }\dl \in T_\lo \Lambda$ \\
In this case we also have $\ud\theta^1_{10}(\lo)\vert_{T_\lo \Lambda}
\equiv \ud\theta^2_{10}(\lo)\vert_{T_\lo \Lambda} \equiv 0$, see
formulas \eqref{eq:tautheta}, hence all the linear maps $L^0$, $L^{\nu
  j}$, $\nu, j =1, 2$ defined in \eqref{eq:lintau} coincide with the
map $L^0$, so that $\pi\cH_{\hat\tau}$ is differentiable at $\lo$. The
invertibility of
$L^0$ and Clarke's invertibility theorem yield the claim. \\
{\em 2. } $ \scal{\ud\tau_1(\lo)}{\dl} = \scal{\ud\tau_2(\lo)}{\dl}
\text{ for any }\dl \in T_\lo\Lambda$ but
$\ker(\ud\tau_1(\lo)\vert_{T_\lo \Lambda}) \neq T_\lo \Lambda$.  In
this case we also have $\ud\theta^1_{10}(\lo)\vert_{T_\lo \Lambda}
\equiv \ud\theta^2_{10}(\lo)\vert_{T_\lo \Lambda} \equiv
\ud\tau_1(\lo)\vert_{T_\lo \Lambda}$ (see formulas
\eqref{eq:tautheta}) so that $L^{12} \equiv L^{22}$.

Let $\{v_1, v_2, \ldots, v_n \}$ be a basis of $T_{\wh x_0}M$ such
that $\scal{\ud\tau_1(\lo)}{\ud\a_* v_1} = 1$ and
$\scal{\ud\tau_1(\lo)}{\ud\a_* v_i} = 0$ for $i=2, \ldots, n$.  We
will show that $\partial(\pi\cH_{\hat\tau})(\lo)$ is made up of
invertible matrices by showing that
\[
(L^0)^{-1}\big(t_0 L^0 + t_1 L^{11} + t_2 L^{21} + t_3 L^{12} + t_4
L^{22} \big)\circ\ud\a_*
\]
is invertible for any $t_0, \ldots, t_4 \geq 0$ such that
$\sum_{i=0}^4 t_i = 1$.

Let $c^\nu_i$, $\nu=1, 2$, $i=1, \ldots, n$ such that
\[
(h_\nu - g_{0J_0})(\wh x_0) = \sum_{i=1}^n c_i^\nu v_i.
\]
We have
\begin{equation*}
  (L^0)^{-1} L^{\nu j} \ud\a_* v_i = v_i \quad 
  i=2, \ldots, n \text { and } \nu, \, j = 1, 2  
\end{equation*}
and, for each $\nu=1, 2$:
\begin{equation*}
  \begin{split}
    (L^0)^{-1}L^{\nu 1} \ud\a_* v_1 & = v_1 - (h_\nu - g_{0J_0})(\wh
    x_0) =
    (1 - c^\nu_1)v_1 - \sum_{k=2}^n c^\nu_k v_k \\
    (L^0)^{-1} L^{\nu 2} \ud\a_* v_1 & = v_1 - (h_\nu - g_{0J_0})(\wh
    x_0) -
    (g_{10} - h_\nu )(\wh x_0) = \\
    & = (1 - c^1_1 - c^2_1)v_1 - \sum_{k=2}^n (c^1_k + c^2_k) v_k.
  \end{split}
\end{equation*}
Thus the determinant of $ (L^0)^{-1}\big(t_0 L^0 + t_1 L^{11} + t_2
L^{21} + t_3 L^{12} + t_4 L^{22} \big)\circ\ud\a_*$ is given by $t_0 +
t_1 \det (L^0)^{-1}L^{1 1} \ud\a_* + t_2 \det (L^0)^{-1}L^{2 1}\ud\a_*
+ (t_3 + t_4) \det (L^0)^{-1}L^{12} \ud\a_*$ which cannot be null
since all the addenda are positive as it follows from Lemmata
\ref{lem:equiori} and \ref{lem:lemalg}.  This concludes the proof of
the invertibility of $\pi\cH_{\hat\tau}$. Let us now turn to
$\pi\cH_{\hat\theta_{1j}}$, $j=1, \ldots, J_1$.

 
 

\medskip

For any $j=1, \ldots , J_1$, there are four regions in $\Lambda$,
characterized by the following properties
\begin{align*}
  &\{\ell\in \Lambda \colon \theta_{1j}(\ell) \geq \hat\theta_{1j}
  \text{ and }
  \theta_{0, J_0+1}(\ell) = \tau_1(\ell)\},\\
  &\{\ell\in \Lambda \colon \theta_{1j}(\ell) \geq
  \hat\theta_{1j}\text{ and }
  \theta_{0, J_0+1}(\ell) = \tau_2(\ell)\},\\
  &\{\ell\in \Lambda \colon \theta_{1j}(\ell) < \hat\theta_{1j} \text{
    and }
  \theta_{0, J_0+1}(\ell) = \tau_1(\ell)\}, \\
  &\{\ell\in \Lambda \colon \theta_{1j}(\ell) < \hat\theta_{1j}\text{
    and } \theta_{0, J_0+1}(\ell) = \tau_2(\ell)\}.
\end{align*}
As for $\pi\cH_{\hat\tau}$, $\pi\cH_{\hat\theta_{1j}}$ turns out to be
a Lipschitz continuous, piecewise $C^1$ application. Its invertibility
can be proved applying again Theorem \ref{thm:topo}. Let us write the
piecewise linearized map $(\pi\cH_{\hat\theta_{1j}})_*$
\begin{itemize}
\item Let $N_{1j}^{10} := \{ \dl \in T_{\lo}\Lambda \colon
  \scal{\ud\tau_1(\lo)}{\dl}\leq \scal{\ud\tau_2(\lo)}{\dl}, \
  \scal{\ud\theta^1_{1j}(\lo)}{\dl} \geq 0 \} $. Then
  \[
  (\pi\cH_{\hat\theta_{1j}})_*\dl = A_{1j}^{1}\dl :=
  \exp(\hat\theta_{1j}k_{1, j-1\, *})\pi_*\varphi^1_{1, j-1 \, *}(\dl)
  \]
\item Let $N_{1j}^{20} := \{\dl \in T_{\lo}\Lambda \colon
  \scal{\ud\tau_2(\lo)}{\dl}\leq \scal{\ud\tau_1(\lo)}{\dl}, \
  \scal{\ud\theta^2_{1j}(\lo)}{\dl} \geq 0 \} $. Then
  \[
  (\pi\cH_{\hat\theta_{1j}})_*\dl = A_{1j}^{2}\dl :=
  \exp(\hat\theta_{1j} k_{1, j-1\, *})\pi_*\varphi^2_{1, j-1 \,
    *}(\dl)
  \]
\item Let $N_{1j}^{11} := \{ \dl \in T_{\lo}\Lambda \colon
  \scal{\ud\tau_1(\lo)}{\dl}\leq \scal{\ud\tau_2(\lo)}{\dl}, \
  \scal{\ud\theta^1_{1j}(\lo}{\dl} \leq 0 \} $. Then
  \begin{multline*}
    (\pi\cH_{\hat\theta_{1j}})_*\dl = B_{1j}^{1}\dl :=
    \exp(\hat\theta_{1j} k_{1, j-1\, *}) \pi_*\varphi^1_{1, j-1 \,
      *}(\dl) - \\
    - \scal{\ud\theta^1_{1j}(\lo)}{\dl} (k_{1j} - k_{1, j-1})(\wh
    x_{1j})
  \end{multline*}
\item Let $N_{1j}^{21} := \{ \dl \in T_{\lo}\Lambda \colon
  \scal{\ud\tau_2(\lo)}{\dl}\leq \scal{\ud\tau_1(\lo)}{\dl}, \
  \scal{\ud\theta^2_{1j}(\lo)}{\dl}\leq 0 \} $. Then
  \begin{multline*}
    (\pi\cH_{\hat\theta_{1j}})_*\dl = B_{1j}^{2}\dl :=
    \exp(\hat\theta_{1j}\vK_{1, j-1\, *}) \pi_* \varphi^2_{1, j-1 \,
      *}(\dl) - \\
    - \scal{\ud\theta^2_{1j}(\lo)}{\dl}(k_{1j} - k_{1, j-1})(\wh
    x_{1j})
  \end{multline*}
\end{itemize}
Analogously to what we did at time $\wh\tau$, let us first consider
the non degenerate case $\scal{\ud\, (\tau_1 - \tau_2)(\lo)}{\dl} \neq
0$ for some $\dl \in T_{\lo}\Lambda$: according to Theorem
\ref{thm:topo}, we only have to prove that both the map and its
piecewise linearization are continuous in a neighborhood of
$\wh\ell_0$ and of $0$ respectively, that the linearized pieces are
orientation preserving and that there exists a point $\dyb$ whose
preimage is a singleton.

The only nontrivial part is the last statement which can be proved by
picking $\dyb \in A_{1j}^1(N_{1j}^{10})\cap A_{1j}^2(N_{1j}^{20})$:
let $\dlb\in T_\lo\Lambda$ such that $ \scal{\ud\tau_1(\lo)}{\dlb} =
\scal{\ud\tau_2(\lo)}{\dlb} > 0$ and let $\dyb := A_{1j}^1 \dlb =
A_{1j}^2\dlb$.

Let $\nu \in \{1, 2\}$ and assume, by contradiction, that there exists
$\dl_\nu \in N_{1j}^{\nu1}$ such that $B^\nu_{1j}\dl_1 = \dyb$, i.e.
\begin{multline*}
  \exp(\hat\theta_{1j}k_{1, j-1\, *})\pi_*\varphi^\nu_{1, j-1 \, *}(\dlb) = \\
  = \exp(\hat\theta_{1j} k_{1, j-1\, *}) \pi_*\varphi^\nu_{1, j-1 \,
    *}(\dlb ) - \scal{\ud\theta^\nu_{1j}(\lo)}{\dlb} ( k_{1j} - k_{1,
    j-1})(\wh x_{1j}).
\end{multline*}
Taking the pull-back along the reference flow $\wh
S_{\hat\theta_{1j}}$ and defining $\dxb := \pi_*\dlb $, $\dx_\nu :=
\pi_*\dl_\nu $ we can equivalently write
\begin{multline*}
  \dx - \dxb - \sum_{s=1}^{J_0} \scal{\ud\theta_{0s}(\lo)}{\dl - \dlb}
  (g_{0s} - g_{0, s-1})(\wh x_0)
  - \\
  - \scal{\ud\tau_\nu(\lo)}{\dl - \dlb } (h_\nu - g_{0J_0})(\wh x_0)
  - \scal{\ud\theta^\nu_{10}(\lo)}{\dl - \dlb}(g_{10} - h_\nu)(\wh x_0) - \\
  - \sum_{s=1}^{j-1}\scal{\ud\theta^\nu_{1s}(\lo)}{\dl - \dlb}(g_{1s}
  - g_{1, s-1})(\wh x_0) - \scal{\ud\theta^\nu_{1j}(\lo)}{\lo} (g_{1j}
  - g_{1, j-1})(\wh x_0) = 0
\end{multline*}
that is
\begin{multline*}
  \dxb - \dx + \sum_{s=0}^{J_0 - 1}\scal{\ud\, (\theta_{0,s + 1} -
    \theta_{0s}(\lo)}{\dlb - \dl} g_{0s}(\wh x_0) + \\
  + \scal{\ud\, (\tau_\nu - \theta_{0J_0})(\lo)}{\dlb - \dl} g_{0J_0}(\wh x_0)  + \\
  + \scal{\ud \, ( \theta^\nu_{10} - \tau_\nu)(\lo)}{\dlb - \dl }
  h_\nu(\wh x_0) + \sum_{s=0}^{j-2} \scal{\ud\, ( \theta^\nu_{1, s+1}
    - \theta^\nu_{1s})(\lo)}{\dlb - \dl}
  g_{1s} (\wh x_0) +  \\
  + \left( \scal{\ud\theta^\nu_{1j}(\lo)}{\dl} -
    \scal{\ud\theta^\nu_{1, j-1}(\lo)}{\dlb - \dl} \right)g_{1,
    j-1}(\wh x_0) - \scal{\ud\theta^\nu_{1j}(\lo)}{\dl} g_{1j}(\wh
  x_0) = 0
\end{multline*}
Let $\de := (\dxb - \dx,a,b)$, where,
\[
\begin{split}
  & a_{0s} :=
  \begin{cases}
    \scal{\ud\,(\theta_{0,s+1}-\theta_{0s})(\lo)}{\dlb - \dl} \quad & s = 0, \ldots, J_0 -1 ,\\
    \scal{ \ud\, (\tau_\nu - \theta_{0J_0})(\lo)}{\dlb - \dl} \quad &
    s = J_0 ,
  \end{cases} \\
  & b := \scal{\ud\,(\theta^\nu_{10}-\tau_\nu)(\lo)}{\dlb - \dl} \\
  & a_{1s} :=
  \begin{cases}
    \scal{ \ud\, ( \theta^\nu_{1, s+1} - \theta^\nu_{1s}(\lo)}{\dlb -
      \dl}
    \quad & s = 0, \ldots, j-2 \\
    \scal{\ud\theta^\nu_{1j}(\lo)}{\dl} -  \scal{\ud\theta^\nu_{1, j-1}(\lo)}{\dlb - \dl}\quad &  s = j - 1 ,\\
    -   \scal{\ud\theta^\nu_{1j}(\lo)}{\dl} \quad &  s = j  ,\\
    0 & s= j+1 , \ldots, J_1.
  \end{cases}
\end{split}
\]
Then $\de \in V_{1j} \cap V_{1, j-1}^{\perp_{J\se_\nu}}$ and Lemma
\ref{lem:v1jperp} applies:
\begin{align*}
  0 > & a_{1j} \, {\bsi}\Big( \ud\a_* (\dx - \dxb )
  + \sum_{s=0}^{J_0}a_{0s}\vG_{0s}(\lo) + \\
  & \qquad\qquad + b\vH_\nu(\lo) + \sum_{s=0}^{j-1}a_{1s}\vG_{1s}(\lo)
  \, , \, (\vG_{1j}-\vG_{1, j-1})(\lo)
  \Big) = \displaybreak[0]\\
  = & \scal{\ud\theta^\nu_{1j}(\lo)}{\dl}\Big\{
  \scal{\ud\theta^\nu_{1j}(\lo)}{\dl - \dlb}
  \dueforma{\vG_{1, j-1}}{\vG_{1j}}(\lo) - \\
  & - \scal{\ud\theta^\nu_{1j}(\lo)}{\dl} \dueforma{\vG_{1,
      j-1}}{\vG_{1j}}(\lo)
  \Big\} = \displaybreak[0]\\
  = & - \scal{\ud\theta^\nu_{1j}(\lo)}{\dl}
  \scal{\ud\theta^\nu_{1j}(\lo)}{\dlb} \dueforma{\vG_{1,
      j-1}}{\vG_{1j}}(\lo),
\end{align*}
a contradiction.
  
Let us now turn to the degenerate case
$\ud\tau_1\vert_{T_{\lo}\Lambda} \equiv
\ud\tau_2\vert_{T_{\lo}\Lambda}$.  From equations
\eqref{eq:lintheta1jind} one can recursively show that $
\scal{\ud\theta^1_{1j}(\lo)}{\dl} \vert_{T_{\lo}\Lambda} =
\scal{\ud\theta^2_{1j}(\lo)}{\dl}\vert_{T_{\lo}\Lambda}$ for any $\dl
\in T_{\lo}\Lambda$ and for any $j=1, \ldots, J_1$, so that $A^1_{1j}
= A^2_{1j}$ and $B^1_{1j} = B^2_{1j}$ and the result can be proved
repeating the proof of Lemma \ref{lem:maxflow0j}.

This proves the invertibility of $\pi\cH_{\hat\theta_{1j}}$, $j=1,
\ldots, J_1$. Thus the map
\[
\id\times \pi\cH\colon [0, T] \times \Lambda \to M
\]
is one-to-one from a neighborhood of $[0, T] \times
\{\wh\lambda(0)\}$ in $ [0, T] \times \Lambda$ and we can apply the
procedure described in Section \ref{sec:hamimeth}.

\subsection{Proof of Theorem \ref{thm:main}}
\label{sec:proof}
Let
\[
\id\times \pi\cH\colon [0, T] \times \cO \to \cV = [0, T] \times \cU
\]
be one-to-one and let $\xi \colon [0, T] \to M$ be an admissible
trajectory whose graph is in $\cV$.

Applying the Hamiltonian methods explained in Section
\ref{sec:hamimeth} we have:
\[
C(\xi, u) - C(\wh\xi, \wh u) \geq \cF(\xi(T)) - \cF(\wh x_f).
\]
Thus, to complete the proof of Theorem \ref{thm:main} it suffices to
show that $\cF$ has a local minimum at $\wh x_f$.  In order to shorten
the notation, let us denote $\psi_T(\ell) := (\pi\cH_T)^{-1}(\ell)$.
\begin{theorem}
  $F$ has a strict local minimum at $\wh x_f$.
\end{theorem}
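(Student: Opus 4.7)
The plan is to exhibit $\hat x_f$ as a strict critical point of $\cF|_{N_f}$ via a first-order argument, and then to establish strict positivity of a piecewise second-order expansion, necessitated by the fact that $\psi_T$ is only Lipschitz piecewise $C^1$ at $\lo$. First I would use the identity $\ud(\chi_T\circ(\pi\cH_T)^{-1})=\cH_T\circ(\pi\cH_T)^{-1}$, valid on the neighbourhood where $\id\times\pi\cH$ was shown invertible in Section \ref{sec:invert}. Evaluated at $\hat x_f=\pi\cH_T(\lo)$ it gives $\ud(\chi_T\circ\psi_T)(\hat x_f)=\wh\lambda(T)$. Combined with the transversality condition $\wh\lambda(T)|_{T_{\hat x_f}N_f}=-p_0\,\ud c_f(\hat x_f)=-\ud\beta(\hat x_f)|_{T_{\hat x_f}N_f}$, this yields $\ud\cF(\hat x_f)|_{T_{\hat x_f}N_f}=0$, so that $\hat x_f$ is a critical point of $\cF|_{N_f}$.

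Next I would pull back small variations to the initial point. Given $x\in N_f$ close to $\hat x_f$, let $\ell=\psi_T(x)\in\Lambda$, and write $\ell=\ud\alpha(\pi\ell)$, so $\dx:=\pi\ell-\hat x_0\in T_{\hat x_0}M$ parametrises the displacement. Near $\lo$, $\Lambda$ splits into the two pieces $\{\tau_1\le\tau_2\}$ and $\{\tau_2\le\tau_1\}$; on each piece the linearisation of $\pi\cH_T$ evaluated on $\ud\alpha_*\dx$ coincides with $L(\dx,a(\dx),b(\dx))$, where the coefficients $a_{ij}(\dx)$, $b(\dx)$ are precisely the scalars $\scal{\ud(\theta_{0,s+1}-\theta_{0s})(\lo)}{\ud\alpha_*\dx}$, $\scal{\ud(\theta_{10}-\theta_{0,J_0+1})(\lo)}{\ud\alpha_*\dx}$, and $\scal{\ud(\theta_{1,s+1}-\theta_{1s})(\lo)}{\ud\alpha_*\dx}$ appearing in Lemma \ref{lem:Njperp}. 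Hence for any $\delta x_f\in T_{\hat x_f}N_f$ the preimage under the piecewise linearisation is a uniquely determined $\de(\delta x_f)\in\cN\cap V^{\perp_{J\se_\nu}}$, with $\nu\in\{1,2\}$ selected by the piece containing $x$. A second-order Taylor expansion of $\chi_T\circ\psi_T+\beta$ on each piece, after the penalty producing $\ud^2\wh\gamma(\hat x_0)$ has been absorbed as in Section \ref{sec:lambda}, gives
\[
\cF(x)-\cF(\hat x_f)=J\se_\nu\bigl[\de(\delta x_f)\bigr]^{2}+o(\|x-\hat x_f\|^{2}).
\]

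I would then close the argument by coercivity: since by hypothesis both $J\se_1$ and $J\se_2$ are positive definite on $\cN$ (Remark \ref{rem:2var}), there exists $c>0$ with $J\se_\nu[\de(\delta x_f)]^{2}\ge c\|\delta x_f\|^{2}$ in each half-space, and the two regional estimates agree on the common boundary $\{\ep_1=\ep_2\}$ because $J\se_1=J\se_2$ there. This produces a uniform bound $\cF(x)-\cF(\hat x_f)\ge \tfrac{c}{2}\|x-\hat x_f\|^{2}$ for $x\in N_f\cap\cO(\hat x_f)$ sufficiently close to $\hat x_f$, hence the strict local minimum.

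The main obstacle I anticipate is precisely the absence of a classical Hessian for $\cF$ at $\hat x_f$: one cannot simply invoke positive definiteness of a single bilinear form, since the second-order expansion of $\cF$ genuinely splits along the image of $\{\tau_1=\tau_2\}$. The delicate point is to verify, piece by piece, that the characterisation Lemmas \ref{lem:v0jperp}--\ref{lem:Njperp} really identify the pulled-back variation with an element of $\cN\cap V^{\perp_{J\se_\nu}}$, so that the correct quadratic form is indeed coercive along it, and that the two half-space expansions match smoothly on the boundary instead of producing a spurious concave corner that would destroy the lower bound. This is the structural reason why the hypothesis of Theorem \ref{thm:main} demands the positivity of both $J\se_1$ and $J\se_2$ rather than of a single generalised Hessian.
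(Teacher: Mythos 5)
Your outline reproduces the paper's strategy — verify $\ud\cF(\wh x_f)=0$ via PMP and transversality, then show that the piecewise Hessian of $\cF$ is controlled by $J\se_\nu$ restricted to $\cN\cap V^{\perp_{J\se_\nu}}$, and close by coercivity — but the central identification step is asserted, not proven, and you yourself flag it as ``the delicate point.'' That step is the substance of the theorem. You claim a second-order Taylor expansion gives $\cF(x)-\cF(\wh x_f)=J\se_\nu[\de(\delta x_f)]^{2}+o(\|x-\wh x_f\|^{2})$, but you do not show how the coefficients of $J\se_\nu$ (which live at $\lo$ and involve the pulled-back fields $g_{ij}$, $h_\nu$) reassemble into a quadratic form at $\wh x_f$; without that, the coercivity of $J\se_\nu$ cannot be transferred to $\cF$.

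The paper closes this gap with a symplectic computation that your proposal does not reproduce and for which you offer no substitute. Starting from $\ud(\alpha\circ\pi\psi_T)=\cH_T\circ\psi_T$, the Lagrangian structure of $\cH_T(\Lambda)$ yields the clean formula $\uD^2\cF(\wh x_f)[\dx_f]^2=\dueforma{(\cH_T\circ\psi_T)_*\dx_f}{\ud\,(-\beta)_*\dx_f}$, i.e.\ the piecewise Hessian is a symplectic pairing at $\lf$. Lemma \ref{lem:Njperp} furnishes the other half: on $\cN\cap V^{\perp_{J\se_\nu}}$, $J\se_\nu[\de]^2$ is itself a symplectic pairing, but at $\lo$. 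The bridge is to apply the flow $\wh\cH_{T*}$ to both slots of the latter pairing: since Hamiltonian flows preserve $\dueforma{\cdot}{\cdot}$, this transports the inequality from $\lo$ to $\lf$ and produces exactly the expression in $\uD^2\cF$. Until you carry out this transport (or a direct Taylor computation achieving the same end), the inequality $\uD^2\cF(\wh x_f)>0$ on $T_{\wh x_f}N_f$ remains a conjecture in your argument, so the proposal as written is an outline of the correct proof rather than a proof.
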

\begin{proof}
  It suffices to prove that
  \begin{equation}
    \ud\, \cF (\wh x_f) = 0 \text{ and } 
    \uD^2  \cF (\wh x_f) > 0 \, . \label{eq:D2F2}
  \end{equation}
  The first equality in \eqref{eq:D2F2} is an immediate consequence of
  the definition
  of $\cF$ and of PMP. Let us prove that also the inequality  holds. \\
  Since $\ud \left( \a\circ\pi \psi_T \right) = \cH_T\circ\psi_T$, we
  also have
  \begin{align}
    \ud \, \cF & = \cH_T\circ\psi_T + \ud\beta \\
    \begin{split}
      \uD^2 \cF(\wh x_f) [\dx_f]^2 & = \left( (\cH_T\circ\psi_T)_*
        +\uD^2 \beta \right)(\wh x_f) [\dx_f]^2  \\
      & = \dueforma{(\cH_T\circ\psi_T)_* \dx_f}{\ud \, (-\beta)_*
        \dx_f}\label{eq:deriv}
    \end{split}
  \end{align}
  From Lemma \ref{lem:Njperp} we have
  \begin{multline}\label{eq:finale1}
    \bsi \Big(\ud\,(-\wh\beta)_* \big( \dx +
    \dsum_{i=0}^1\dsum_{s=0}^{J_i}
    a_{is} g_{is}(\wh x_0) + b h_\nu (\wh x_0) \big) \, , \\
    \ud\alpha_* \dx + \dsum_{i=0}^1 \dsum_{s=0}^{J_i} a_{is}
    \vG_{is}(\lo) + b \vH_{\nu}(\lo) \Big) < 0.
  \end{multline}
  Applying $\wh\cH_{T*}$ to both arguments and using the
  anti-simmetry property of $\sigma$ we get
  \[
  \dueforma{\cH_{T*}\ud\alpha_*\dx}
  {\ud\,(-\beta)_*((\pi\cH_T)_*\ud\alpha_*\dx)} > 0
  \]
  which is exactly \eqref{eq:deriv} with $\dx :=
  \pi_*\psi_{T*}\dx_f$.
\end{proof}

To conclude the proof of Theorem \ref{thm:main} we have to show that
$\wh\xi$ is a strict minimizer.  Assume $C(\xi, u) = C(\wh\xi, \wh
u)$.  Since $\wh x_f$ is a strict minimizer for $F$, then $\xi(T) =
\wh x_f$ and equality must hold in \eqref{eq:integ}:
\begin{equation*}
  \scal{\cH_s(\psi^{-1}_s(\xi(s)))}{\dot\xi(s)}
  =  H_s(\cH_s(\psi^{-1}_s(\xi(s)))).
\end{equation*}
By regularity assumption, $u(s) = \wh u(s)$ for any $s$ at least in a
left neighborhood of $T$, hence $\xi(s) = \wh\xi(s)$ and $
\psi^{-1}_s(\xi(s)) = \lo$ for any $s$ in such neighborhood. $u$ takes
the value $\wh u_{\vert (\hat\theta_{1J_1}, T)}$ until $\cH_s
\psi^{-1}_s(\xi(s)) = \cH_s(\lo) =\wh\lambda(s)$ hits the
hyper-surface $K_{1,J_1} = K_{1, J_1 - 1}$, which happens at time $s =
\hat\theta_{1,J_1}$. At such time, again by regularity assumption, $u$
must switch to $\wh u\vert_{(\hat\theta_{1, J_1-1},\hat\theta_{1, J_1}
  )}$, so that $\xi(s) = \wh\xi(s)$ also for $s$ in a left
neighborhood of $\hat\theta_{1, J_1}$.  Proceeding backward in time,
with an induction argument we finally get $(\xi(s), u(s)) =
(\wh\xi(s), \wh u(s))$ for any $s \in [0, T]$.

In the abnormal case the cost is zero, thus the existence of a strict
local minimiser implies that the trajectory is isolated among
admissible ones.

\section{Appendix: Invertibility of piecewise $C^1$ maps}

This Section is devoted to piecewise linear maps and to piecewise
$C^1$ maps. Our aim is to prove a sufficient condition, in terms of the
``piecewise linearization'', of piecewise $C^1$ maps. 

Some  linear algebra preliminaries are needed. 
\begin{lemma}\label{lem:lemalg}
  Let $A$ and $B$ be linear automorphisms of $\R^n$. Assume that for
  some $v\in(\R^n)^* \setminus \{0 \}$, $A$ and $B$ coincide on the
  space $\pi(v) := \{x\in\R^n \colon \scal{v}{x} = 0 \}$.  Then, the map
  $\mathcal{L}_{AB}$ defined by $x\mapsto Ax$ if $ \scal{v}{x} \geq
  0$, and by $x\mapsto Bx$ if $ \scal{v}{x} \leq 0$, is a
  homeomorphism if and only if $\det(A)\cdot\det(B)>0$.
\end{lemma}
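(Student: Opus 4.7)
The plan is to reduce the statement to a simple geometric dichotomy: how $A$ and $B$ permute the two open half-spaces determined by the hyperplane they share, and to identify this permutation with the sign of $\det(A)\cdot\det(B)$.

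First I would fix notation. Set $\pi(v)=\ker v$ and $W:=A(\pi(v))=B(\pi(v))$; since $A,B$ are automorphisms agreeing on $\pi(v)$, $W$ is a hyperplane, and we let $W^{+}$, $W^{-}$ denote the two open half-spaces of $\R^{n}\setminus W$. Denote by $H^{+}=\{x:\langle v,x\rangle>0\}$ and $H^{-}=\{x:\langle v,x\rangle<0\}$. Because $A$ is an isomorphism, $\{A(H^{+}),A(H^{-})\}=\{W^{+},W^{-}\}$, and similarly for $B$. WLOG fix the labels so that $A(H^{+})=W^{+}$; then there are exactly two cases:
\begin{itemize}
\item[(a)] $B(H^{+})=W^{+}$, $B(H^{-})=W^{-}$;
\item[(b)] $B(H^{+})=W^{-}$, $B(H^{-})=W^{+}$.
\end{itemize}

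Next I would identify which case corresponds to $\det(A)\det(B)>0$. Pick any basis $e_{1},\ldots,e_{n-1}$ of $\pi(v)$ and any $e_{n}$ with $\langle v,e_{n}\rangle>0$, and compute $\det(A)$ and $\det(B)$ in this basis. Since $Ae_{i}=Be_{i}$ for $i\le n-1$, expanding along the last column shows that $\det(A)$ and $\det(B)$ are each a nonzero scalar multiple, by the \emph{same} constant, of the signed distance from $W$ of $Ae_{n}$ and $Be_{n}$ respectively. Hence $\det(A)\det(B)>0$ precisely when $Ae_{n}$ and $Be_{n}$ lie on the same side of $W$, i.e.\ in case (a).

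It then remains to check that case (a) is equivalent to $\mathcal{L}_{AB}$ being a homeomorphism. In case (a), $\mathcal{L}_{AB}$ maps $\overline{H^{+}}$ bijectively onto $\overline{W^{+}}$ via $A$ and $\overline{H^{-}}$ bijectively onto $\overline{W^{-}}$ via $B$; these half-spaces intersect precisely in $W$, where $A^{-1}|_{W}=B^{-1}|_{W}$ because $A|_{\pi(v)}=B|_{\pi(v)}$. Thus $\mathcal{L}_{AB}$ is a continuous bijection whose inverse is piecewise linear with matching pieces on $W$, hence continuous. In case (b), instead, $\mathcal{L}_{AB}(\R^{n})=A(\overline{H^{+}})\cup B(\overline{H^{-}})=\overline{W^{+}}\cup\overline{W^{+}}=\overline{W^{+}}\ne\R^{n}$, so $\mathcal{L}_{AB}$ is not even surjective. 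Combining this dichotomy with the computation of the previous paragraph yields the claim.

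There is no real obstacle here; the only step requiring a little care is the determinant computation, and it is immediate once one chooses the basis adapted to the decomposition $\R^{n}=\pi(v)\oplus\mathbb{R}e_{n}$.
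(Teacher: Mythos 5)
Your proof is correct, and it takes a genuinely different, more geometric route than the paper's. The paper works in a basis $\{w_1,\dots,w_{n-1},v\}$ adapted to $\pi(v)$, writes the matrix of $A^{-1}B$ explicitly, identifies the sign of its lower-right entry $\gamma_n$ with the sign of $\det A\cdot\det B$, and then argues non-injectivity (for $\gamma_n<0$), injectivity and surjectivity (for $\gamma_n>0$) by separate, somewhat computational contradiction arguments. You instead observe at the outset that, since $A$ and $B$ are isomorphisms agreeing on $\pi(v)$, both send the ordered pair of open half-spaces $(H^+,H^-)$ to the pair $(W^+,W^-)$ of half-spaces of the common image hyperplane $W=A(\pi(v))=B(\pi(v))$, and do so either in the same order or with a swap; a single determinant computation---writing $\det A=\phi(Ae_n)$ and $\det B=\phi(Be_n)$ with $\phi(\cdot)=\det(Ae_1,\dots,Ae_{n-1},\cdot)$---shows that ``same order'' is exactly $\det A\cdot\det B>0$. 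The homeomorphism statement is then immediate: in the aligned case $\mathcal{L}_{AB}$ glues two linear bijections $\overline{H^{\pm}}\to\overline{W^{\pm}}$ whose inverses match on $W$, hence is a bijection with continuous (piecewise-linear) inverse by the pasting lemma; in the swapped case the image is $\overline{W^+}\ne\R^n$, so it is not even surjective. Your argument avoids the three separate case analyses of the paper and makes the ``orientation'' heuristic precise in one step; the paper's version, on the other hand, delivers the explicit $\gamma_i$ coefficients, which it does not use further here but which keep the proof entirely in the spirit of the matrix manipulations used throughout that section.
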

\begin{proof}
Let $w_1, \ldots, w_{n-1}$ be a basis of the hyperplane $\pi(v)$. We
complete it with $v$ to obtain a basis of $\R^n$.
The matrix of $A^{-1}B$ in this basis is given by
\[
\left(
\begin{array}[c]{c | c}
  \boldsymbol{I}_{n-1} &
  \begin{matrix}
    \gamma_1 \\
\vdots \\
\gamma_{n-1} 
  \end{matrix} \\[7mm]
\hline \rule{0pt}{4mm} \boldsymbol{0}^t_{n-1} & \gamma_n
\end{array}\right)
\]
where $ \boldsymbol{I}_{n-1}$ is the $n-1$ unit matrix and
$\boldsymbol{0}_{n-1}$ is the $n-1$ null vector and the $\gamma_i$'s 
are defined by
\[
A^{-1}Bv = \dsum_{i=1}^{n-1}\gamma_i w_i + \gamma_n v.
\]
Thus $\gamma_n $ is positive if and only if $\det(A) \det(B)$ is
positive and $\gamma_n$ is zero if and only either $A$ or $B$ is not
invertible.

Observe that if $\gamma_n$ is negative, then 
\[
\cL_{AB}v =
\cL_{AB}
\left( \dsum_{i=1}^{n-1}-\dfrac{\gamma_i}{\gamma_n}\, w_i +
\frac{1}{\gamma_n} \, v
\right).
\] 
Thus, in this case $\cL_{AB}$ is not one--to--one.
 
We now prove that $\cL_{AB}$ is injective if
$\gamma_n$ is positive. Assume this is not true. Since both $A$ and
$B$ are invertible, there exist $z_A, z_B \in\R^n$  such
that $\scal{v}{z_A} > 0$, $\scal{v}{z_B} < 0$ and $A z_A = B z_B$ or,
equivalently,  $A^{-1}B z_B = z_A$. Let
\begin{equation*}
   z_A = \sum_{i=1}^{n-1}c^i_A w_i + c_A v, \qquad  z_B =
   \sum_{i=1}^{n-1}c^i_B w_i + c_B v .
\end{equation*}
Clearly $c_A >0$, $c_B < 0$. The equality  $A^{-1}B z_B = z_A$ is
equivalent to 
\[
\sum_{i=1}^{n-1}c^i_B  w_i + c_B \sum_{i=1}^{n-1}\gamma_i  w_i + c_B
\gamma_n v =  \sum_{i=1}^{n-1}c^i_A  w_i + c_A v. 
\]
Consider the scalar product with $v$, we get
$
c_B \gamma_n \norm{v}^2 = c_A\norm{v}^2
$,
which is a contradiction.

We finally prove that, if $\gamma_n$ is positive, then $\cL_{AB}$ is
surjective.
Let $z \in \R^n$. There exist $y_A$, $y_B \in \R^n$ such that  
$A y_A = B y_B = z$. If either $\scal{v}{y_A} \geq 0$ or
$\scal{v}{y_B} \leq 0$, there is nothing to prove.
Let us assume $\scal{v}{y_A} < 0$ and $\scal{v}{y_B} >  0$.
In this case $A^{-1}B y_B = y_A$ and proceeding as above we get a
contradiction.
\end{proof}

\begin{definition}
  Let $G:\R^n\to\R^n$ be a continuous, piecewise linear map at $0$, in
  the sense that $G$ is continuous and there exists a decomposition
  $S_1,\ldots,S_k$ of $\R^n$ in closed polyhedral cones (intersection
  of half spaces, hence convex) with nonempty interior and common
  vertex in the origin and such that $\partial S_i \cap \partial S_j =
  S_i \cap S_j$, $i \neq j$, and linear maps $L_1,\ldots,L_k$ with
  \[
  G(x) = L_i x, \qquad x \in S_i,
  \]
  with $L_i x = L_j x$ for any $x \in S_i \cap S_j$, and $\det L_i\neq
  0$, $\forall i=1,\ldots,k$.
\end{definition}
  
\begin{example}
As an example of continuous piecewise linear map consider $G\colon \R^2\to \R^2$
given by
 
  \begin{gather*}
    L_1 =
    \begin{pmatrix}
      1 & 0 \\
      0 & 1
    \end{pmatrix}
   \quad L_2 =
    \begin{pmatrix}
      1 & -\sqrt{2} \\
      0 & \sqrt{2} - 1
    \end{pmatrix}
   \quad L_3 =
    \begin{pmatrix}
      -\sqrt{2} & -\sqrt{2}  + 1 \\
      1 & 0
    \end{pmatrix} \\[5mm]
    L_4=
    \begin{pmatrix}
      0 & 1 \\
      -\sqrt{2} + 1 & - \sqrt{2}
    \end{pmatrix}
   \qquad L_5 =
    \begin{pmatrix}
      \sqrt{2} -1 & 0 \\ 
      -\sqrt{2} & 1
    \end{pmatrix}
    \end{gather*}

 where the $L_i$'s are applied in the corresponding cone $S_i$
 illustrated in picture \ref{fig:esempio}

\definecolor{grigio0}{gray}{0.9}
\definecolor{grigio1}{gray}{0.8}
\definecolor{grigio2}{gray}{0.7}
\definecolor{grigio3}{gray}{0.6}
\definecolor{grigio4}{gray}{0.5}

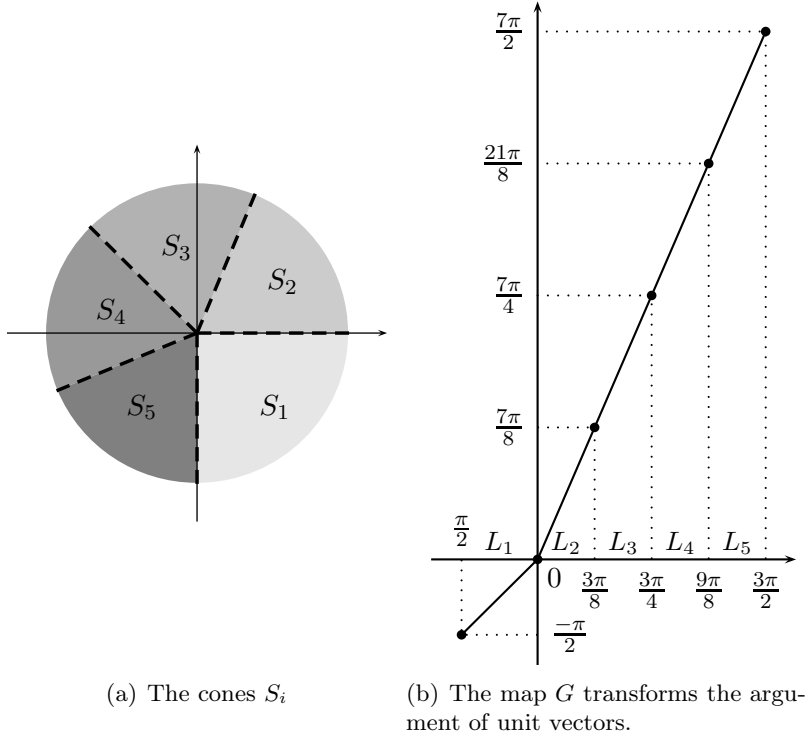
\begin{figure}[ht!]
  \centering
\subfigure[The cones $S_i$]{
    \begin{pspicture}(-2.5, -4.4)(2.5,4.4) 
      \pswedge[linestyle=none,fillstyle=solid, fillcolor=grigio0](0,0){2}{-90}{0}
      \pswedge[linestyle=none,fillstyle=solid, fillcolor=grigio1](0,0){2}{0}{67.5}
      \pswedge[linestyle=none,fillstyle=solid, fillcolor=grigio2](0,0){2}{67.5}{135}
      \pswedge[linestyle=none,fillstyle=solid, fillcolor=grigio3](0,0){2}{135}{202.5}
      \pswedge[linestyle=none,fillstyle=solid, fillcolor=grigio4](0,0){2}{202.5}{270} 
     \psline[linewidth=0.5pt]{->}(-2.5, 0)(2.5, 0) %
     \psline[linewidth=0.5pt]{->}(0,-2.5)(0,2.5)
\SpecialCoor
\rput(0,0){
\psline[linestyle=dashed,linewidth=1.3pt](0,0)(2;67.5)
\psline[linestyle=dashed,linewidth=1.3pt](0,0)(2;135)
\psline[linestyle=dashed,linewidth=1.3pt](0,0)(2;202.5)
\psline[linestyle=dashed,linewidth=1.3pt](0,0)(2;270)
\psline[linestyle=dashed,linewidth=1.3pt](0,0)(2;0)
       \uput[dr](0.7,-0.7){$S_1$}
       \uput[ur](0.8,0.4){$S_2$}
       \uput[u](-0.27,0.8){$S_3$}
       \uput[ul](-0.8,0){$S_4$}
       \uput[dl](-0.4,-0.7){$S_5$}
 }
    \end{pspicture}
}
\subfigure[The map $G$ transforms the argument of unit vectors.]{
  \begin{pspicture}(-1.6, -1.4)(3.4, 7.4)
    \psline{->}(-1.4, 0)(3.4, 0)
    \psline{->}(0, -1.4)(0, 7.4)
    \psline[showpoints=true]{-}(-1, -1)(0,0)(.75, 1.75)(1.5, 3.5)(2.25, 5.25)(3, 7)
    \uput[u](-1,0){$\frac{\pi}{2}$}
    \uput[dr](0,0){$0$}
    \uput[u](-0.5,-0.1){\small$L_1$}
    \uput[u](.375,-0.1){\small$L_2$}
    \uput[d](.75,0){$\frac{3\pi}{8}$}
    \uput[u](1.125,-0.1){\small$L_3$}
    \uput[u](1.875,-0.1){\small$L_4$}
    \uput[u](2.625,-0.1){\small$L_5$}
    \uput[d](1.5,0){$\frac{3\pi}{4}$}
    \uput[d](2.25,0){$\frac{9\pi}{8}$}
    \uput[d](3,0){$\frac{3\pi}{2}$}
    \uput[r](0, -1){$\frac{-\pi}{2}$}
    \uput[l](0, 1.75){$\frac{7\pi}{8}$}
    \uput[l](0, 3.5){$\frac{7\pi}{4}$}
    \uput[l](0, 5.25){$\frac{21\pi}{8}$}
    \uput[l](0, 7){$\frac{7\pi}{2}$}
    \psline[linestyle=dotted]{-}(-1, 0)(-1, -1)(0, -1)
    \psline[linestyle=dotted]{-}(.75, 0)(.75, 1.75)(0, 1.75)
    \psline[linestyle=dotted]{-}(1.5, 0)(1.5, 3.5)(0, 3.5)
    \psline[linestyle=dotted]{-}(2.25, 0)(2.25, 5.25)(0, 5.25)
    \psline[linestyle=dotted]{-}(3, 0)(3, 7)(0, 7)
\end{pspicture}
}
  \caption{Polyhedral cones and the transformation of unit vectors
    under $G$}
  \label{fig:esempio}
\end{figure}
\end{example}

Observe that any continuous piecewise linear map $G$ is differentiable in
$\R^n\setminus\cup_{i=1}^k\partial S_i$.  It is easily shown that $G$
is proper, and therefore $\deg(G,\R^n,p)$ is well-defined for any
$p\in\R^n$ (the construction in \cite{Mil65}, Chapter 5 is still valid if the
assumption on the compactness of the manifolds is replaced with the
assumption that $G$ is proper. Compare also \cite{BFPS03}).  Moreover $\deg(G,\R^n,p)$ is constant
with respect to $p$. So we shall denote it by $\deg(G)$.

We shall also assume that $\det L_i>0$ for any $i=1,\ldots,k$.

  \begin{lemma}\label{thm.1}
    If $G$ is as above, then $\deg (G)>0$.  In particular, if there
    exists $q \neq 0$ such that its preimage $G^{-1}(q)$ is a
    singleton that belongs to at most two of the convex polyhedral
    cones $S_i$, then $\deg (G)=1$. 
  \end{lemma}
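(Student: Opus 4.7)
The plan is to compute $\deg(G)$ by counting preimages of regular values with signs. Because $\det L_i>0$ for every $i$, each preimage of a regular value contributes $+1$, so $\deg(G)=|G^{-1}(q)|$ for any regular $q\in\R^n$; the non-regular values lie in the finite union of images of the boundary faces $\partial S_i\cap\partial S_j$, a set of measure zero, so regular values are dense and the integer $\deg(G)$ is independent of the chosen regular value. Moreover $G$ is proper: invertibility of each $L_i$ yields $|G(x)|\ge c|x|$ with $c=\min_i \|L_i^{-1}\|^{-1}>0$, so preimages of bounded sets are bounded.

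For the positivity assertion, pick any $x_1\in\mathrm{int}(S_1)$; in a small neighborhood $U\subset\mathrm{int}(S_1)$ of $x_1$ the map $G$ coincides with $L_1$, so $L_1(U)$ is open and contains a regular value $q_1$, whose preimage in $U$ is $L_1^{-1}(q_1)$. Hence $|G^{-1}(q_1)|\ge 1$ and $\deg(G)\ge 1$.

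For the equality assertion, let $q\ne 0$ satisfy $G^{-1}(q)=\{x_0\}$ with $x_0$ in at most two cones. I first claim that for any open $V\ni x_0$ there exists a neighborhood $W$ of $q$ such that $G^{-1}(W)\subset V$. Otherwise one could find sequences $q_n\to q$ and $x_n\in G^{-1}(q_n)\setminus V$; properness of $G$ would force $(x_n)$ to be bounded, and an accumulation point $x_\infty\notin V$ would satisfy $G(x_\infty)=q$, contradicting $G^{-1}(q)=\{x_0\}$. Now choose regular values $q_n\to q$ and shrink $V$ so that either $V\subset\mathrm{int}(S_i)$ when $x_0\in\mathrm{int}(S_i)$, or $V\subset S_i\cup S_j$ with $V\cap S_i$ and $V\cap S_j$ separated by a single hyperplane through $x_0$ when $x_0$ lies in exactly two cones $S_i,S_j$ (the latter is possible because the ``at most two cones'' hypothesis forces the local shared face at $x_0$ to be top-dimensional). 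In the first case $G|_V=L_i$ is a homeomorphism and $|G^{-1}(q_n)|=1$. In the second, $G|_V$ is precisely the map $\mathcal{L}_{L_i L_j}$ of Lemma \ref{lem:lemalg}, and $\det L_i\cdot\det L_j>0$ ensures it is a homeomorphism, so again $|G^{-1}(q_n)|=1$. In either case $\deg(G)=1$.

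The main technical step is the confinement argument in the third paragraph, which couples properness with the singleton hypothesis to prevent preimages of points near $q$ from clustering outside a chosen neighborhood of $x_0$; once confinement is in place, the local structure near $x_0$ reduces either to one linear piece or to two pieces glued along a hyperplane, and Lemma \ref{lem:lemalg} supplies the local homeomorphism needed to close the count.
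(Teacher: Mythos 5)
Your argument is correct and reaches the conclusion by the same overall strategy as the paper: compute $\deg(G)$ as a signed count of preimages of regular values (each contributing $+1$ since $\det L_i>0$), and handle the case where the singleton preimage $x_0$ sits on a common face of exactly two cones by invoking Lemma~\ref{lem:lemalg}. The technical execution of the singular case differs, though. The paper first treats $q\notin\bigcup_i G(\partial S_i)$ directly via $\deg(G)=\#G^{-1}(q)$, and for the remaining case uses the \emph{excision} and \emph{boundary-dependence} properties of degree: it localizes to a small neighborhood $V$ of $p=x_0$ inside $\operatorname{int}(S_i\cup S_j)$, observes $G|_{\partial V}=\mathcal{L}_{L_iL_j}|_{\partial V}$, and transfers the degree, $\deg(G,V,q)=\deg(\mathcal{L}_{L_iL_j},V,q)=1$. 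You instead establish a \emph{confinement lemma} via properness (preimages of points near $q$ must stay inside any prescribed neighborhood $V$ of $x_0$), approximate $q$ by regular values $q_n$, and count $\#G^{-1}(q_n)=1$ directly from local injectivity of $G|_V$, which equals $L_i|_V$ or $\mathcal{L}_{L_iL_j}|_V$. Both routes are sound; the paper's is slightly more compact because it leans on standard degree-theoretic machinery, while yours is more hands-on and avoids boundary dependence of the degree entirely, at the cost of needing the geometric observation (which you do supply) that the ``at most two cones'' hypothesis forces the local separating face to be a hyperplane, so that $G$ near $x_0$ really is a two-piece map of the form handled by Lemma~\ref{lem:lemalg}.
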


  \begin{proof}
    Let us assume in addition that $q\notin\cup_{i=1}^kG\big(\partial
    S_i\big)$.  Observe that the set $\cup_{i=1}^kG\big(\partial
    S_i\big)$ is nowhere dense hence $A:=G(S_1)\setminus\cup_{i=1}^k
    G\big(\partial S_i \big)$ is non-empty.
    
    Take $x\in A$ and observe that if $y\in G^{-1}(x)$ then $y \notin
    \cup_{i=1}^k \partial S_i$. Thus
    \begin{equation}\label{eq.1}
      \deg(G)=\sum_{y\in G^{-1}(x)}\mathrm{sign}\,\det \ud G(y)
      =\# G^{-1}(x).
    \end{equation}
    Since $G^{-1}(x)\neq\emptyset$, $\deg(G)>0$.  The second part of
    the assertion follows taking $x=q$ in \eqref{eq.1}.

    Let us now remove the additional assumption. Let $\{p\}=G^{-1}(q)$
    be such that $p\in\partial S_i\cap\partial S_j$ for some $i\neq
    j$.  Observe that by assumption $p\neq 0$ does not belong to any
    cone $\partial S_s$ for $s\notin\{ i,j\}$. Thus one can find a
    neighborhood $V$ of $p$, with $V\subset \inter (S_i\cup S_j
    \setminus\{0\})$. By the excision property of the topological
    degree $\deg(G)=\deg(G,V,p)$. Let $\mathcal{L}_{L_iL_j}$ be a map
    as in Lemma \ref{lem:lemalg}.  Observe that, the assumption on the
    signs of the determinants of $L_i$ and $L_j$ imply that
    $\mathcal{L}_{L_iL_j}$ is orientation preserving.  Also notice
    that $\mathcal{L}_{L_iL_j}|_{\partial V}=G|_{\partial V}$. The
    multiplicativity, excision and boundary dependence properties of
    the degree yield
    $1=\deg(\mathcal{L}_{L_iL_j})=\deg(\mathcal{L}_{L_iL_j},V,p)=\deg(G,V,p)$.
    Thus, $\deg(G)=1$, as claimed.
%
  \end{proof}

  \subsection{Piecewise differentiable functions}
  $\qquad$


\begin{lemma}\label{lem:lemclin}
  Let $A$ and $B$ be linear endomorphisms of $\R^n$. Assume that for
  some $v\in\R^n \setminus \{0 \}$, $A$ and $B$ coincide on the space
  $\{x\in\R^n:\langle x,v\rangle =0\}$. Then
  \[
  \det\big(tA+(1-t)B\big)=t\det A+(1-t)\det B \quad \forall t \in \R.
  \]
\end{lemma}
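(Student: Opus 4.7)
The plan is to exploit the multilinearity of the determinant as a function of the columns of a matrix, after choosing a suitable basis of $\R^n$. The hypothesis says that $A$ and $B$ agree on the hyperplane $\pi(v) := \{x \in \R^n : \langle x, v\rangle = 0\}$, which is $(n-1)$-dimensional. So I would start by picking any basis $w_1, \ldots, w_{n-1}$ of $\pi(v)$ and completing it to a basis $w_1, \ldots, w_{n-1}, w_n$ of $\R^n$ by any vector $w_n \notin \pi(v)$ (e.g.\ $v$ itself).

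Next, I would write the matrices $M_A$ and $M_B$ of $A$ and $B$ in this basis. Since $Aw_i = Bw_i$ for $i = 1, \ldots, n-1$, the first $n-1$ columns of $M_A$ and $M_B$ coincide; call them $c_1, \ldots, c_{n-1}$. Only the last columns, say $a$ and $b$, are possibly different. Then for any $t \in \R$,
\[
t M_A + (1-t) M_B = \bigl[\, c_1 \,\big|\, \cdots \,\big|\, c_{n-1} \,\big|\, t a + (1-t) b \,\bigr].
\]

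Finally, by the multilinearity of the determinant in the columns (applied only to the last column, since the others are fixed),
\[
\det\bigl(tM_A + (1-t)M_B\bigr)
= t \det[c_1 | \cdots | c_{n-1} | a] + (1-t)\det[c_1 | \cdots | c_{n-1} | b]
= t \det M_A + (1-t)\det M_B,
\]
which is the claim since the determinant is independent of the basis. There is no real obstacle here; the only thing to be careful about is the choice of the basis to make $A$ and $B$ differ in exactly one column, which is precisely what the codimension-one agreement hypothesis provides.
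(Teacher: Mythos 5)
Your proof is correct and follows essentially the same route as the paper's: pick a basis in which $A$ and $B$ differ in only one column (the paper uses an orthonormal basis with $v$ as the first vector and expands the determinant along that column by cofactors; you use a general basis and invoke multilinearity directly, which is the same computation). The only cosmetic difference is that you dispense with the unnecessary orthonormality requirement.
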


\begin{proof}
  We can, without loss of generality, assume that $|v|=1$. We can
  choose vectors $w_2,\ldots,w_n\in\R^n\setminus\{0\}$ such that
  $v,w_2,\ldots,w_n$ is an orthonormal basis of $\R^n$. In this basis,
  for $t\in[0,1]$ we can represent the operator $tA+(1-t)B$ in the
  following matrix form:
  \[
  \begin{pmatrix}
    ta_{11}+(1-t)b_{11} & a_{12} &\ldots & a_{1n}\\
    \vdots & \vdots & & \vdots\\
    ta_{n1}+(1-t)b_{n1} & a_{n2} &\ldots & a_{nn}
  \end{pmatrix}
  =
  \begin{pmatrix}
    ta_{11}+(1-t)b_{11} & b_{12} &\ldots & b_{1n}\\
    \vdots & \vdots & & \vdots\\
    ta_{n1}+(1-t)b_{n1} & b_{n2} &\ldots & b_{nn}
  \end{pmatrix}
  \]
  Thus,
  \begin{align*}
    \det\big( tA+(1-t)B\big)
    =&\sum_{i=1}^n (-1)^{i+1}\big(ta_{i1}+(1-t)b_{i1}\big)\det A_{i1}\\
    =&\sum_{i=1}^n (-1)^{i+1}\big(ta_{i1}+(1-t)b_{i1}\big)\det B_{i1}
  \end{align*}
  where $A_{i1}$ and $B_{i1}$ represent the $(i1)$-th cofactor of $A$
  and $B$ respectively.  Clearly, $A_{i1}=B_{i1}$ for
  $i=1,\ldots,n$. Hence, we have
  \begin{multline*}
    \det\big( tA+(1-t)B\big)=\sum_{i=1}^n (-1)^{i+1}ta_{i1}\det A_{i1}+\\
    +\sum_{i=1}^n (-1)^{i+1}(1-t)b_{i1} \det B_{i1}= t\det A+(1-t)\det
    B
  \end{multline*}
  as claimed in the lemma.
\end{proof}

Lemmas \ref{lem:lemalg} and \ref{lem:lemclin} imply the following
fact:

\begin{lemma}\label{lem:mono}
  Let $A$ and $B$ be linear automorphisms of $\R^n$. Assume that for
  some $v\in\R^n \setminus \{0 \}$, $A$ and $B$ coincide on the space
  $\{x\in\R^n:\langle x,v\rangle =0\}$.  Assume that the map
  $\mathcal{L}_{AB}$ defined by $x\mapsto Ax$ if $\langle
  x,v\rangle\geq 0$, and by $x\mapsto Bx$ if $\langle x,v\rangle\leq
  0$, is a homeomorphism.  Then,
  $\det(A)\cdot\det\big(tA+(1-t)B\big)>0$ for any $t \in [0,1]$.
\end{lemma}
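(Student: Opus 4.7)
The proof is essentially an immediate consequence of Lemmas \ref{lem:lemalg} and \ref{lem:lemclin}, so the plan is very short. First, I would invoke Lemma \ref{lem:lemalg} on the standing hypothesis that $\mathcal{L}_{AB}$ is a homeomorphism to obtain $\det(A)\cdot\det(B)>0$; this already tells us that $\det A$ and $\det B$ are both nonzero and share the same sign.

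Next, I would apply Lemma \ref{lem:lemclin}, whose compatibility hypothesis (that $A$ and $B$ agree on the hyperplane $\{x\in\R^n:\langle x,v\rangle=0\}$) is exactly the one assumed in the current lemma, to obtain the linearity identity
\[
\det\bigl(tA+(1-t)B\bigr) \;=\; t\det A + (1-t)\det B
\]
for every $t\in\R$. Multiplying both sides by $\det A$ and specialising to $t\in[0,1]$ yields
\[
\det(A)\cdot\det\bigl(tA+(1-t)B\bigr) \;=\; t\,(\det A)^2 + (1-t)\,\det A\cdot\det B.
\]

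For $t\in[0,1]$ the coefficients $t$ and $1-t$ are nonnegative and not simultaneously zero, while $(\det A)^2>0$ and $\det A\cdot\det B>0$ by the first step. Hence the right-hand side is a nontrivial nonnegative combination of two strictly positive quantities, and is therefore strictly positive, which is precisely the claim. There is no substantive obstacle: the lemma is a clean corollary of the two preceding ones, and the only thing worth noting is that the hypotheses of Lemma \ref{lem:lemclin} match those of the current lemma verbatim, so no additional verification is required.
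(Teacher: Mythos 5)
Your proof is correct and is exactly the argument the paper intends: the paper itself gives no explicit proof, simply stating that the lemma follows from Lemmas \ref{lem:lemalg} and \ref{lem:lemclin}, which is precisely the combination you spell out.
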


Let $\sigma_1,\ldots,\sigma_r$ be a family of $C^1$-regular pairwise
transversal hyper-surfaces in $\R^n$ with $x_0\in\cap_{i=1}^r\sigma_i$
and let $U\subset\R^n$ be an open and bounded neighborhood of
$x_0$. Clearly, if $U$ is sufficiently small,
$U\setminus\cup_{i=1}^r\sigma_i$ is partitioned into a finite number
of open sets $U_1,\ldots,U_k$.

Let $f:\overline{U}\to\R^n$ be a continuous map such that there exist
$f_1,\ldots,f_k\in C^1(\overline U)$ with the property that
\begin{equation}
  f(x)=f_i(x), \qquad x\in \overline U_i,\label{eq:fpez}
\end{equation}
with $f_i(x) = f_j(x)$ for any $x \in \overline U_i \cap \overline
U_j$.  Notice that such a function is $PC^1(\overline U)$ (see
e.g.~\cite{KS94} for a definition), and Lipschitz continuous in $U$.

Let $S_1,\ldots,S_k$ be the tangent cones (in the sense of Boulingand)
at $x_0$ to the sets $U_1,\ldots, U_k$, (by the transversality
assumption on the hyper-surfaces $\sigma_i$ each $S_i$ is a convex
polyhedral cone with non empty interior) and assume $\ud f_i(x_0)x =
\ud f_j(x_0)x$ for any $x \in S_i \cap S_j$.  Define
\begin{equation}\label{eq:Flin}
  F(x)= \ud f_i(x_0)x \qquad x\in S_i.
\end{equation}
so that $F$ is a continuous piecewise linear map (compare
\cite{KS94}).

One can see that $f$ is Bouligand differentiable and that its
B-derivative is the map $F$ (compare \cite{KS94, PR96}). Let $y_0 :=
f(x_0)$.  There exists a continuous function $\varepsilon$, with
$\varepsilon(0)=0$, such that $f(x)= y_0 +F(x-x_0)+|x-x_0|\varepsilon
(x-x_0)$.
\begin{lemma}\label{lemma.2}
  Let $f$ and $F$ be as in \eqref{eq:fpez}-\eqref{eq:Flin}, and
  assume that $\det \ud f_i(x_0) > 0 $ for all $i=1,\ldots,k$. Then
  there exists $\rho>0$ such that
  $\deg\big(f,B(x_0,\rho),y_0\big)=\deg\big(F,B(0,\rho),0\big)$.  In
  particular, $\deg\big(f,B(x_0,\rho),y_0\big)=\deg(F)$.
\end{lemma}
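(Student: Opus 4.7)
The plan is to use a straight--line homotopy between the piecewise linear approximation $F$ (translated to $x_0$) and the shifted piecewise $C^1$ map $f - y_0$, and then apply homotopy invariance of the Brouwer degree, followed by excision.

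First, I would record the Bouligand expansion already stated in the paper, namely $f(x) = y_0 + F(x-x_0) + |x-x_0|\,\varepsilon(x-x_0)$, where $\varepsilon$ is continuous at $0$ with $\varepsilon(0)=0$. The essential preliminary is a uniform linear lower bound on $F$. Since each $\ud f_i(x_0)$ is a linear isomorphism (the determinant being positive, hence non--zero), for every $i$ one has $|\ud f_i(x_0)\,y| \ge c_i\,|y|$ with $c_i := \|(\ud f_i(x_0))^{-1}\|^{-1} > 0$; taking $c := \min_{i=1,\ldots,k} c_i > 0$, and recalling $F(y) = \ud f_i(x_0)\,y$ for $y \in S_i$, yields the global bound $|F(y)| \ge c\,|y|$.

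Next I would introduce the homotopy $H \colon \overline{B(x_0,\rho)} \times [0,1] \to \R^n$ defined by
\[
H(x,t) \;:=\; F(x-x_0) + t\,|x-x_0|\,\varepsilon(x-x_0),
\]
so that $H(\,\cdot\,,0) = F(\,\cdot - x_0)$ and $H(\,\cdot\,,1) = f - y_0$. For $x\in\partial B(x_0,\rho)$ and $t\in[0,1]$,
\[
|H(x,t)| \;\ge\; |F(x-x_0)| - |x-x_0|\,|\varepsilon(x-x_0)| \;\ge\; \rho\bigl(c - \sup_{|y|\le\rho}|\varepsilon(y)|\bigr).
\]
By continuity of $\varepsilon$ at $0$ with $\varepsilon(0)=0$, I can fix $\rho>0$ so small that $\sup_{|y|\le\rho}|\varepsilon(y)| < c/2$; then $|H(x,t)| \ge c\rho/2 > 0$ on $\partial B(x_0,\rho)\times[0,1]$, so the homotopy is admissible with respect to the value $0$.

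Homotopy invariance of the degree then gives $\deg(f-y_0,B(x_0,\rho),0) = \deg(F(\,\cdot - x_0),B(x_0,\rho),0)$, and translation invariance converts this into $\deg(f,B(x_0,\rho),y_0) = \deg(F,B(0,\rho),0)$. For the final equality with $\deg(F)$, I would observe that on each cone $S_i$ the equation $\ud f_i(x_0)\,y = 0$ forces $y=0$ (by invertibility), so $F^{-1}(0)=\{0\}\subset B(0,\rho)$; by excision, $\deg(F,B(0,\rho),0) = \deg(F,\R^n,0) = \deg(F)$.

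The only delicate point is the admissibility of the homotopy, which requires the uniform bound $|F(y)|\ge c|y|$; this is not really an obstacle since it follows from taking the minimum over the finitely many invertible linear pieces $\ud f_i(x_0)$, but it is the one place where the piecewise structure (rather than differentiability in a single chart) must be handled carefully.
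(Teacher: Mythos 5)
Your proof is correct and takes essentially the same approach as the paper: a straight-line homotopy $H(x,t) = F(x-x_0) + t\,|x-x_0|\,\varepsilon(x-x_0)$ made admissible by a uniform linear lower bound $|F(y)|\ge c|y|$ coming from the finitely many invertible linear pieces. Your formulation of the constant $c$ as $\min_i\|(\ud f_i(x_0))^{-1}\|^{-1}$ is in fact slightly cleaner than the paper's $\min_i\|\ud f_i\|$, and you spell out the final excision step for $\deg(F,B(0,\rho),0)=\deg(F)$ that the paper leaves implicit.
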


  \begin{proof}
    Consider the homotopy $ H(x,\lambda)=F(x-x_0)+\lambda
    \abs{x-x_0}\varepsilon(x-x_0)$, $\lambda\in [0,1] $ and observe
    that
    \[
    m := \inf\{\abs{F(v)} \colon \abs{v}=1 \}
    =\min_{i=1,\ldots,k}\|df_i\|>0.
    \]
    Thus,
    \[
    \abs{H(x,\lambda)} \geq \big( m - \abs{\varepsilon(x-x_0)}
    \big)\abs{x-x_0}.
    \]
    This shows that in a conveniently small ball centered at $x_0$,
    homotopy $H$ is admissible. The assertion follows from the
    homotopy invariance property of the degree.
  \end{proof}

  \begin{theorem} \label{thm:topo} Let $f$ and $F$ be as in
    \eqref{eq:fpez}-\eqref{eq:Flin} and assume $\det\ud f_i(x_0) >
    0$.  Assume also that there exists $p\in\R^n$ whose pre-image
    belongs to at most two of the convex polyhedral cones $S_i$ and
    such that $F^{-1}(p)$ is a singleton. Then $f$ is a Lipschitzian
    homeomorphism in a sufficiently small neighborhood of $x_0$.
  \end{theorem}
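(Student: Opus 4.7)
The plan is to combine Lemmas \ref{thm.1} and \ref{lemma.2} with a degree-counting argument to prove, first, local surjectivity of $f$ near $y_0$, and then local injectivity via the fact that each smooth piece $f_i$ is orientation-preserving.

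First, I would observe that by Lemma \ref{thm.1}, the hypothesis that $F^{-1}(p)$ is a singleton lying in at most two of the cones $S_i$ gives $\deg(F)=1$. Then, applying Lemma \ref{lemma.2}, I obtain $\rho>0$ such that $\deg\big(f,B(x_0,\rho),y_0\big)=1$. By the continuity property of the Brouwer degree with respect to the target point (valid as long as the target avoids $f(\partial B(x_0,\rho))$), there exists a neighborhood $V$ of $y_0$ such that $\deg\big(f,B(x_0,\rho),q\big)=1$ for every $q\in V$. In particular, $f\big(B(x_0,\rho)\big)\supset V$, so $f$ is locally surjective onto a neighborhood of $y_0$.

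For local injectivity, let $W:=V\setminus f\big(\cup_{i}\partial U_i\big)$, which is open and dense in $V$ since each $f_i$ is $C^1$ and the $\partial U_i$ are locally $C^1$ hyper-surfaces of lower dimension. For $q\in W$, every preimage $x\in f^{-1}(q)\cap B(x_0,\rho)$ lies in the interior of some $U_i$, where $f$ coincides with $f_i$, and after shrinking $\rho$ if necessary we may assume $\det \ud f_i(x)>0$ for all $x\in \overline{B(x_0,\rho)}$ and all $i$ (by continuity and the assumption at $x_0$). Hence each such preimage contributes $+1$ to the degree, so the formula
\[
1=\deg\big(f,B(x_0,\rho),q\big)=\sum_{x\in f^{-1}(q)\cap B(x_0,\rho)}\mathrm{sign}\,\det \ud f(x)=\#\big(f^{-1}(q)\cap B(x_0,\rho)\big)
\]
forces exactly one preimage. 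Injectivity on all of $V$, including non-regular targets, then follows by a limiting argument: if some $q_0\in V$ had two distinct preimages $x_1,x_2$, the implicit function theorem applied separately to each smooth piece would produce an open neighborhood of $q_0$ over which $f$ has at least two preimages, contradicting the uniqueness just established on the dense set $W$.

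Having shown that $f$ restricts to a bijection from $U:=f^{-1}(V)\cap B(x_0,\rho)$ onto $V$, the Lipschitz continuity of $f$ is automatic since $f\in PC^1(\overline{B(x_0,\rho)})$, and the Lipschitz continuity of $f^{-1}$ follows from the uniform bound $\min_i\inf_{x\in\overline{B(x_0,\rho)}}\|(\ud f_i(x))^{-1}\|^{-1}>0$ together with the piecewise-$C^1$ structure, exactly as in \cite{KS94}. The main technical obstacle is the injectivity argument at non-regular target values, where the preimage may lie on the singular set $\cup_i\partial U_i$; this is handled by the density argument above, which relies crucially on the uniform positivity of $\det \ud f_i$ near $x_0$ so that all smooth pieces remain local diffeomorphisms on a common ball.
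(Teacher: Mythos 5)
The paper's own proof is much shorter: after obtaining $\deg\big(f,B(x_0,\rho),y_0\big)=1$ from Lemmas \ref{thm.1} and \ref{lemma.2}, it simply invokes Theorem~4 of \cite{PR96}. You instead attempt to reprove local invertibility directly from the degree computation, which is a more self-contained route; the degree-one surjectivity argument and the identification of regular targets $q\in W$ are both correct and in the spirit of the paper's cited reference.

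However, there is a genuine gap in your injectivity step for non-regular targets $q_0$. You claim that applying the implicit function theorem to each smooth piece $f_i$ would produce an open neighborhood of $q_0$ over which $f$ has two preimages. This does not follow: if a preimage $x_1$ of $q_0$ lies on $\partial U_i$, then the local $f_i$-inverse produces points that may fall outside $\overline U_i$, where $f\neq f_i$, and hence are not preimages of $f$ at all; when several pieces meet at $x_1$ one also cannot simply glue the individual local inverses of the $f_i$'s. The fix, using only tools already in the paper, is a local degree argument at the offending preimages: after shrinking $\rho$ so that $\det\ud f_i>0$ on $\overline{B(x_0,\rho)}$ for all $i$, the Bouligand derivative of $f$ at each of the two distinct preimages $x_1,x_2$ is again a continuous piecewise linear map with all determinants positive; the first assertion of Lemma \ref{thm.1} (which needs no singleton-preimage hypothesis) gives it positive degree, and Lemma \ref{lemma.2} applied at $x_j$ yields $\deg\big(f,B(x_j,\rho_j),q_0\big)>0$ for suitably small disjoint balls. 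For every $q$ near $q_0$ this forces at least one preimage in each of the two balls, contradicting the uniqueness already established on the dense set $W$; this is the role that Theorem~4 of \cite{PR96} plays in the paper's proof.
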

  
  \begin{proof}
    From Lemmas \ref{thm.1}-\ref{lemma.2}, it follows that
    $\deg(f,B(x_0,\rho),y_0)=1$ for sufficiently small $\rho>0$.  By
    Theorem 4 in \cite{PR96}, we immediately obtain the assertion.
  \end{proof}

\end{document}